\theoremstyle{plain}
\newtheorem{thm}{Theorem}
\newtheorem{lem}{Lemma}
\newtheorem{definition}{Definition}
\newtheorem{conj}{Conjecture}
\newtheorem{defi}{Definition}
\newtheorem{prop}{Proposition}
\newtheorem{cor}{Corollary} 
\theoremstyle{remark}
\newtheorem{rem}{Remark}
\newtheorem{question}{Question}
\newtheorem{ex}{Example}
\numberwithin{equation}{section}
\def\ens{\ensuremath}                                     
\def\bydef{\ens{\stackrel{\text{def}}{=}}}
\newcommand\mtb[1]{\ens{\mathbb{#1}}}                     
\newcommand{\LS}{\ensuremath{\underset{n=1}{\overset{\infty}{\cap}} \, {\underset{i=n}{\overset{\infty}{\cup}}}\,}}
\newcommand{\boundsum}{\underset{i=1}{\overset{\infty}{\sum}}}
\newcommand{\N}{\mtb{N}}	   \newcommand{\Q}{\mtb{Q}}   	\newcommand{\R}{\mtb{R}}	
	   \newcommand{\Z}{\mtb{Z}}
\newcommand\mtc[1]{\ens{\mathcal{#1}}}           
\newcommand{\MM}{\mtc{P}}    \newcommand{\RR}{\mtc{R}}       \newcommand{\BB}{\mtc{B}}
\newcommand{\alp}{\ens{\alpha}}  
\newcommand{\del}{\ens{\delta}}  \newcommand{\eps}{\ens{\epsilon}}
	\newcommand{\lam}{\ens{\lambda}}
\newcommand\bld[1]{\ens{\boldsymbol{#1}}} 
\newcommand{\bfx}{\ens{\bld x}}    
\newcommand{\bfs}{\ens{\bld s}}
\newcommand\uv[2]{\scalebox{#1}{#2}} 
\newcommand\uvm[2]{\scalebox{#1}{\ens{#2}}} 
\newcommand{\card}{{\bf card}}                       
\newcommand{\hdim}{{ \uvm{.95}{\rm{dim_{\uv{.65}{\rm H}}}}}}   
\newcommand{\dist}{{\bf dist}}                       
\newcommand\rbm[2]{\raisebox{-#1 mm}{#2}}            
\newcommand{\intpart}[1]{\big[ #1 \big]}             
\newcommand{\fracpart}[1]{\|#1 \|}                   
\newcommand{\cf}[1] {\big[ #1 \big]_\downarrow}      
\newcommand\hl[1]{{\center \color{red} \ens{\bs\bs\bs} \\}} 
\newcommand\hsp[1]{\mbox{}\hspace{#1mm}} 
\newcommand\hspm[1]{\mbox{}\hspace{-#1mm}} 
\newcommand\vsp[1]{\par \vspace{#1mm}} 
\newcommand\vspm[1]{\par \vspace{-#1mm}} 
\newcommand{\np}{\newpage} 
\def\bs{{\bigstar}}                     
\def\ui{\ens{\mtc I}}                   
\def\rmq{\ens{\R\!\setminus\!\Q}}       
\def\leb{\ens{\lambda}}                 
\def\mms{\mbox{m.\,m.-space}}           
\def\mmps{\mbox{m.\,m.\,p.-system}}     
\newcommand{\ttimes}{\!\times\!}        
\newcommand{\supp}{\uv{.9}{\ens{\mathbf{supp}}}}  
\newcommand{\toto}{^{(2)}}              
\newcommand\mbc[1]{}                                                    
\title[Diophantine properties of IETs \today]
{Diophantine properties of IETs and general systems:\\ Quantitative proximality 
and connectivity}
\author[M. Boshernitzan]{Michael Boshernitzan}
\author[J. Chaika]{Jon Chaika}
\begin{document}
\
\maketitle
\vspm5
\begin{abstract}
We present shrinking targets results for general systems with the emphasis on applications
for IETs (interval exchange transformations)   $(\ui,T)$,  $\ui=[0,1)$. 
In particular, we prove that if an IET  $(\ui,T)$  is ergodic  
(relative to the Lebesgue measure $\lam$),  then the equality
\[
\liminf_{n\to\infty}\limits\, n\,|T^n(x)-y|=0 \tag{A1}
\]
holds for  $\lam\ttimes\lam$-a.\,a.   $(x,y)\in \ui^2$.
The ergodicity assumption is essential: the result does not extend to all 
minimal IETs. The factor\,  $n$\,  in (A1)  is optimal 
(e.\,g., it cannot be replaced by \ $n\,\ln(\ln(\ln n))$.

On the other hand, for Lebesgue almost all  $3$-IETs  $(\ui,T)$   we prove that 
for all   $\eps>0$
\[
\liminf_{n\to\infty}\limits\, n^\eps |T^n(x)-T^n(y)|=
\infty,\quad  
\text{for Lebesgue a.\,a. } (x,y)\in \ui^2. \tag{A2}
\]
This  should be contrasted with the equality
$
\liminf_{n\to\infty}\limits\,  |T^n(x)-T^n(y)|=0,
$
for a.\,a. $(x,y)\in \ui^2$,   which holds since  $(\ui^2, T\times T)$  is
ergodic   (because  generic  $3$-IETs   $(\ui,T)$  are weakly mixing).

We also prove that no  3-IET  is  strongly topologically mixing.
\end{abstract}



\tableofcontents

\section{\bf \large Introduction}
Let $(X,d)$  be a metric space with a Borel probability measure $\mu$ on it.
Let $\alp\geq0$ be a constant. Let $T\colon X\to X$ be a  Borel
measurable  map preserving measure $\mu$.
We study the distribution of the following functions (called the connectivity, 
proximality and recurrence gauges, respectively):

\begin{itemize}
\item $\phi_\alp(x,y)=\liminf_{n\to\infty}\limits\, n^{\alp} d(T^{n}(x),y), 
      \hsp{13.3} x,y\in X$;
\item $\psi_\alp(x,y)=\liminf_{n\to\infty}\limits\, n^{\alp} d(T^{n}(x),T^{n}(y)), 
      \hsp6 x,y\in X$;
\item $\rho_\alp(x)=\phi(x,x)=\liminf_{n\to\infty}\limits\, n^{\alp} d(T^{n}(x),x), 
      \hsp2 x\in X$.
\end{itemize}

We show that for $\alp>0$ the values taken by the first two gauges $\mu\ttimes\mu$-almost 
surely lie in the set $\{0,\infty\}$ (Theorem~\ref{thm:gsoinf}, Corollary~\ref{thm:gsoinf}); 
under some mild conditions all three gauges are almost surely constant 
(Section~\ref{sec:general}). 

If $X$ is a smooth manifold with a probability Borel measure $\mu$ on it and if 
$T: X\to X$ is Borel measurable, $\mu$-ergodic map which is differentiable (not 
necessarily continuously) $\mu$-almost everywhere then for any $\alp>0$ either 
$\phi_{\alp}(x,y)=0$ a.\,s., or\, $\phi_{\alp}(x,y)=\infty$ a.\,s..  (See 
Corollary~\ref{thm:connpower} in Section~\ref{sec:general} for a more general result; 
here ``a.\,s.'' stands for ``for $\mu\ttimes\mu$-a.\,a. pairs $(x,y)\in X^2$\,'').

The above general results are applied to IETs (interval exchange transformations) and more accurate results are obtained. In particular, if $T$ is an IET and $\mu$ is a 
$T$-ergodic measure then the result quoted in the previous paragraph implies only that 
each of the gauges $\phi_\alp(x,y)$ is a.\,s. a constant, either $0$ or $\infty$. 
It turns out (Theorem \ref{thm:phi1mu}) that for $\alp=1$ (in the setting of 
$\mu$-ergodic IETs) the constant is always $0$:
\[
   \phi_1(x,y)=\liminf_{n\to\infty}\limits\, n\,d(T^{n}(x),y)=
   \liminf_{n\to\infty}\limits\, n|T^n(x)-y|=0 \pmod{\mu\ttimes\mu}.
\]

The conditions of the above statement are proper. The measure $\mu$ cannot be replaced by 
Lebesgue measure, even for minimal IETs (Theorem \ref{thm:chaikaex}). Moreover, the 
``scale sequence'' \, $n$\, in the the statement of this result is optimal 
(Theorem~\ref{thm:noptim}); in particular, it cannot be replaced  by $n\,\log(\log n)$.
Related issues have been considered in \cite{D.Kim}, \cite{log iet}, \cite{AU} and 
\cite{kurz iet}. This is discussed in Section~\ref{sec:resiets}.

We also prove that for any $\alp>0$ the relation\, 
$\psi_{\alp}(x,y)=\infty$  holds 
for Lebesgue almost all $3$-IETs and for Lebesgue almost all pairs $(x,y)$ 
(Theorem \ref{thm:3psia}). One should mention that almost every 3-IET is weakly mixing 
\cite{KaS} and therefore\, 
$\psi_{0}(x,y)=0$\, holds for almost all pairs $(x,y)$. This is also discussed in 
Section~\ref{sec:resiets}.

Lastly, we show that no 3-IET is topologically mixing (Theorem \ref{thm:3nomix}).
(Note that the second author J.~Chaika has recently constructed a topologically 
mixing $4$-IET \cite{chaika3}).

Our motivation in studying the gauges $\phi$ and $\psi$  partially came from 
\cite{bosh3}  where the connection between the Hausdorff dimension of the invariant 
measure $\mu$ and the distribution of the recurrence gauge $\rho$ was established 
(see Proposition \ref{Hdim} for a related result).

In this paper we establish a variety of general results for the connectivity and 
proximality gauges (Sections \ref{sec:extreme} and especially \ref{sec:general}) in 
addition to strong results for IETs that are of independent interest and showcase the 
applications of these methods 
(Sections \ref{sec:resiets}, \ref{sec:prox} and \ref{sec:tau}).

\section{\bf \large Notation and Definitions}\label{sec:notation}                        \mbc{sec:notation}
Denote by  $\R$, $\Z$\,  the sets of real and integer numbers, respectively. 
Denote by $\ui$  the unit interval (circle) $\ui=[0,1)=\R\!\setminus\!\Z$
and by $\lam$ the Lebesgue measure on it.
\subsection{Systems and Invariant Measures}                                            
In what follows, $X,Y$ are separable metrizable spaces.                                   \mbc{1.1\\systems}

$\BB(X)$ stands for the  family ($\sigma$-algebra) of Borel subsets of $X$. 

$\MM(X)$ stands for the set of Borel probability      
        $\sigma$-additive measures on  $X$  (defined on $\BB(X)$). 
        
For a measure  $\mu\in\MM(X)$, we write $\supp(\mu)\subset X$  for the (minimal closed) 
support of $\mu$. If  $\supp(\mu)$  is a singleton $\{x\}\subset X$, the measure $\mu$  
is called atomic and is denoted by $\delta_x$. 

A map  $f\colon X\to Y$  is called measurable if it is  Borel measurable, i.\,e.\ 
if\, 
$
A\in\BB(Y)\Rightarrow f^{-1}(A)\in \BB(X).
$
Given such a  map, denote by  $f^*\!\colon\MM(X)\to\MM(Y)$ the (push forward) map          \mbc{eqs:fall}
defined by the formula
\begin{subequations}\label{eqs:fall}
\begin{equation}\label{eq:fstar}
(f^*(\mu))(B)=\mu(f^{-1}(B)) \hsp2 \text{ (for all }\ \mu\in\MM(X),\, B\in\BB(Y)),
\end{equation}
and denote by $\hat f(\mu)$  the support of the measure  $f^{*}(\mu)$:                     \mbc{eq:fstar\\eq:fhat}
\begin{equation}\label{eq:fhat}
\hat f(\mu)=\supp(f^*(\mu))\subset Y \hsp2 (\text{for }\mu\in\MM(X)).        
\end{equation}


\end{subequations}

If  $f^{*}(\mu)=\delta_y$ for some $y\in Y$ (i.\,e. if $f^{*}(\mu)$ is an atomic measure),
$f$ is called $\mu$-constant (or just {\em a.\,s.\ constant}\, if there is no doubt as to 
the measure implied). 

For two measurable maps  $f,g\colon X\to Y$ we write  $f=g\pmod{\mu}$ to signify 
the relation $f(x)=g(x)$  for $\mu$-a.\,a.\ $x\in X$.

By a metric system  $(X,T)=(X,d,T)$  we mean a metric space $X=(X,d)$ together 
with a measurable map\,  $T\colon X\to X$. Denote by 
\[
\MM(X,T)=\MM(T)=\{\mu\in\MM(X)\mid T^*(\mu)=\mu\}
\]
the set of $T$-invariant Borel probability measures on $X$.                              \mbc{1.2}
(Note that $\MM(X,T)=\emptyset$ is possible.)                                

The following abbreviations are used: \vspm1
\begin{itemize}
\item {\bf \mms} ({\em metric measure space} $(X,\mu)$) -- a metric space $X$
           (together) with a measure  $\mu\in\MM(X)$,
\item {\bf \mmps} ({\em metric measure preserving system} $(X,T,\mu)$) --
           a metric system  $(X,T)$ with an invariant measure  $\mu\in\MM(X,T)$.
\end{itemize} 

Given a metric space $(X,T)$, a measure $\mu\in\MM(T)$ is called $T$-ergodic if 
$\mu(S)\in\{0,1\}$ for every \mbox{$T$-invariant} measurable set $S\in\BB(X)$. 
An equivalent condition is that there is no presentation  $\mu=\frac{\mu_{1}+\mu_{2}}2$
with \mbox{$\mu_{1},\mu_{2}\in\MM(X)$}, $\mu_{1}\neq\mu_{2}$.

\subsection{Scale Sequences and Gauges.}\label{subs:scale}                                            
\begin{defi} \label{def:scale} 
By a {\em scale sequence} we mean a sequence  $\bfs=\{s_n\}_1^\infty$ of positive             \mbc{subs:scale}
real numbers such that \mbox{$\lim_{n\to\infty}\limits s_n=\infty$}.                          \mbc{def:scale} 
For $\alp>0$, the sequence $\bfs_\alp=\{n^\alp\}_1^\infty$ will be referred to as
the $\alp$-power sequence.                                                                    \mbc{1.25}                       
\end{defi}

\begin{defi}\label{def:gauge}
Let  $(X,d)$ be a metric space. By a {\em gauge} (on $X$) we mean a measurable map          \mbc{def:gauge}
$f\colon X\to[0,\infty]$. Let $\mu\in\MM(X)$ be a measure.
A~gauge  $f$  is called \vspm1
{\em
\begin{description}
\item[$\boldsymbol\mu$-extreme] \hsp2 if\, $\hat f(\mu)=\supp(f^*(\mu))\subset\{0,\infty\}$ 
                     (i.\,e.\ if\, $\mu(f^{-1}(\{0,\infty\})=1$);
\item[$\boldsymbol\mu$-constant] \hsp{1.5} if\, $f^{*}(\mu)$ is an atomic measure, i.\,e.\ if\, 
                     $f^{*}(\mu)=\delta_c$ for some $c\in[0,\infty]$;
\item[$\boldsymbol\mu$-trivial] \hsp{5.7}   if\, $f^{*}(\mu)\in\{\delta_0,\delta_\infty\}$.
\end{description}
}
\end{defi}
\vspm1

When using the above terminology we suppress referring to the measure $\mu$ if there 
are no doubts as to the measure $\mu$ implied. Thus, given a measure $\mu\in\MM(X)$,
a gauge on $X$ is trivial if and only if it is both extreme and constant.

   Given a metric system  $(X,T)$  and a scale sequence  $\bfs=\{s_n\}_1^\infty$, two 
gauges $\phi,\psi$ on $X^2=X\ttimes X$  and one gauge $\rho$ on $X$ are introduced as         \mbc{eqs:gauges}
follows:                                                                
\begin{subequations}\label{eqs:gauges}                                                  
\begin{align}
\phi(x,y)=\Phi(x,y,\bfs,T)=&\,
\liminf_{n\to \infty}\limits \, s_n\,d(T^n(x),y)&
&\text{(\small the connectivity gauge),}\hsp9  \label{eq:phi}\\
\psi(x,y)=\Psi(x,y,\bfs,T)=&\,
\liminf_{n\to \infty}\limits s_n\,d(T^n(x),T^n(y))&
&\text{(\small the proximality gauge),}  \label{eq:psi}\\
\rho(x)=\Phi(x,x,\bfs,T)=&\,
\liminf_{n\to \infty}\limits s_n\,d(T^n(x),x)&
&\text{(\small the recurrence gauge).}  \label{eq:rho}
\end{align}

The above gauges are measurable (because  $T$  is),  so that, given measures              \mbc{eq:gaugestar}
$\mu,\nu\in \MM(X)$, the following measures in $\MM([0,\infty])$\vspm6
\begin{equation}\label{eq:gaugestar}
\phi^*(\mu,\nu)\bydef \phi^*(\mu\ttimes\nu),
\qquad
\psi^*(\mu,\nu)\bydef \psi^*(\mu\ttimes\nu),
\qquad
\rho^*(\mu)
\end{equation}
\end{subequations}
and the subsets $\hat\phi(\mu,\nu),\,\hat\psi(\mu,\nu),\,\hat\rho(\mu)\subset[0,\infty]$
are well  defined (see \eqref{eqs:fall}; here   $\mu\ttimes\nu\in \MM(X^2)$              \mbc{1.3}
stands for the product measure).             
     
We often add subscript  $\alp>0$  to functional symbols ($\phi$, $\psi$ or $\rho$)       \mbc{eqs:gaugesa}
of the above three gauges to specify the $\alp$-power sequence  
$\bfs=\bfs_\alp=\{n^\alp\}_1^\infty$  to be used as a scale sequence:       
\begin{subequations}\label{eqs:gaugesa}
\begin{align}
\phi_\alp(x,y)&=\Phi(x,y,\bfs_\alp,T)=
\liminf_{n\to \infty}\limits \, n^\alp\,d(T^n(x),y)&
&\text{(\small the $\alp$-connectivity gauge),}&
\label{eq:phia}\\
\psi_\alp(x,y)&=\Psi(x,y,\bfs_\alp,T)=
\liminf_{n\to \infty}\limits n^\alp\,d(T^n(x),T^n(y))&
&\text{(\small the $\alp$-proximality gauge),}&
\label{eq:psia}   \\
\rho_\alp(x)&=\Phi(x,x,\bfs_\alp,T)=
\liminf_{n\to \infty}\limits n^\alp\,d(T^n(x),x)&
&\text{(\small the $\alp$-recurrence gauge).}&
\label{eq:rhoa}   
\end{align}
\end{subequations}

Note that the equations \eqref{eqs:gaugesa} make sense and define gauges for $\alp=0$
(even though the constant sequence $s_n=1$ is not a scale sequence).

A remarkable fact is that for an arbitrary $\mmps$\ $(X,T,\mu)$ and all $\alp>0$, 
the $\alp$-connectivity and $\alp$-proximality gauges ($\phi_\alp$ and $\psi_\alp$) are always 
extreme (Theorem~\ref{thm:gsoinf}) while the same assertion does not need to hold for the  
$\alp$-recurrence gauges $\rho_\alp$ (Example \ref{ex:gm}, page \pageref{ex:gm}).

Given a \mmps\ $(X,T,\mu)$, we define  three constants 
(taking values in the set\, $[0,\infty]$) in the following way:                             \mbc{eqs:3const}
\begin{subequations}\label{eqs:3const}
\begin{align}
C_\phi&=C_\phi\big((X,T,\mu)\big)=\sup\left\{\{0\}\cup\{\alp>0\mid
       \phi^*_\alp(\mu,\mu)=\delta_0\}\right\}&
       &\text{(\small the connectivity constant)}&\\
C_\psi&=C_\psi\big((X,T,\mu)\big)=\sup\big\{\{0\}\cup\{\alp>0\mid 
       \psi^*_\alp(\mu,\mu)=\delta_0\}\big\}&
       &\text{(\small the proximality constant)}&\label{eqs:3constb}\\
C_\rho&=C_\rho\big((X,T,\mu)\big)=\sup\big\{\{0\}\cup\{\alp>0\mid 
       \rho^*_\alp(\mu)=\delta_0\}\big\}&
       &\text{(\small the recurrence constant)}&
\end{align}
\end{subequations}
\vsp1
According to notation in \eqref{eqs:fall}, \eqref{eq:gaugestar} and \eqref{eqs:gaugesa}, 
$\phi^*_\alp(\mu,\mu)=\delta_0$\, is a shortcut for 
$\phi_{\alp}(x,y)=0\!\pmod{\mu\ttimes\mu}$.  Both expressions mean that\,
$\phi_{\alp}(x,y)=\liminf_{n\to\infty}\limits\, n^\alp d(T^n(x),y)=0$,
for\, $\mu\ttimes\mu$-a.\,a.\ $x,y\in X$. The same interpretation applies for the 
expressions $\psi^*_\alp(\mu,\mu)=\delta_0$ and  $\rho^*_\alp(\mu)=\delta_0$. 

The recurrence constant has been previously connected to the Hausdorff dimension of 
a system \cite{bosh3} and to entropy for symbolic dynamical systems \cite{O-W}. 
This paper links the connectivity and proximality constants to the Hausdorff dimension 
(Proposition \ref{Hdim} and Remark \ref{rem:Hdim prox}).

It also links the proximality 
constant to complexity properties (Propositions \ref{proxiet} and \ref{proxlinrec}).


The terminology chosen in the paper (connectivity, proximality and recurrence) is          \mbc{\large manife-\\station?}
suggestive: larger gauge constants indicate stronger manifestation of the corresponding 
property. For instance the proximality constant of any distal system is 0.


\section{\bf \large The extremality of the connectivity and the proximality gauges}
\label{sec:extreme}
In this section we prove that the ``power'' gauges $\phi_\alp$ and $\psi_\alp$ are always       \mbc{sec:extreme}
extreme (Theorem \ref{thm:gsoinf}). The main tool is the EGT (the Extremality Gauge 
Theorem, Theorem~\ref{thm:egt}), a result of independent interest. The proof of the EGT 
is based on the method of ``a single orbit analysis'' in the spirit of  the book \cite{WBe}.
\subsection{Properties of Scale Sequences} 
Many results regarding the gauges $\phi$, $\psi$ and $\rho$ will be proved in a larger 
generality than scales sequences $\bfs=\bfs_\alp=\{n^\alp\}$. We need the following 
definitions detailing some properties a scale sequence may satisfy. 
\vsp{1}

\begin{defi} \label{def:scalep}                 
A scale sequence $\bfs=\{s_n\}_1^\infty$ is called:                                         \mbc{def:scalep}
\vsp{.5}
\begin{description}
\item[monotone] \hsp8 if \ $s_{n+1}\geq s_n$, for\, $n\in\N$\, large enough,\vsp1
\item[steady] \hsp{14}   if\, $\lim_{n\to\infty}\limits\frac{s_{n+1}}{s_{n}}=1$,
\item [two-jumpy] \hsp{6.6}  if \ $\bfs$ \ is monotone and if \
    $\liminf_{n\to\infty}\limits\frac{s_{2n}}{s_{n}}>1$, 
\item [bounded-ratio] \hsp1 if \ $\sup_{n\geq1}\limits \frac{s_{n+1}}{s_{n}}<\infty$, 
\item [nice] \hsp{19} if  \ $\bfs$ \ is two-jumpy and bounded-ratio. \end{description}
\end{defi}
\vsp1

Power scale sequences $\bfs_\alp$  are ``all of the above'': 
monotone, steady, two-jumpy, bounded-ratio and nice. 
The sequence  $\{2^n\}$ is nice but not steady. The sequence of primes $\{2,3,5,\ldots\}$
is ``all of the above''.

\begin{thm}\label{thm:gsoinf}
Let  $(X, T)$  be a metric system and let\, $\bfs=\{s_n\}$ be a two-jumpy scale sequence.   \mbc{thm:gsoinf}
Let both \mbox{$\mu,\nu\in\MM(X,T)$} be $T$-invatiant measures. Then the connectivity 
and the proximality gauges ($\phi$ and $\psi$) are both \mbox{$\mu\ttimes\nu$-extreme}. In 
particular, for all\, $\alp>0$, both $\phi_\alp$ and $\psi_\alp$ are $\mu\ttimes\nu$-extreme.
\end{thm}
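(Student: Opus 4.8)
Since for $r\ge 1$ the conditions $d(\cdot,\cdot)<r$ and $\min(d,1)(\cdot,\cdot)<r$ cut out the same sets, replacing $d$ by $\min(d,1)$ changes neither $\phi$ nor $\psi$; so I may assume $d\le 1$, in particular $d$ is $\mu\ttimes\nu$-integrable. By the very definition of $\mu\ttimes\nu$-extreme, it suffices to prove $(\mu\ttimes\nu)\bigl(\{(x,y):0<\phi(x,y)<\infty\}\bigr)=0$, and since $\{0<\phi<\infty\}=\bigcup_{j\ge1}\{1/j\le\phi\le j\}$, it is enough to show $(\mu\ttimes\nu)(\{a\le\phi\le b\})=0$ for all $0<a<b<\infty$. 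Writing $\theta:=\liminf_n s_{2n}/s_n>1$ (the two-jumpy hypothesis) and covering $[a,b]$ by finitely many subintervals each of ratio $<\theta$, one reduces further to pairs with $b<\theta a$. All of this applies verbatim to $\psi$.

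\textbf{From subinvariance to a constant.} A two-jumpy sequence is in particular monotone, so $s_{m-1}\le s_m$ eventually, and hence $\phi(Tx,y)=\liminf_m s_{m-1}\,d(T^mx,y)\le\liminf_m s_m\,d(T^mx,y)=\phi(x,y)$; likewise $\psi(Tx,Ty)\le\psi(x,y)$. Thus $\phi$ is subinvariant for $T\ttimes\mathrm{id}$ and $\psi$ is subinvariant for $T\ttimes T$, and both maps preserve $\mu\ttimes\nu$. For each $N$, $\min(\phi,N)\circ(T\ttimes\mathrm{id})\le\min(\phi,N)$ while the two sides have equal (finite) integral, so they agree a.e.; letting $N\to\infty$ gives $\phi\circ(T\ttimes\mathrm{id})=\phi$ and, similarly, $\psi\circ(T\ttimes T)=\psi$ $\ \mu\ttimes\nu$-a.e. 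Passing to ergodic components — of $\mu$ under $T$ for $\phi$, which makes $x\mapsto\phi(x,y)$ $\mu$-a.e.\ equal to a constant $c(y)$ for $\nu$-a.e.\ $y$, and of $\mu\ttimes\nu$ under $T\ttimes T$ for $\psi$ — the task becomes to show the resulting a.e.\ constant lies in $\{0,\infty\}$. The degenerate sub-cases are handled at once: if $y\notin\supp(\mu)$ then $d(T^nx,y)$ is bounded below $\mu$-a.s., forcing $\phi=\infty$; if the relevant ergodic component is carried by a periodic orbit, the orbit is eventually periodic and the liminf is an honest minimum.

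\textbf{The core (the Extremality Gauge Theorem).} It remains to exclude a finite positive value of the constant, and this is exactly what the abstract extremality gauge theorem (Theorem~\ref{thm:egt}) delivers through a single-orbit analysis; it uses only two-jumpiness of $\bfs$. The mechanism is the two-jumpy self-similarity: fixing $\theta'\in(1,\theta)$, one has $s_{2n}\ge\theta's_n$ for all large $n$, so passing from time $n$ to time $2n$ shrinks the critical-size target $B\bigl(y,c/s_n\bigr)$ that the orbit of $x$ meets infinitely often into $B\bigl(y,(c/\theta')/s_n\bigr)$, and iterating the doubling $k$ times into $B\bigl(y,(c/\theta'^{\,k})/s_n\bigr)$. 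The point is that, because the orbit \emph{does} meet the critical-size targets infinitely often, it must in fact meet these $\theta'^{\,k}$-times smaller targets infinitely often too, whence $\liminf_n s_n\,d(T^nx,y)\le c/\theta'^{\,k}$ for every $k$, i.e.\ $c=0$; the same scheme treats $\psi$. The delicate point — and the step I expect to be the main obstacle — is that $T$ is only measurable, so there is no way to compare $T^{2n}x$ with $T^nx$ geometrically and the doubling cannot be executed naively along a single orbit; instead the ``infinitely often along even times'' information has to be transported back to the orbit of a fixed point by measure-theoretic means, which is where the $T$-invariance of $\mu$ and $\nu$ (via recurrence and ergodicity of the relevant components) re-enters and where one must track carefully the measures of the various $\limsup$-sets as the index is doubled.
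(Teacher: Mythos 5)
Your reductions and the ``subinvariance $\Rightarrow$ a.e.\ constancy'' step are fine (and the $\min(\phi,N)$ trick for upgrading subinvariance to invariance is clean), but the proof has a genuine gap exactly where you flag it: you never actually exclude a finite positive value of the constant. You invoke Theorem~\ref{thm:egt} as the thing that ``delivers'' this, which is circular in context --- that theorem \emph{is} the content of the statement, and the paper proves Theorem~\ref{thm:gsoinf} in two lines precisely by applying the EGT to $T\ttimes\mathrm{id}$ and $T\ttimes T$ with $f(x_1,x_2)=d(x_1,x_2)$. Moreover, the mechanism you sketch for it does not work as stated: from ``the orbit meets the targets $B(y,c/s_n)$ infinitely often'' there is no route to ``it meets the $\theta'^{-k}$-shrunk targets infinitely often,'' and no amount of comparing $s_{2n}$ with $s_n$ along a single orbit will produce one, since (as you yourself observe) $T^{2n}x$ and $T^nx$ have no geometric relation.

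The missing idea is a \emph{density} argument, not a shrinking-target argument. Pass to the reciprocal gauge $G(x)=\limsup_n g(T^nx)/s_n$ with $g=1/f$, assume $G\equiv c\in(0,\infty)$ a.e.\ on an ergodic component, and introduce the truncations $G_k(x)=\sup_n g_k(T^nx)/s_n$ where $g_k=g\cdot\mathbf{1}_{\{g\ge k\}}$; these decrease to $G$, so the sets $B_k=\{G_k\ge c'\}$ with $c'=\tfrac{M+1}{2}c>c$ (where $s_{2n}/s_n>M>1$) satisfy $\mu(\bigcap_k B_k)=0$. Now fix a typical $x$ and a near-optimal time $n_k$ with $g_k(T^{n_k}x)/s_{n_k}\ge c(1-\eps)$. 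The key computation is that for \emph{every} $m\in[n_k/2,n_k)$ one has
\[
\frac{g_k\bigl(T^{\,n_k-m}(T^mx)\bigr)}{s_{n_k-m}}
=\frac{g_k(T^{n_k}x)}{s_{n_k}}\cdot\frac{s_{n_k}}{s_{n_k-m}}
\ \ge\ c(1-\eps)\,\frac{s_{2(n_k-m)}}{s_{n_k-m}}\ \ge\ c(1-\eps)M=c',
\]
so $T^mx\in B_k$ for all $m$ in a block of length $n_k/2$. Hence the orbit of a.e.\ $x$ visits $B_N$ along a set of times of upper density at least $1/2$ for every $N$, which by the ergodic theorem forces $\mu(B_N)\ge 1/2$ for all $N$ --- contradicting $\mu(B_N)\to\mu(\bigcap_k B_k)=0$. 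Note that the two-jumpy hypothesis is used to relate $s_{n_k}$ to $s_{n_k-m}$ for $m$ ranging over half of $[0,n_k)$ (giving positive density), not to iterate a contraction of the targets. Your outline cannot be completed without supplying this step or an equivalent one.
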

\begin{cor}\label{cor:gsoinf}
Let  $(X, T, \mu)$  be a \mmps\ and let $\bfs=\{s_n\}$ be a two-jumpy scale sequence.       \mbc{cor:gsoinf}
Then both gauges $\phi,\psi$  are extreme  (i.e., $\mu\ttimes\mu$-extreme). 
In particular, for all\, $\alp>0$, both $\phi_\alp,\psi_\alp$ are extreme.
\end{cor}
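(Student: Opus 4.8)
The plan is to reduce both gauges to one abstract object on the product space and then extract the $\{0,\infty\}$-dichotomy from the two-jumpy hypothesis via a quantitative recurrence argument. Since Corollary~\ref{cor:gsoinf} is the case $\nu=\mu$ of Theorem~\ref{thm:gsoinf}, it suffices to treat the theorem. First I would observe that on $Z:=X\times X$, with observable $h(x,y):=d(x,y)$ and the maps $S_\phi:=T\times\mathrm{id}_X$, $S_\psi:=T\times T$, one has $\phi(x,y)=\liminf_n s_n\,h\bigl(S_\phi^{\,n}(x,y)\bigr)$ and $\psi(x,y)=\liminf_n s_n\,h\bigl(S_\psi^{\,n}(x,y)\bigr)$, and that $\mu\times\nu\in\MM(Z)$ is invariant under both $S_\phi$ and $S_\psi$ (since $T$ preserves $\mu$ and $\nu$, and $\mathrm{id}$ preserves any measure). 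So everything reduces to one abstract lemma — presumably what the paper announces as the EGT, Theorem~\ref{thm:egt}: \emph{if $(Z,S,m)$ is an \mmps, $\mathbf s=\{s_n\}$ a two-jumpy scale sequence, and $h\colon Z\to[0,\infty)$ measurable, then the gauge $L(z):=\liminf_n s_n\,h(S^nz)$ is $m$-extreme.}

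Next I would prove that $L\circ S=L$ $(\mathrm{mod}\ m)$. Because $\mathbf s$ is eventually nondecreasing, reindexing gives $L(Sz)=\liminf_m s_{m-1}h(S^mz)\le\liminf_m s_m h(S^mz)=L(z)$ for all $z$; as $S$ preserves $m$, comparing $\int\min(L\circ S,M)\,dm=\int\min(L,M)\,dm$ with the pointwise inequality $\min(L\circ S,M)\le\min(L,M)$ forces equality $m$-a.e.\ for every finite $M$, hence $L\circ S=L$ a.e. In particular each level set $E_{a,b}:=\{a\le L\le b\}$ is $S$-invariant $(\mathrm{mod}\ m)$, so I may pass to the subsystem it carries.

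For the crux, suppose $m(E)>0$ where $E:=E_{a,b}$, $0<a<b<\infty$, and work inside $(E,S|_E,m|_E)$. Fix $c'\in\bigl(1,\liminf_n s_{2n}/s_n\bigr)$ and $N_1$ with $s_{2n}\ge c's_n$ for $n\ge N_1$, so $s_{2^Jn}\ge(c')^Js_n$ for all $n\ge N_1$; pick $J$ with $(c')^J>4b/a$. Since the sets $\{z\in E:\ s_nh(S^nz)>a/2\ \text{for all }n\ge N\}$ increase to $E$ as $N\to\infty$ (because $L\ge a$ on $E$), I would choose $N_0\ge N_1$ so that $F:=\{z\in E:\ s_nh(S^nz)>a/2\ \forall n\ge N_0\}$ leaves out so little of $E$ that, by the pointwise ergodic theorem, on a set of positive $m$-measure the $S$-orbit visits $F$ with frequency $>1-\tfrac12 2^{-J}$; fix such a $z$. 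Since $L(z)\le b$ there are arbitrarily large $p$ with $s_ph(S^pz)<2b$; take one with $p$ also large, so the window $I_p:=\{n\in\mathbb Z:\ p(1-2^{-J})\le n\le p-N_0\}$ (of size $>\tfrac12 2^{-J}p$) must meet $F$, there being fewer than $\tfrac12 2^{-J}p$ non-visits in $[0,p]$. Taking $n\in I_p$ with $S^nz\in F$ and $j:=p-n\in[N_0,2^{-J}p]$, membership in $F$ gives $h(S^pz)=h\bigl(S^j(S^nz)\bigr)>\tfrac{a}{2s_j}$, while $p\ge2^Jj$ and $j\ge N_1$ give $s_p\ge s_{2^Jj}\ge(c')^Js_j$, hence $h(S^pz)>\tfrac{a(c')^J}{2s_p}$; with $h(S^pz)<\tfrac{2b}{s_p}$ this forces $(c')^J<4b/a$, a contradiction. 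Thus $m(E_{a,b})=0$ for all $0<a<b<\infty$, so $m(\{0<L<\infty\})=0$; applying this to $S_\phi$ and $S_\psi$ gives Theorem~\ref{thm:gsoinf}, and the $\nu=\mu$ case is the Corollary.

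I expect the real difficulty to be exactly the bookkeeping in the crux: forcing a visit to $F$ inside the \emph{short} window $I_p$ near $p$. This is why mere Poincar\'e recurrence is not enough and one needs quantitative recurrence (a positive visit frequency, hence the ergodic theorem), and why two-jumpiness — rather than just $s_n\to\infty$ — is the right hypothesis: it converts each ``halving of the index'' into a fixed multiplicative gain $c'>1$ in the scale, so that $J$ cheap gains overpower the fixed ratio $4b/a$. Note that no ergodicity of $S_\phi$ or $S_\psi$ is available (indeed $S_\phi=T\times\mathrm{id}$ is never ergodic), which is precisely why the invariance alone does not close the argument and the two-jumpy amplification is essential.
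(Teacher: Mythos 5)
Your proposal is correct, and its top layer is exactly the paper's: pass to $X^2$ with the maps $T\times\mathrm{id}$ and $T\times T$ and the observable $d(x_1,x_2)$, so that everything rests on the abstract one-orbit lemma that the paper isolates as the EGT (Theorem~\ref{thm:egt}). Where you genuinely diverge is in how you prove that lemma. The paper first reduces to the ergodic case by ergodic decomposition, works with the reciprocal $G(x)=\limsup_n g(T^nx)/s_n$, uses ergodicity to make $G$ a constant $c$, and then applies the doubling inequality \emph{once}: for $m\in[n_k/2,n_k)$ the point $T^mx$ lands in $B_k=\{G_k\ge M(1-\eps)c\}$, a set whose measure tends to $0$, while these times have upper density at least $1/2$ --- contradicting the ergodic theorem. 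You keep the non-ergodic system, replace constancy of the gauge by the a.e.\ $S$-invariance of $L$ and of its level sets $E_{a,b}$, iterate the doubling $J$ times to manufacture a gain $(c')^J>4b/a$, and reach a pointwise numerical contradiction at a single time $p$ by forcing (via Birkhoff applied to $\chi_F$) a visit to $F$ inside the short window $[p(1-2^{-J}),\,p-N_0]$. The trade-off: your route avoids ergodic decomposition altogether (which, for a general \mmps\ on a merely separable metrizable space, quietly requires a standard-Borel hypothesis), and it also sidesteps the paper's separate treatment of the set where $g=\infty$, at the cost of the iterated-doubling and window/frequency bookkeeping; the paper's route is shorter once ergodicity is granted, since constancy of $G$ lets a single doubling close the argument. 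Both proofs exploit the identical mechanism --- two-jumpiness converts each halving of the index into a fixed multiplicative gain in the scale, transferred along the orbit through a dyadic window --- so the essential idea is shared even though the measure-theoretic scaffolding is not. The only loose ends in your write-up are routine: you should replace $E_{a,b}$ by a strictly $S$-invariant version before restricting the system, and take $N_1$ large enough that monotonicity of $\bfs$ holds from index $N_1$ on; neither requires a new idea.
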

As the following example shows, the analogue of the Theorem \ref{thm:gsoinf} fails          \mbc{1.46}
for the recurrence gauge. In what follows, we write  $\fracpart{\cdot}$ for the 
distance to the closest integer: 
$\fracpart{x}=\min_{n\in\Z}\limits |x-n|$,\, for $x\in\R$.\vspm{1.5}
\begin{ex}\label{ex:gm}
Let $(\ui,T)$ be the golden mean rotation $T(x)=x+\alp\pmod1,\,\alp=\frac{\sqrt5-1}2$,      \mbc{ex:gm}
of the unit interval (circle)\, $\ui=[0,1)$. Then 
$\rho_1(x)=\liminf_{n\to\infty}\limits\, n\fracpart{n\alp}=\frac1{\sqrt5}$, for all 
$x\in\ui$; see e.\,g. \cite[Chap.1, \S6]{Ca}. Thus the \mbox{$1$-recurrence} 
gauge for the \mmps\ $(\ui,T,\lam)$ is constant but not extreme. (Note that 
the Lebesgue measure  $\lam\in\MM(\ui,T)$  is $T$-invariant). 
\end{ex}
Theorem \ref{thm:gsoinf}  follows easily from the EGT (Theorem \ref{thm:egt} below).       \mbc{1.48}
\vsp1 

\subsection{The Extremality Gauge Theorem (EGT)}
One needs the following definition (cf.\ Definition~\ref{def:gauge}).                  
\begin{defi}\label{def:gauge2}                                      
Let  $(X,\BB,\mu)$  be a probability measure space. A map  $f\colon X\to[0,\infty]$         \mbc{def:gauge2}
is called a gauge (on $X$) if  $f$ is measurable (i.\,e., for every open subset  
$U\subset \R$, $f^{-1}(U)\in\BB$). 
Such an $f$  is said to be an extreme gauge if\, $\mu(f^{-1}(\{0,\infty\})=1$.
\end{defi}
  The following theorem (the Extremality Gauge Theorem) claims that certain procedure
to derive new gauges always leads to extreme gauges; this theorem plays an important 
role in this paper.
\begin{thm}[EGT] \label{thm:egt}
Let $(X,\BB,\mu,T)$ be a measure preserving system and let $f\colon X\to[0,\infty]$         \mbc{thm:egt}
be a gauge. Let\, $\bfs=\{s_n\}_1^\infty$ be a two-jumpy scale sequence. Then             
the gauge \ $F(x)=\liminf_{n\to\infty}\limits\, s_n{f(T^nx)}$ \ is extreme.
\end{thm}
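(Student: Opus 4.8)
The plan is to show that the event $\{F = 0\}$ is $T$-invariant up to measure zero, so that by ergodicity — wait, we are not assuming ergodicity. Let me restart: the plan is to show directly that $\mu(f^{-1}(\{0,\infty\})) = 1$ is false in general, but the claim is about $F$, not $f$. Here is the correct approach. I want to prove $\mu(\{x : 0 < F(x) < \infty\}) = 0$. Suppose not; then there exist $0 < a < b < \infty$ with $\mu(E) > 0$, where $E = \{x : a \le F(x) \le b\}$. The key idea is a "single orbit" comparison: along a typical orbit in $E$, I will compare the values $s_n f(T^n x)$ at times $n$ and at times $2n$ (or, more precisely, exploit the two-jumpy hypothesis $\liminf_n s_{2n}/s_n = c > 1$).

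First I would fix notation: let $g_n(x) = s_n f(T^n x)$, so $F(x) = \liminf_n g_n(x)$. The two-jumpy hypothesis gives a constant $c > 1$ and $N_0$ with $s_{2n}/s_n \ge c$ for all $n \ge N_0$. Now observe the exact relation
\[
g_{2n}(x) = s_{2n} f(T^{2n} x) = \frac{s_{2n}}{s_n}\, s_n f\bigl(T^n (T^n x)\bigr) = \frac{s_{2n}}{s_n}\, g_n(T^n x).
\]
Next, I would use the Poincaré recurrence philosophy to return to $E$ infinitely often: for $\mu$-a.e. $x \in E$, the orbit $\{T^n x\}$ lies in $E$ for infinitely many $n$ (this is exactly Poincaré recurrence applied to the set $E$, using $T$-invariance of $\mu$). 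Pick such an $x$ and such a time $m$ with $T^m x \in E$ and $m \ge N_0$. For this $x$, since $F(T^m x) \le b$, there are infinitely many $k$ with $g_k(T^m x) \le b + \varepsilon$; combined with the displayed identity at $n = m$... here I need the index $m$ to match the lower index of $g$, so I should instead write $g_{2m}(x) = (s_{2m}/s_m) g_m(T^m x)$ and iterate this structure. The cleanest version: for a recurrent point $x\in E$, choosing a large return time $m$, one gets $F(x) = \liminf_k g_k(x) \le \liminf_k g_{km}(x)$-type estimates that force $F(x)$ to be multiplied down by factors of $c$ infinitely often, contradicting $F(x) \ge a > 0$ — unless $F(x) = 0$ or $F(x)=\infty$.

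The main obstacle — and where I expect to spend real effort — is making the "multiply down by $c$ infinitely often" argument airtight, because the identity $g_{2n}(x) = (s_{2n}/s_n) g_n(T^n x)$ only relates the orbit of $x$ to the orbit of $T^n x$, and a priori $\liminf$ along the orbit of $T^n x$ need not control $\liminf$ along the orbit of $x$ at the *same* scale. The resolution is to work on the set $E$ of positive measure and use that the *liminf* is witnessed along a subsequence: on $E$, restrict attention to the (measurable) subset $E' \subset E$ of points where $g_n(x) \le b + \varepsilon$ for infinitely many $n$; since $F \le b$ on $E$, we have $E' = E$ mod null. Then Poincaré recurrence on $E$ gives, for a.e. $x\in E$, infinitely many return times $m_1 < m_2 < \cdots$ with $T^{m_j}x \in E$; picking for each $j$ a witnessing time $k_j \ge N_0$ large with $g_{k_j}(T^{m_j}x) \le b+\varepsilon$ and $m_j$ itself $\ge N_0$, the identity gives $g_{m_j + k_j}(x) = $ (something), and iterating the doubling structure along a dyadic scheme of return times produces values $g_{n}(x)$ that are at most $c^{-r}(b+\varepsilon)$ for arbitrarily large $r$, hence $F(x) \le \inf_r c^{-r}(b+\varepsilon) = 0$, contradicting $x \in E$. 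I would organize this as: (1) reduce to $E = \{a \le F \le b\}$, $\mu(E)>0$; (2) state Poincaré recurrence for $E$; (3) prove the scaling identity; (4) carry out the iterative descent along return times to conclude $F = 0$ a.e. on $E$, the desired contradiction. Then $\mu(F^{-1}(\{0,\infty\})) = 1$, i.e. $F$ is extreme.
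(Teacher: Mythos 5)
Your overall strategy (compare values at times $n$ and $2n$ via the identity $s_{2n}f(T^{2n}x)=\tfrac{s_{2n}}{s_n}\cdot s_nf(T^n(T^nx))$ and exploit two-jumpiness) is the same single-orbit idea the paper uses, but the step you flag as ``the main obstacle'' is exactly where the argument breaks, and your proposed resolution does not repair it. The identity transfers a \emph{single} small value from the orbit of $x$ to the orbit of $T^mx$: if $s_{N}f(T^{N}x)\le b+\varepsilon$ and $m\in[N/2,N)$, then $s_{N-m}f\bigl(T^{N-m}(T^mx)\bigr)\le (b+\varepsilon)/M$. That is one witnessing time for $T^mx$, not infinitely many, so it says nothing about $F(T^mx)=\liminf$. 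Transferring in the other direction — from witnesses for $T^{m}x$ back to the orbit of $x$, which is what you would need to contradict $F(x)\ge a$ — multiplies the value \emph{up} by $s_{m+k}/s_k$, and for a general two-jumpy sequence (e.g.\ $s_n=2^{2^n}$) this ratio is unbounded even for fixed $m$; two-jumpy does not imply bounded-ratio. So the ``iterative descent producing $c^{-r}(b+\varepsilon)$ along the orbit of $x$'' cannot be carried out, and Poincar\'e recurrence to $E$ gives you nothing to iterate on.

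The paper closes this gap with two devices you are missing. First, it replaces $f$ by truncations (in the reciprocal formulation, $g_k=g\cdot\mathbf{1}_{\{g\ge k\}}$ with $g=1/f$) and works with $G_k(y)=\sup_n g_k(T^ny)/s_n$, which decrease pointwise to $G=1/F$; the truncation forces any witnessing time to satisfy $f(T^ny)\le 1/k$, hence witnessing times tend to infinity as $k\to\infty$, which is how a one-time bound on $T^mx$ is converted into information about the $\liminf$. Second, the contradiction is measure-theoretic rather than pointwise: after reducing to the ergodic case (where monotonicity of $\bfs$ makes $G$ a.e.\ constant $=c$), one shows that for a.e.\ $x$ the set of $m$ with $T^mx\in B_k=\{G_k\ge c'\}$ (where $c'>c$) has upper density $\ge 1/2$, so the ergodic theorem forces $\mu(B_k)\ge 1/2$, contradicting $\mu\bigl(\bigcap_k B_k\bigr)=0$. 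Without the truncation and without this density/ergodic-theorem step (plus the separate treatment of the set where $f=0$), your outline does not yield a proof.
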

\begin{rem}\label{rem:gm}
The assumption for $\bfs$ to be two-jumpy is important. The claim of                        \mbc{rem:gm}
Theorem~\ref{thm:egt} fails for the scale sequence\, $\bfs=(\log(n+1))_1^{\infty}$ 
and the golden mean rotation  $(\ui,T,\lam)$ (see Example \ref{ex:gm} above) with            \mbc{1.5}
$f(x)=-1/\log\fracpart{x}$. (It is not hard to show that in this case     
$F(x)\equiv1 \pmod{\lam}$). In fact there are similar examples for almost every 
IET \cite{log iet}. These `logarithm laws' are widely studied and one can view 
some of this paper's results as exploring finer analogues.

On the other hand, the EGT (Theorem \ref{thm:egt}) holds if  $\bfs=\{s_n\}_1^\infty$ is        \mbc{1.51}
a power scale sequence:  $s_n=n^\alp$, with some $\alp>0$. More generally, if a sequence       
$\bfs=\{s_n\}$  is regularly growing  (in the sense e.\,g. that it is defined as 
the restriction  $s_n=g(n)$ of a function $g(x)$  in a Hardy field,  see e.\,g. 
\cite{hardy unif} for introduction in the subject), then a necessary and sufficient condition 
for a scale sequence $\bfs$ to be two-jumpy is that $s_n>n^\alp$, for some $\alp>0$  
and all large~$n$. 
\end{rem}\vspm1
\begin{proof}[\bf Proof of Theorem \ref{thm:gsoinf}]
Set $Y=X^2$ and consider the maps $T'\colon Y\to Y$ and $T''\colon Y\to Y$ defined         \mbc{proof\\thm:gsoinf}
by the formulae $T'(x_1,x_2)=(Tx_1,x_2)$ and  $T''(x_1,x_2)=(Tx_1,Tx_2)$. Clearly, 
$\mu\ttimes\mu\in\MM(Y,T')$ and  also  $\mu\ttimes\mu\in\MM(Y,T'')$. 
In order to prove the inclusion $\hat\phi(\mu,\mu)\subset\{0,\infty\}$, one applies
Theorem~\ref{thm:egt} with $T=T'$, $X=Y$ and $f(\bfx)=d(x_1,x_2)$ where\,
$\bfx=(x_1,x_2)\in Y$.
To prove the inclusion $\hat\psi(\mu,\mu)\subset\{0,\infty\}$, one applies
Theorem~\ref{thm:egt}  with  $T=T''$ and with the same  $X$ and $f$\, 
($X=Y$ and $f(\bfx)=d(x_1,x_2)$).
\end{proof}
\begin{proof}[\bf Proof of Theorem {\ref{thm:egt}}]
Under the standard convention that $\frac10=\uvm{.8}\infty$ and 
$\frac1{\uvm{.77}\infty}=0$, set the gauges  $g(x)=1/f(x)$ and
$
G(x)=\limsup_{n\to\infty}\limits\, \dfrac{g(T^nx)}{s_n}.
$ 
Since\,  $G(x)=1/F(x)$,  it would suffice to prove that the gauge $G(x)$  is extreme.        \mbc{1.54}

Without loss of generality we may assume that the system $(X,T)$ is ergodic
(by passing to the ergodic decomposition of $(X,T)$). 

Consider the sets\, 
$S=g^{-1}(\infty)=\{x\in X\mid g(x)=\infty\}$, 
$S_1=\bigcup_{n=0}^\infty\limits T^{-n}(S)$\, and\, 
$S_2=\bigcap_{n=0}^\infty\limits T^{-n}(S_1)$.

If $\mu(S)>0$, then $\mu(S_1)=\mu(S_2)=1$ in view of the ergodicity of $T$.
Since $G(x)=\infty$ for all $x\in S_2$, it follows that  $G^*(\mu)=\delta_\infty$,
i.\,e. that $G$  is extreme.
It remains to consider the case $\mu(S_1)=0$. 

By replacing $X$ with $X\setminus S_1$,
we may assume without loss of generality that $S=\emptyset$. For $k\geq1$, denote \vspm3
\[
G_k(x)=\sup_{n\geq1}\limits\, \frac{g_k(T^nx)}{s_n},
\hsp8 \text{where }\
g_k(x)=
\begin{cases}
g(x),&\text{if }\, g(x)\geq k\\
0,&\text{otherwise.}
\end{cases}
\]

Observe that  $\big\{G_k(x)\big\}_1^\infty$  is a non-increasing sequence of gauges on $X$           \mbc{1.55}
pointwise converging to~$G(x)$.  On the other hand, $G(T(x))\geq G(x)$ (because 
$\bfs$ is increasing). By the ergodicity of $T$, $G(x)$ must be constant:
$G(x)= c\!\pmod{\mu}$, for some\,  $c\in[0,\infty]$.
We have to show that $c\in\{0,\infty\}$. 

Assume to the contrary that\, $0<c<\infty$.
Since $\bfs$ is two-jumpy, we have $\liminf_{n\to\infty}\limits \frac{s_{2n}}{s_n}>1$.
Observe that the definition of $G$ is insensitive to modifications
of a finite number of the terms of sequence\, $\bfs$. Therefore
we may assume that there exists a constant $M>1$  such that $\frac{s_{2n}}{s_n}>M$ 
for all $n\geq1$. Set 
\begin{align*}
    \tfrac12>\eps&=\,\tfrac{M-1}{2M}>0, \hsp{1.5}  c'=M(1-\eps)\,c=\tfrac{(M+1)}2\,c>c,\\[1mm]
    A_k&=\,\{x\in X\mid \, G_k(x)\geq c\},\hsp4  A=
        \uvm1{\bigcap_{k\geq1}\nolimits} A_k,\\[1mm]
    B_k&=\,\{x\in X\mid \, G_k(x)\geq c'\}, \hsp{2.7} B=
        \uvm1{\bigcap_{k\geq1}\nolimits} B_k.
\end{align*}

Since $\big\{G_k(x)\big\}_1^\infty$ is a non-increasing sequence of functions converging to the     \mbc{1.57}
constant\, $c$\, and since $c'>c$, it follows that the sequences of sets 
$(A_k)_1^\infty$ and $(B_k)_1^\infty$ are non-increasing: $A_k\supset A_{k+1}\supset 
A,\ B_k\supset B_{k+1}\supset B$ ($k\geq1$), and that $\mu(A)=1$ and $\mu(B)=0$.

Now let $x\in A$.  Fot every $k\geq 1$, we have  $x\in A_k$ 
and hence one can select $n_k\geq1$  such that  
\[
\frac{g_k(T^{n_k}x)}{s_{n_k}}\geq c(1-\eps).
\]
Since  $c(1-\eps)>0$, it follows that  $g(T^{n_k}x)\geq k$ and therefore
$\lim_{k\to\infty}\limits n_k=\infty$. 
(Here we use the assumption that $S=g^{-1}(\infty)=\emptyset$). Set
\[
    I_k=\big[\tfrac{n_k}2,n_k\big)\cap\N=\left\{m\in \N\mid 
    \tfrac{n_k}2\leq m<n_k\right\}, \qquad
    J_N=\bigcup_{k\geq N} I_k, \hsp2  N\geq1.
\]
\vspm4
We claim that  $T^m(x)\in B_k$   for $m\in I_k$.  Indeed,                                   \mbc{1.59}
\[
    \frac{g_k\big(T^{n_k-m}(T^mx)\big)}{s_{n_k-m}}=\frac{g_k(T^{n_k})}{s_{n_k}}\cdot
    \frac{s_{n_k}}{s_{n_k-m}}\geq c(1-\eps)\cdot\frac{s_{2(n_k-m)}}{s_{n_k-m}}
    \geq c(1-\eps)M=c'.
\]

Fix $N\geq1$. It follows that $T^m(x)\in B_N$ for all $m\in J_N=\bigcup_{k\geq N} I_k$ 
(since $\{B_k\}$ is a non-increasing sequence of sets). Note that $J_N\subset\N$  is 
a subset of upper density at least $1/2$.                                               
Since $x\in A$  is arbitrary and $\mu(A)=1$,  the ergodicity of $T$  implies
$\mu(B_N)\geq1/2$  which is in contradiction with  
\mbox{$\lim_{N\to\infty}\limits \mu(B_N)=\mu(B)=0$}.
\end{proof}\vsp0

\subsection{Proximality Constant of a Weakly Mixing \mmps}
\begin{prop}[The invariance of the the proximality gauge]\label{prop:disinv}
Let $(X,T,\mu)$  be a \mmps\ and let \mbox{$\bfs=\{s_n\}_1^\infty$}  be either a monotone or       \mbc{prop:disinv}
steady scale sequence.  Then the proximality gauge\,  
\begin{equation}\label{eq:dist3}
\psi(x,y)=\liminf_{n\to\infty}\limits\,s_n\,d(T^nx,T^ny), \quad x,y\in X,
\end{equation}  
is\, $T\ttimes T$-invariant: \  $\psi(Tx,Ty)\equiv \psi(x,y) \pmod{\mu\ttimes\mu}$.         \mbc{1.6}
\end{prop}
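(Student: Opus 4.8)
The plan is to reduce the invariance statement to an elementary comparison between the scale sequence $\bfs$ and its one-step shift, and then, in the monotone case only, to use the $T\ttimes T$-invariance of $\mu\ttimes\mu$ to promote an inequality to an equality. First I would write $a_n=a_n(x,y)=d(T^nx,T^ny)$ and perform the substitution $m=n+1$:
\[
\psi(Tx,Ty)=\liminf_{n\to\infty}\limits s_n\,d(T^{n+1}x,T^{n+1}y)=\liminf_{m\to\infty}\limits s_{m-1}\,a_m,\qquad \psi(x,y)=\liminf_{m\to\infty}\limits s_m\,a_m.
\]
Only large values of $m$ contribute to these $\liminf$'s, so the missing term $s_0$ is harmless. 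Thus everything comes down to comparing $\liminf_m s_{m-1}a_m$ with $\liminf_m s_m a_m$ for an arbitrary sequence $a_m\ge0$.

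Second, I would dispatch the steady case ($s_{m-1}/s_m\to1$, equivalently $s_m/s_{m-1}\to1$). Here I claim the two $\liminf$'s above are equal for \emph{every} nonnegative sequence $(a_m)$, so that in fact $\psi(Tx,Ty)=\psi(x,y)$ holds at every point of $X^2$ and no measure theory is needed. Indeed, put $L=\liminf_m s_m a_m\in[0,\infty]$ and choose $m_k\to\infty$ with $s_{m_k}a_{m_k}\to L$; then $s_{m_k-1}a_{m_k}=(s_{m_k-1}/s_{m_k})\,s_{m_k}a_{m_k}\to L$ (the case $L=\infty$ being immediate), so $\liminf_m s_{m-1}a_m\le L$; the reverse inequality is the identical argument with $s_{m-1}$ and $s_m$ interchanged, using $s_m/s_{m-1}\to1$.

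Third, I would treat the monotone case ($s_{m-1}\le s_m$ for all large $m$). Now $s_{m-1}a_m\le s_m a_m$ for all large $m$ gives, for free, the pointwise inequality $\psi(Tx,Ty)\le\psi(x,y)$ on all of $X^2$. To convert this into an a.e.\ equality I would invoke two standard facts. (i) $T\ttimes T$ preserves $\mu\ttimes\mu$: this follows from $T^{*}\mu=\mu$ because a product measure is determined by its values on rectangles and $(T\ttimes T)^{-1}(A\times B)=T^{-1}A\times T^{-1}B$. (ii) If $S$ preserves a probability measure $\nu$ on $Y$ and $h\colon Y\to[0,\infty]$ is measurable with $h\circ S\le h$ everywhere, then $h\circ S=h$ $\nu$-a.e.; to see this, note that for each $c\ge0$ one has $\{h\circ S>c\}=S^{-1}\{h>c\}\subset\{h>c\}$ with $\nu$-equal measures (since $S$ is measure preserving), hence $\nu(\{h>c\}\setminus\{h\circ S>c\})=0$, and letting $c$ run over $\Q_{\ge0}$ shows $\{h\ne h\circ S\}$ is $\nu$-null. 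Applying (ii) with $Y=X^2$, $\nu=\mu\ttimes\mu$, $S=T\ttimes T$ and $h=\psi$ yields $\psi(Tx,Ty)=\psi(x,y)\pmod{\mu\ttimes\mu}$.

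The only delicate point is the monotone case: monotonicity alone supplies $\psi\circ(T\ttimes T)\le\psi$ pointwise but \emph{not} the reverse inequality (a monotone scale sequence can have arbitrarily large jumps $s_m/s_{m-1}$, and then the index shift can strictly decrease $\liminf_m s_m a_m$ at individual points), so it is exactly there that the $T\ttimes T$-invariance of $\mu\ttimes\mu$ is genuinely used. Everything else — the index shift, the steady-case estimate, the product-measure remark, and the "decreasing along orbits $\Rightarrow$ a.e.\ invariant" lemma — is routine.
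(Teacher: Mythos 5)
Your proposal is correct and follows essentially the same route as the paper: the paper's (two-line) proof likewise derives the pointwise inequality $\psi(Tx,Ty)\le\psi(x,y)$ from the monotone/steady hypothesis on $\bfs$ and then concludes a.e.\ equality from the $T\ttimes T$-invariance of $\mu\ttimes\mu$. Your extra observations — that the steady case already gives pointwise equality, and the explicit proof of the ``decreasing along orbits implies a.e.\ invariant'' lemma — are just worked-out details of the same argument.
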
\vsp0
\begin{prop}[The constancy of the the proximality gauge]\label{prop:dismix}
Let $(X,T,\mu)$  be a weakly mixing  \mmps\ and let $\bfs=\{s_n\}_1^\infty$  be either     \mbc{prop:dismix}
a monotone or steady scale sequence.  Then the measure $\psi^*(\mu,\mu)$ is  atomic,
i.\,e., the proximality gauge $\psi$ (see \eqref{eq:dist3}) is $\mu\ttimes\mu$-constant.
In particular, $\psi^*_\alp(\mu,\mu)=\delta_0$\,  if\, $0\leq\alp<C_\psi$ and 
$\psi^*_\alp(\mu,\mu)=\delta_\infty$  if\, $C_\psi<\alp<\infty$  where  
$C_\psi=C_\psi(X,T,\mu)$ is the proximality constant (see \eqref{eqs:3constb}).
\end{prop}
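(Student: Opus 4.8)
The plan is to combine the invariance from Proposition~\ref{prop:disinv} with an ergodicity argument applied to $T\ttimes T$. First I would recall that, since $(X,T,\mu)$ is weakly mixing, the product system $(X^2,T\ttimes T,\mu\ttimes\mu)$ is ergodic. By Proposition~\ref{prop:disinv}, the gauge $\psi\colon X^2\to[0,\infty]$ satisfies $\psi\circ(T\ttimes T)=\psi$ modulo $\mu\ttimes\mu$, i.e.\ $\psi$ is a $(T\ttimes T)$-invariant measurable function on the ergodic system $X^2$. Hence $\psi$ is $\mu\ttimes\mu$-a.\,e.\ equal to a constant $c\in[0,\infty]$, which is exactly the statement that $\psi^*(\mu,\mu)=\delta_c$ is atomic.

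For the ``in particular'' clause I would argue by monotonicity of $\alp\mapsto\psi_\alp$ together with the preceding constancy. For a power scale sequence $\bfs_\alp=\{n^\alp\}$ (which is both monotone and steady, so the hypotheses apply for every $\alp\geq0$), the constant value $c(\alp)$ of $\psi_\alp$ is nondecreasing in $\alp$: if $\alp<\beta$ then $n^\alp d(T^nx,T^ny)\leq n^\beta d(T^nx,T^ny)$ for $n\geq1$, so $\psi_\alp\leq\psi_\beta$ pointwise and thus $c(\alp)\leq c(\beta)$. Moreover if $\psi^*_\alp(\mu,\mu)=\delta_0$ for some $\alp>0$, then for $0\leq\beta<\alp$ one has $n^\beta d(T^nx,T^ny)=n^{\beta-\alp}\cdot n^\alp d(T^nx,T^ny)$, and along the subsequence realizing the liminf zero for the $\alp$-gauge the extra factor $n^{\beta-\alp}\to0$, forcing $\psi^*_\beta(\mu,\mu)=\delta_0$ as well; similarly $\psi^*_\alp(\mu,\mu)=\delta_\infty$ propagates upward to all $\beta>\alp$. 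By the definition~\eqref{eqs:3constb} of $C_\psi$ as the supremum of the $\alp$ for which $\psi^*_\alp(\mu,\mu)=\delta_0$, this gives $\psi^*_\alp(\mu,\mu)=\delta_0$ for $0\leq\alp<C_\psi$. For $\alp>C_\psi$ the gauge is not $\delta_0$; since it is atomic by the first part, its value is some $c(\alp)\in(0,\infty]$, and I must upgrade $c(\alp)\in(0,\infty)$ to $c(\alp)=\infty$. This is where Corollary~\ref{cor:gsoinf} enters: a power scale sequence is two-jumpy, so $\psi_\alp$ is $\mu\ttimes\mu$-extreme, hence its atomic value lies in $\{0,\infty\}$; combined with ``not $\delta_0$'' this yields $\psi^*_\alp(\mu,\mu)=\delta_\infty$ for $\alp>C_\psi$.

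The main obstacle is essentially bookkeeping rather than depth: one must be careful that Proposition~\ref{prop:disinv} is being applied with a scale sequence satisfying its hypotheses (monotone or steady), that the a.\,e.\ invariance on a non-complete or merely Borel setting still lets one invoke the ergodic-theoretic ``invariant functions are constant'' statement (pass to a $(T\ttimes T)$-invariant full-measure set on which the equality is genuine, or use that measurable a.\,e.-invariant functions are a.\,e.\ constant under ergodicity), and that the extremality input (Corollary~\ref{cor:gsoinf}) and the constancy input are both legitimately available for $\bfs_\alp$. The only genuinely substantive point is the dichotomy at $\alp>C_\psi$, which is handled precisely by extremality plus constancy forcing the value into $\{0,\infty\}$ and then out of $0$ by the definition of $C_\psi$; the monotonicity argument I would spell out only to the extent of the one-line subsequence estimates above.
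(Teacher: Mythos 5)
Your argument is correct and coincides with the paper's own proof, which is exactly the one line ``$\psi$ is $T\ttimes T$-invariant by Proposition~\ref{prop:disinv}, and $T\ttimes T$ is ergodic by weak mixing, hence $\psi$ is a.\,e.\ constant.'' The ``in particular'' clause, which the paper leaves implicit, you also handle correctly: monotonicity of $\alp\mapsto\psi_\alp$ gives $\delta_0$ below $C_\psi$, and above $C_\psi$ the extremality of $\psi_\alp$ (Corollary~\ref{cor:gsoinf}, applicable since power scale sequences are two-jumpy) combined with constancy and the definition of $C_\psi$ forces the atomic value to be $\infty$.
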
 
\begin{cor}\label{cor:dismix}
Under the conditions of the above theorem, assume that the scale sequence $\bfs$ is         \mbc{cor:dismix}
two-jumpy. Then the proximality gauge is trivial, i.\,e.\ 
$\psi^*(\mu,\mu)\in\{\delta_0,\delta_\infty\}$.
\end{cor}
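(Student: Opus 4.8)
The plan is to combine the two immediately preceding results: the extremality of $\psi$ supplied by Corollary~\ref{cor:gsoinf} and the constancy of $\psi$ supplied by Proposition~\ref{prop:dismix}. Recall (from the discussion following Definition~\ref{def:gauge}) that, for a fixed measure, a gauge is trivial if and only if it is simultaneously extreme and constant; so once both properties are in hand the conclusion is immediate, and the only thing to check is that both hypotheses are genuinely available under the assumptions of the corollary.

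First I would note that a two-jumpy scale sequence is in particular monotone --- indeed ``two-jumpy'' is defined (Definition~\ref{def:scalep}) as ``$\bfs$ is monotone and $\liminf_{n\to\infty} s_{2n}/s_n>1$''. Hence Proposition~\ref{prop:dismix} applies verbatim to our $\bfs$ (whose hypothesis requires $\bfs$ monotone or steady) and yields that $\psi^*(\mu,\mu)$ is atomic: $\psi^*(\mu,\mu)=\delta_c$ for some $c\in[0,\infty]$. On the other hand, since $\bfs$ is two-jumpy, Corollary~\ref{cor:gsoinf} gives that $\psi$ is $\mu\ttimes\mu$-extreme, i.e.\ $\hat\psi(\mu,\mu)=\supp(\psi^*(\mu,\mu))\subset\{0,\infty\}$. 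Combining the two, $\{c\}=\supp(\delta_c)\subset\{0,\infty\}$, so $c\in\{0,\infty\}$ and therefore $\psi^*(\mu,\mu)\in\{\delta_0,\delta_\infty\}$, which is precisely the assertion that $\psi$ is $\mu\ttimes\mu$-trivial.

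I do not expect any real obstacle here: the statement is a formal consequence of Corollary~\ref{cor:gsoinf} and Proposition~\ref{prop:dismix}, and the only point that requires a moment's attention is the bookkeeping of hypotheses --- namely checking that ``two-jumpy'' subsumes ``monotone'', so that Proposition~\ref{prop:dismix} is genuinely applicable with the very same scale sequence for which Corollary~\ref{cor:gsoinf} gives extremality.
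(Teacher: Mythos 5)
Your proof is correct and is essentially identical to the paper's: the authors likewise observe that it suffices to show $\psi$ is both $\mu\ttimes\mu$-extreme (from Theorem~\ref{thm:gsoinf}, of which Corollary~\ref{cor:gsoinf} is the $\nu=\mu$ case you cite) and $\mu\ttimes\mu$-constant (from Proposition~\ref{prop:dismix}). Your extra remark that ``two-jumpy'' subsumes ``monotone'' is a correct and worthwhile bit of hypothesis bookkeeping that the paper leaves implicit.
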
 
\begin{proof}[\bf Proof of Proposition \ref{prop:disinv}]
Denote $Y=X^2$ and $T''=T\ttimes T$, $\nu=\mu\ttimes\mu$.  The assumption on\, $\bfs$\,      \mbc{1.61}
implies the inequality $\psi(T''y)\leq\psi(y)$, for all $y\in Y$. 
Since  $\nu\in\MM(Y,T'')$, the claim of Proposition \ref{prop:disinv} follows.
\end{proof}
\begin{proof}[\bf Proof of Proposition \ref{prop:dismix}]
Follows from Proposition \ref{prop:disinv} because  $T\ttimes T$  is ergodic.                \mbc{1.62}
\end{proof}
\begin{proof}[\bf Proof of Corollary \ref{cor:dismix}]
It is enough to show that $\psi$ is both\, $\mu\ttimes\mu$-extreme and 
$\mu\ttimes\mu$-constant. The first assertion follows from 
Theorem \ref{thm:gsoinf};  the second assertion is a consequence of 
Proposition \ref{prop:dismix}.
\end{proof}\vsp0

\section{\bf \large Results on IETs}\label{sec:resiets}                                   \mbc{sec:resiets}

\subsection{A brief introduction}
Several results in the paper concern the systems on the unit interval $X=\ui=[0,1)$, 
and in particular the systems called {\em interval exchange transformations} (IETs).                 
Let  $\{r, L, \pi\}$  be a triple such that
\begin{itemize}
\item  $r\geq2$ is an integer,
\item  $\pi\in S^r$  is a permutation on the set  $J_r=\{1,2,\ldots,r\}$,
\item  $L$ is a positive probability vector, 
     $L=(\ell_1, \ell_2,\ldots,\ell_r)\in\R^r$ with 
     $\sum_{i=1}^r \ell_i=1$ and all $\ell_i>0$.
\end{itemize}
An IET is a map $T\colon\ui\to\ui$ completely determined by the parameters $\{r,L,\pi\}$   \mbc{1.42}
in the following way. Set  $s_0=0$ and  $s_k=\sum_{i=1}^k \ell_i$, for $1\leq k\leq r$. 
This way the interval  $\ui=[0,1)$ is partitioned into $r$ subintervals 
$\ui_k=[s_{k-1},s_k)$, $1\leq k\leq r$. Define  $T=T(L,\pi)$  by the formula
\begin{equation}\label{eq:iets}
T(x)=x-\sum_{i<k} \ell_i\,+\!\!\sum_{\pi(i')<\pi(k)}\!\! \ell_{i'},
\quad \text{for }  x\in\ui_k.
\end{equation}
A map  $T$  constructed  this way is called the  $(L,\pi)$-IET (sometimes, less 
informatively,  an  $r$-IET or just an IET).
It exchanges  the intervals  $\ui_k$  in accordance with permutation  $\pi$.
It follows from the fact that IETs are invertible piecewise isomertries that $\lam\in\MM(T)$ for every IET  $T$.
(Every IET preserves the Lebesgue measure). If $\MM(T)=\{\lam\}$, a singleton,
$T$ is called uniquely ergodic. (The equivalent condition is that all orbits of $T$
are uniformly distributed). An IET $T$ is called ergodic if it is $\lam$-ergodic.
  

A permutation        $\pi\in S^r$  is called irreducible if for every                      \mbc{1.43}
$k\in J_{r-1}=\{1,2,\ldots,r-1\}$  there exists  $i\in J_{r-1}$  such that  
$i\leq k<\pi(i)$.  An IET   $T$  is called minimal if every orbit is dense.  
For  $n\geq2$, the irreducibility of  $\pi$  is a necessary condition for the 
minimality  of  $T=(L,\pi)$. On the other hand, for  irreducible  $\pi\in S_r$,  
linear independence of the  $r$  entries of  $L$  (over $\Q$)  is a sufficient 
condition for minimality  of  $(L,\pi)$,  see  \cite{IET}.  Thus  $(L,\pi)$  is 
minimal for irreducible  $\pi$  and Lebesgue almost all  $L$.

Every $\lam$-ergodic IET must be minimal but the converse does not hold. A minimal IET        \mbc{1.44}
does not need to be uniquely ergodic or even $\lam$-ergodic \cite{KN} and \cite{nonue}. Nevertheless, ``most'' 
minimal IETs are uniquely ergodic. More precisely, for an irreducible permutation $\pi$ 
the IET  $(L,\pi)$ is uniquely ergodic for Lebesgue almost all  $L$. This result 
(often referred to as Keane's conjecture) has been proved independently  by  
Mazur \cite{M}  and Veech \cite{gauss}; see also \cite{bosh1} for an elementary proof.  

For $n\geq3$ and irreducible $\pi\in S_r$ which are not rotations\, (the technical  
condition is $\pi(k)+1\neq\pi(k+1)$ for $1\leq k<r$)\, the  $(L,\pi)$-IETs  has been         \mbc{CHECK!}
recently shown to be weakly mixing for Lebesgue a.\,a.\ $L$ (see \cite{AF}). In 
particular, Lebesgue a.\,a.\ $3$-IETs with $\pi(k)=4-k$ are weakly mixing \cite{KaS}.


\subsection{The Connectivity Gauges}
Given an IET  $(\ui,T)$,  recall that\,
$\phi_1(x,y)=\liminf_{n\to \infty}\limits \, n\,|T^n(x)-y|$, for $x,y\in\ui$.   
The following is the central result on the connectivity gauges for IETs.
\begin{thm}\label{thm:phi1mu}
Let $T$  be an IET and assume that $\mu\in\MM(T)$ is a $T$-ergodic measure.                  \mbc{thm:phi1mu}
Then $\phi^*_1(\mu,\mu)=\delta_0$.
\end{thm}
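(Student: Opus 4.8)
The goal is to show that for a $T$-ergodic IET measure $\mu$, we have $\liminf_{n\to\infty} n\,|T^n(x)-y|=0$ for $\mu\ttimes\mu$-a.\,a.\ $(x,y)$. By Theorem~\ref{thm:gsoinf} we already know that $\phi_1$ is $\mu\ttimes\mu$-extreme, so the conclusion is equivalent to ruling out the possibility $\phi_1^*(\mu,\mu)=\delta_\infty$; in other words, it suffices to produce, for $\mu\ttimes\mu$-positively many pairs $(x,y)$ (in fact for positive measure suffices, since then extremality forces full measure), infinitely many $n$ with $n\,|T^n(x)-y|$ small. The plan is to exploit the piecewise-isometry structure of $T$: the key feature is that $T^n$ is, on each of the (at most $rn$ or so) intervals of its continuity partition, a translation; equivalently, $T^n$ maps a partition of $\ui$ into subintervals isometrically onto another partition of $\ui$ into subintervals.

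\textbf{Key steps.} First I would fix a generic point $x$ and consider the orbit $x, Tx, \dots, T^{N-1}x$ for large $N$. Because $T^n$ restricted to the continuity interval of $x$ is an isometry, controlling $|T^n(x)-y|$ for a fixed target $y$ amounts to asking how close the orbit of $x$ comes to $y$ at ``time $n$ with multiplicative tolerance $1/n$.'' The natural tool is a Rokhlin-tower / Kakutani–Rokhlin decomposition for IETs: by the minimality (hence every $\mu$-ergodic IET is minimal) one gets, via Rauzy induction, a sequence of castle decompositions of $\ui$ into finitely many towers over subintervals, where the base intervals have length $\to 0$ and the tower heights $h$ grow. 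The crucial quantitative point is the standard IET estimate that if $J$ is a base of a Rokhlin tower of height $h$, then the tower essentially fills $\ui$, so $|J|\cdot h \asymp 1$, i.e.\ $|J|\asymp 1/h$. Now, given a generic pair $(x,y)$: $y$ lies in some level $T^k(J)$ of the tower, and the orbit of $x$ visits the base $J$ at some time $m$ (by minimality/ergodicity, with good frequency); then at time $n=m+k$ we have $T^n(x)\in T^k(J)$, the same interval of length $\asymp 1/h \le 1/n$ (after choosing the induction step so that the height $h$ is comparable to the return time $n$, which is $\asymp h$). Hence $n\,|T^n(x)-y|\lesssim n\cdot(1/h)\asymp 1$, and refining (taking $y$ and $x$ truly generic so they are deep inside rather than at interval endpoints, and letting the induction depth $\to\infty$) one pushes the bound to $0$. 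Iterating over the sequence of castle decompositions produces infinitely many such $n$, giving $\phi_1(x,y)=0$.

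\textbf{Main obstacle.} The delicate part is the simultaneous matching: I need the orbit of $x$ to enter the base $J$ at a time $m$ that is itself comparable to the tower height $h$ (so that $n=m+k\asymp h$ and the tolerance $1/n\asymp 1/h\asymp |J|$ lines up), \emph{and} I need $y$ to be at a level $k$ of that same tower with $x$'s entry controlled — all for a set of $(x,y)$ of positive (hence, by extremality, full) measure, and for infinitely many induction depths. Handling ``a.\,a.'' rather than ``all'' is what makes the endpoint/measure-zero issues manageable: one discards the $\mu$-null set of points whose orbits hit partition endpoints or base boundaries, uses the ergodic theorem to guarantee that $\mu$-a.\,a.\ $x$ visits each tower base with the expected frequency (so in particular within the first $O(h)$ steps), and uses Fubini so that for a.\,a.\ $x$, a.\,a.\ $y$ is interior to a good level. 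The cleanest route is probably to not literally insist $m\asymp h$ but rather to choose, within the $n$-th stage of a suitable renormalization, the tower so that its base is visited by $x$ before time $\epsilon h$ for the stage's height $h$ — possible for a.\,a.\ $x$ by Birkhoff — and then simply use $|J|\le C/h \le C\epsilon/n$. Alternatively, one can bypass castles entirely: use that for an IET, $\min_{0<n\le N}\|T^n x - y\|$ is $O(1/N)$ for suitable $y$ because the $N$ points $T^n x$ together with the $O(N)$ endpoints of the continuity partition of $T^N$ cut $\ui$ into $O(N)$ intervals, so \emph{some} gap near $y$ is $O(1/N)$ and $y$ is within $O(1/N)$ of some $T^n x$ — then verify this persists for a.\,a.\ $y$ with the time index $n$ itself of order $N$, which again is the place where a short combinatorial/measure argument is needed. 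I expect the write-up to lean on the Rauzy–Veech renormalization machinery to make ``$|J|\asymp 1/h$'' and ``$x$ returns to $J$ within $O(h)$ steps'' precise.
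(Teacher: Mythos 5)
Your overall geometric picture (Rokhlin towers for IETs, base of Lebesgue length at most $1/\text{height}$, matching the orbit of $x$ into the tower level containing $y$) is the right one and is essentially what the paper does via Lemma~\ref{lem:1}. But there is a genuine logical gap at the first step of your plan: you claim that producing a \emph{positive-measure} set of pairs with $\phi_1(x,y)<\infty$ suffices ``since then extremality forces full measure.'' Extremality (Theorem~\ref{thm:gsoinf}) only says that $\phi_1$ takes values in $\{0,\infty\}$ almost surely; it does \emph{not} say that $\phi_1^*(\mu\ttimes\mu)$ is a point mass. Theorem~\ref{thm:chaikaex} is an explicit counterexample to your implication: there is a minimal $4$-IET for which $\phi_1^*(\lam,\lam)=b\delta_0+(1-b)\delta_\infty$ with $b\in[\frac14,\frac34]$, so $\phi_1^{-1}(0)$ has positive but not full measure. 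What rescues the ergodic case is the separate \emph{constancy} (zero--one) result, Corollary~\ref{cor:conn1}(2) (equivalently Corollary~\ref{thm:connpower}), which uses that an IET is a local contraction $\pmod{\mu}$ together with the $T$-ergodicity of $\mu$ acting in both coordinates (Lemma~\ref{lem:singleton}); this is the step your write-up is missing, and it cannot be replaced by extremality alone.

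Once triviality is in hand, a positive-measure bound does suffice, and here the paper's execution avoids the difficulty you correctly flag in your ``a.a.\ $(x,y)$ via Birkhoff'' route (for each stage $k$ the Birkhoff convergence for the indicator of the base $J_k$ only kicks in after a time depending on $x$ and $k$, which need not be $O(h_k)$; and at any single stage only a proportion of pairs bounded away from one is matched, so Fubini alone cannot give full measure). Instead, Lemma~\ref{lem:1} produces a single tower $J,TJ,\dots,T^{N-1}J$ of pairwise disjoint subintervals with $\mu$-mass at least $1/r$ and $\mu(J)$ small; disjointness gives $\lam(J)\le 1/N$. Taking $x$ in the bottom third of the tower and $y$ in the top third yields some $m\in(N/3,N)$ with $T^m(x)$ and $y$ in the same level, hence $m\,|T^m(x)-y|<N\lam(J)\le1$, on a set $C_k$ of $\mu\ttimes\mu$-measure at least $1/(25r^2)$ for every $k$; on $\limsup_k C_k$ one gets $\phi_1\le1$, contradicting $\phi_1^*(\mu,\mu)=\delta_\infty$. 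No genericity of the orbit of $x$ is needed. (A smaller point: a $T$-ergodic $\mu$ need not make $T$ minimal --- it may be carried by a periodic component --- so the reduction to the minimal case must be done as in the paper's opening paragraph rather than asserted.)
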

Recall that $\phi^*_1(\mu,\mu)=\delta_0$  in our notation means that  
$\phi_1(T^nx,y)=0 \!\pmod{\mu\ttimes\mu}$ (Section~\ref{sec:notation}).
\begin{cor}[The case $\mu=\lam$, the Lebesgue measure]\label{cor:phi1lam}
Let $(\ui,T)$ be a $\lam$-ergodic IET. Then $\phi^*_1(\lam,\lam)=\delta_0$.                            \mbc{cor:phi1lam}                       
\end{cor}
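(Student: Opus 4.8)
The plan is to obtain Corollary~\ref{cor:phi1lam} as an immediate specialization of Theorem~\ref{thm:phi1mu}, taking $\mu=\lam$. Since Theorem~\ref{thm:phi1mu} already establishes $\phi^*_1(\mu,\mu)=\delta_0$ for an arbitrary $T$-ergodic measure $\mu\in\MM(T)$, the only work in the corollary is to confirm that the Lebesgue measure $\lam$ is a legitimate choice for $\mu$, i.e.\ that it satisfies the two hypotheses of that theorem.

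First I would verify membership, $\lam\in\MM(T)=\MM(\ui,T)$. This is recorded earlier in Section~\ref{sec:resiets}: every IET is an invertible orientation-preserving piecewise isometry, and consequently every IET preserves Lebesgue measure. Thus $\lam$ is a $T$-invariant Borel probability measure on $\ui$, as required. Second I would invoke the ergodicity requirement of Theorem~\ref{thm:phi1mu}, which asks that the chosen invariant measure be $T$-ergodic; this is precisely the standing hypothesis of the corollary, namely that $(\ui,T)$ is a $\lam$-ergodic IET. (Recall from Section~\ref{sec:notation} that ``$(\ui,T)$ is $\lam$-ergodic'' and ``$\lam\in\MM(T)$ is $T$-ergodic'' are the same statement.)

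With both hypotheses in hand, Theorem~\ref{thm:phi1mu} applies with $\mu=\lam$ and gives directly $\phi^*_1(\lam,\lam)=\delta_0$, which unwinds (per the notational conventions of Section~\ref{sec:notation}) to the assertion that $\phi_1(x,y)=\liminf_{n\to\infty}\limits\, n\,|T^n(x)-y|=0$ for $\lam\ttimes\lam$-a.\,a.\ $(x,y)\in\ui^2$. There is no genuine obstacle in this passage: all the substantive content lies inside the proof of Theorem~\ref{thm:phi1mu}, and the descent to Lebesgue measure is a pure specialization, justified because $\lam$ is the distinguished invariant measure attached to every IET while the $\lam$-ergodicity assumption supplies exactly the ergodicity the theorem demands.
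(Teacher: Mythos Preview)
Your proposal is correct and matches the paper's approach: the corollary is stated as a direct specialization of Theorem~\ref{thm:phi1mu} with $\mu=\lam$, and the paper does not even include a separate proof. Your verification that $\lam\in\MM(T)$ and that the $\lam$-ergodicity hypothesis supplies the needed ergodicity is exactly the (trivial) justification required.
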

\begin{rem}\label{rem:tseng}
A special case of Corollary \ref{cor:phi1lam} (for the irrational  rotations $T$  of the unit circle        \mbc{1.67\\rem:tseng}
$\ui$) is already known \cite{D.Kim}.  
(Irrational rotations are naturally identified with  $2$-IETs).                              \mbc{\Small CHANGE! }
\end{rem}
Note that the ergodicity assumption in Theorem \ref{thm:phi1mu} (and 
Corollary \ref{cor:phi1lam})  is crucial (see Theorem \ref{thm:chaikaex}),       
and the choice of the factor  $s_n=n$ in the formula for  $\phi(x,y)$  is optimal            \mbc{1.68\\p7}
(see Theorem  \ref{thm:noptim}).

The next theorem shows that even the special case of Corollary \ref{cor:phi1lam} does not 
extend to all minimal IETs. Nevertheless, for minimal IETs a weaker assertion 
is possible (see Proposition \ref{thm:phi1m}). 
\begin{thm}\label{thm:chaikaex}
There exists a minimal $4$-IET\, $(\ui,T)$  such that\,
$\phi^*_1(\lam,\lam)=b\delta_{\uvm{.6}0}+(1-b)\delta_{\uvm{.6}\infty}$,                     \mbc{thm:chaikaex\\5}
with some\,  $b\in\left[\frac14,\frac34\right]$.
\end{thm}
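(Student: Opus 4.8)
\textbf{Proof proposal for Theorem \ref{thm:chaikaex}.}

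The plan is to construct the $4$-IET explicitly as a limit along a carefully controlled sequence of Rauzy–Veech (or, more concretely, self-similar) renormalization steps, so that the orbit structure of $T$ alternates between two regimes on a positive-measure set of points. The strategy is to arrange that, for Lebesgue almost every $x$, the sequence of \emph{best return times} $q_k(x)$ to small intervals satisfies one of two behaviors depending on $x$: either infinitely often $q_k \cdot |T^{q_k}(x)-y|$ is forced to be bounded away from $0$ for $\lam$-a.a. $y$ (giving $\phi_1(x,y)=\infty$ there), or infinitely often it is forced small (giving $\phi_1(x,y)=0$). Since Theorem \ref{thm:gsoinf} already tells us $\phi_1$ is $\lam\ttimes\lam$-extreme, i.e. takes only the values $0$ and $\infty$ a.s., the only thing to establish is that \emph{both} values occur on sets of positive measure; the bound $b\in[\frac14,\frac34]$ will then come from a symmetry/self-similarity of the construction forcing the two sets to be comparable in size, rather than from any delicate estimate of the exact value of $b$.

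First I would set up the renormalization data: pick an irreducible non-rotation permutation $\pi$ on four symbols, and choose a sequence of Rauzy–Veech matrices $\{A_k\}$ so that the infinite product defines a uniquely determined length vector $L$ with $T=(L,\pi)$ minimal (minimality is guaranteed as soon as the cocycle is not eventually reducible and the lengths stay rationally independent — cf. the discussion after \eqref{eq:iets}). The matrices $A_k$ will be taken from a finite alphabet of ``block'' matrices: blocks of type $\mathrm{I}$ are nearly-stationary long runs that make the Rokhlin tower over the renormalized interval very tall and thin with controlled distortion, so that on a definite fraction of the tower $n|T^n x - y|$ stays $\gtrsim 1$ for most $y$; blocks of type $\mathrm{II}$ are ``mixing'' blocks that redistribute the columns so that on a definite fraction of the tower $n|T^n x - y|$ can be made $\lesssim \eps_k\to 0$. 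Concatenating long type-$\mathrm I$ stretches at scales $n_1\ll n_3\ll n_5\ll\cdots$ and long type-$\mathrm{II}$ stretches at scales $n_2\ll n_4\ll\cdots$, Birkhoff/tower arguments (each column of each Rokhlin tower has roughly equal measure, by the bounded-distortion choice of $A_k$) show that a positive fraction — between $\frac14$ and $\frac34$ by the balance built into the block lengths — of points $x$ land, infinitely often, in the ``type $\mathrm I$ wins'' part, and a comparable fraction in the ``type $\mathrm{II}$ wins'' part. On the first set $\liminf_n n|T^nx-y|=\infty$ for a.e. $y$; on the second, $\liminf_n n|T^nx-y|=0$ for a.e. $y$. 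Combined with $0/\infty$-extremality from Theorem \ref{thm:gsoinf}, this yields $\phi^*_1(\lam,\lam)=b\delta_0+(1-b)\delta_\infty$ with $b\in[\frac14,\frac34]$.

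The main obstacle — and the step that needs the most care — is the quantitative control of the type $\mathrm I$ blocks: one must verify that when the Rokhlin tower over the $k$-th renormalization interval $I_k$ is very tall, the contribution $n|T^n x - y|$ genuinely stays $\gtrsim 1$ on a fixed fraction of the tower for \emph{most} target points $y$, uniformly as $k\to\infty$, and that this is not destroyed by the subsequent renormalization steps (the later type-$\mathrm{II}$ blocks must not accidentally create small values of $n|T^nx-y|$ at the relevant times $n$). This requires a telescoping estimate showing that once $x$ is ``caught'' in a type-$\mathrm I$ regime at scale $n_{2j+1}$, the relevant return times up to the next regime change stay in a window where the distance lower bound persists; the gap between consecutive scales $n_{2j+1}\ll n_{2j+2}$ must be chosen large enough (depending on the distortion constants of the chosen matrix alphabet) to absorb the transient. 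The type $\mathrm{II}$ direction is comparatively soft: producing small values of $\liminf n|T^nx-y|$ only requires exhibiting, for many $(x,y)$, a single large $n$ with $|T^nx-y|$ much smaller than $1/n$, which a shrinking-target / Borel–Cantelli argument along the renormalization times provides. I would therefore spend the bulk of the proof on the type $\mathrm I$ lower bound and its persistence, and treat the rest by citing the standard Rauzy–Veech machinery and the extremality theorem already proved.
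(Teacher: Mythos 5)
There is a genuine gap, and it is structural rather than technical. Your construction alternates ``type I'' and ``type II'' regimes in time and declares that the set of $x$ landing infinitely often in the type-I part has $\phi_1(x,y)=\infty$ for a.e.\ $y$. But $\phi_1$ is a $\liminf$: to get $\phi_1(x,y)=\infty$ you need $n|T^nx-y|$ to be large for \emph{all} sufficiently large $n$, whereas to get $\phi_1(x,y)=0$ it suffices that the type-II shrinking-target behaviour occur \emph{infinitely often}. If your IET is $\lam$-ergodic (and a Rauzy--Veech construction with bounded-distortion blocks and ``mixing'' blocks will generically be uniquely ergodic), then Theorem~\ref{thm:phi1mu} (via Corollary~\ref{cor:phi1lam}) already forces $\phi^*_1(\lam,\lam)=\delta_0$, i.e.\ $b=1$, so no choice of gap sizes between your scales can rescue the type-I lower bound. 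The conclusion of Theorem~\ref{thm:chaikaex} with $b\le\frac34$ therefore \emph{requires} a minimal but non-$\lam$-ergodic $4$-IET, and the set on which $\phi_1=\infty$ must be built from an invariant (permanent) dichotomy --- which ergodic component a point is generic for --- not from an ``infinitely often'' dichotomy; your two sets of $x$ would in fact overlap on a full-measure set under ergodicity. Your proposed source of the bound $b\in[\frac14,\frac34]$ (a symmetry of block lengths) is likewise not the right mechanism.

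For comparison, the paper's route is: take Keane's minimal, non-uniquely ergodic $4$-IET with ergodic measures $\mu_{sing}$ and $\leb$, and prove by a Borel--Cantelli estimate (using that $\mu_{sing}$-generic points spend essentially all their time in the thin towers $O(I_2^{(k)})$, so the total Lebesgue measure of targets $y$ approached to within $r/n$ after time $N$ is summable) that $\liminf_n n|T^nx-y|=\infty$ for $\mu_{sing}\ttimes\leb$-a.e.\ $(x,y)$. One then renormalizes the length vector by the convex combination $\tfrac12\leb+\tfrac12\mu_{sing}$ of the interval lengths; the resulting IET $S_{1/2}$ has the same symbolic dynamics and two ergodic measures that are both absolutely continuous, supported on disjoint sets $E$, $E^c$ with $\leb(E)=\leb(E^c)=\tfrac12$. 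Theorem~\ref{thm:phi1mu} applied to each ergodic component gives $\phi_1=0$ a.e.\ on $(E\ttimes E)\cup(E^c\ttimes E^c)$, so $b\ge\tfrac12$, while the transported Borel--Cantelli estimate gives $\phi_1=\infty$ a.e.\ on $E\ttimes E^c$, so $1-b\ge\tfrac14$; extremality (Corollary~\ref{cor:gsoinf}) then yields the stated form of $\phi^*_1(\lam,\lam)$. If you want to keep a renormalization-flavoured write-up, the essential ingredient you must add is the failure of Lebesgue ergodicity and the identification of your ``type I'' set with the generic points of the second ergodic measure.
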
 
Clearly, in the above theorem $b=(\lam\ttimes\lam)(\phi_1^{-1}(0))$.\vsp1
\begin{rem}\label{rem:twodelta}
Note that for every \mmps\ $(X,T,\mu)$ and for any $\alp>0$,  the measures  
$\phi^*_\alp(\mu,\mu)$, $\psi^*_\alp(\mu,\mu)$ have the form  $b\del_0+(1-b)\del_\infty$, 
$0\leq b\leq1$, because the gauges $\phi_\alp$, $\psi_\alp$ are extreme 
(see Corollary \ref{cor:gsoinf}, page~\pageref{cor:gsoinf}).\vsp1
\end{rem}

In what follows,  $[r]$ stands for the integer  part of\, $r\in\R$:\,
$[r]=\min\{k\in\Z\mid k\geq r\}$.                                                         \mbc{thm:phi1m\\6}
\begin{prop}\label{thm:phi1m}      
Let\, $(\ui,T)$ be a minimal  $r$-IET. Then \                                             \mbc{1.7}
$  (\lam\ttimes\lam)(\phi_1^{-1}(0))=\phi^*(\lam,\lam)\left(\{0\}\right)\geq 
   \uvm{1.15}{\frac1{[r/2]}}\geq 2/r >0$.                          
\end{prop}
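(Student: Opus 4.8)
The plan is to exploit the fact that a minimal $r$-IET, while not necessarily ergodic, always has at most $[r/2]$ ergodic invariant measures (this is a classical bound, due to Katok and others: the number of ergodic invariant probability measures of a minimal $r$-IET is at most $[r/2]$). Let $\mu_1,\dots,\mu_k$ be the ergodic invariant measures of $T$, with $k\le [r/2]$, so that $\lam=\sum_{j=1}^k c_j\mu_j$ is the ergodic decomposition of $\lam$ with weights $c_j>0$, $\sum_j c_j=1$. Here I use that each $\mu_j$ is automatically $T$-ergodic, and that $T$ being a minimal IET, $\lam$ is $T$-invariant and hence decomposes over these.

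First I would apply Theorem \ref{thm:phi1mu} to each ergodic component: for every $j$, since $\mu_j$ is a $T$-ergodic measure for the IET $T$, we get $\phi_1^*(\mu_j,\mu_j)=\delta_0$, i.e.\ $\phi_1(x,y)=0$ for $\mu_j\ttimes\mu_j$-a.a.\ $(x,y)$. Now write the product measure $\lam\ttimes\lam=\sum_{i,j} c_ic_j\,(\mu_i\ttimes\mu_j)$. The "diagonal" terms $i=j$ contribute total mass $\sum_j c_j^2$, and on each of them $\phi_1=0$ almost everywhere. Therefore
\[
(\lam\ttimes\lam)\big(\phi_1^{-1}(0)\big)\ \ge\ \sum_{j=1}^k c_j^2.
\]
Second, I would estimate $\sum_j c_j^2$ from below using the Cauchy–Schwarz (or power-mean) inequality: since $\sum_j c_j=1$ with $k$ terms, $\sum_j c_j^2\ge 1/k\ge 1/[r/2]$. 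Finally, $[r/2]\le r/2$ gives $1/[r/2]\ge 2/r>0$, which yields the stated chain of inequalities. The identification $(\lam\ttimes\lam)(\phi_1^{-1}(0))=\phi^*(\lam,\lam)(\{0\})$ is just the definition of the pushforward.

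The main obstacle — really the only substantive input beyond Theorem \ref{thm:phi1mu} — is the bound $k\le[r/2]$ on the number of ergodic invariant measures of a minimal $r$-IET; I would cite this (it appears in \cite{KN}, and goes back to work of Katok and Veech on the structure of minimal IETs) rather than reprove it. A secondary point to be careful about is that the ergodic decomposition of the (non-ergodic) Lebesgue measure for a minimal IET is genuinely a finite convex combination of the ergodic invariant measures with strictly positive weights — this is where minimality and the finiteness of the set of ergodic measures are both used, and it is what allows the clean diagonal estimate above. Everything else is the elementary Cauchy–Schwarz step and bookkeeping with product measures.
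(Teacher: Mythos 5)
Your proof is correct, and it is essentially the argument the paper intends: the paper gives no explicit proof of Proposition~\ref{thm:phi1m}, but it invokes exactly the same ingredient (``at most $\frac r2$ non-atomic ergodic measures'' for a minimal $r$-IET) in justifying the neighboring Proposition~\ref{thm:geniets2}, and combining that with Theorem~\ref{thm:phi1mu} on each diagonal piece of the ergodic decomposition of $\lam\ttimes\lam$ plus the Cauchy--Schwarz bound $\sum_j c_j^2\ge 1/k$ is precisely the intended route. No gaps.
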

Denote by  $R_\beta\colon\ui\to\ui$ the $\beta$-rotation map 
$R_\beta(x)=x+\beta\!\pmod 1$. (It can be viewed as a $2$-IET).
\begin{thm}\label{thm:noptim}                                                       
Let  $\bfs=\{s_n\}_1^\infty$  be a scale sequence such that\,                              \mbc{thm:noptim\\7}
$\lim_{n\to\infty}\limits\frac{s_n}n=\infty$. Then there exists an irrational 
$\beta\in\R\!\setminus\!\Q$\,  such that the $\beta$-rotation $T=R_\beta$ satisfies 
$\phi^{*}(\lam,\lam)=\delta_\infty$.
\end{thm}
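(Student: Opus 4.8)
The plan is to construct $\beta$ with a rapidly growing sequence of continued-fraction denominators, tuned so fast that the scale sequence $s_n$ cannot keep up except along those very sparse good times. Recall that for $T=R_\beta$ acting on $\ui$, one has $|T^n(x)-y| = \fracpart{x+n\beta-y}$, and for a fixed $y-x$ the orbit $\{x+n\beta\}$ is the rotation orbit; the quantity $\liminf_n s_n \fracpart{x+n\beta-y}$ is what we must force to be $\infty$ for $\lam\ttimes\lam$-a.a. $(x,y)$. Since $\phi_1$ (hence $\phi$ with scale $\bfs$) is extreme by Theorem \ref{thm:gsoinf}, the measure $\phi^*(\lam,\lam)$ has the form $b\delta_0+(1-b)\delta_\infty$; it therefore suffices to rule out the value $0$ on a positive-measure set, i.e. to show $\phi^*(\lam,\lam)(\{0\})=0$. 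Equivalently, for every $\epsilon>0$ the set of $(x,y)$ with $\liminf_n s_n\fracpart{x+n\beta-y}<\epsilon$ must be $\lam\ttimes\lam$-null.

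First I would choose a sequence $q_1<q_2<\cdots$ of positive integers growing extremely fast, and set $\beta=\sum_k 1/(q_1q_2\cdots q_k)$ (an Ostrowski/Liouville-type construction), so that $\beta$ has continued-fraction convergents $p_k/Q_k$ with $\fracpart{Q_k\beta}\asymp 1/Q_{k+1}$ and with the growth of $Q_{k+1}/Q_k$ at our disposal. The key elementary fact about rotations is the \textbf{three-distance theorem}: for any $N$, the points $\{0,\beta,2\beta,\ldots,(N-1)\beta\}$ partition $\ui$ into arcs taking at most three distinct lengths, each of length $\gtrsim 1/N$ when $N<Q_{k+1}$ for the appropriate $k$. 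Consequently, for $N$ in the range $Q_k\le N<Q_{k+1}$, the value $\min_{1\le n\le N}\fracpart{t+n\beta}$ is, for most $t$, of order at least $c/N$, and more precisely: the set of $t\in\ui$ for which $\min_{1\le n\le N}\fracpart{t+n\beta}<\delta$ has measure $O(N\delta)$. Taking $\delta = \epsilon/s_N$ controls a single "block"; I would then use that $s_n/n\to\infty$ to make $\sum$ of these measures summable along a suitable subsequence of block-endpoints, apply Borel--Cantelli, and conclude that for a.e. $t=y-x$ (and then integrating in $x$, for $\lam\ttimes\lam$-a.e. $(x,y)$) one has $s_n\fracpart{x+n\beta-y}\ge\epsilon$ for all large $n$.

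Carrying this out requires coordinating two things: the blocks $[Q_k,Q_{k+1})$ on which the three-distance estimate gives a clean lower bound $\gtrsim 1/n$ on the gap, and the divergence $s_n/n\to\infty$, which must be strong enough on each block to beat the $\epsilon$ but whose rate we do not control. The fix is to pick the $q_k$ (hence $Q_k$) \emph{after} seeing $\bfs$: on the block $[Q_k,Q_{k+1})$ we only need $s_n\ge n/\eta_k$ to fail, i.e. we need $\inf_{n\ge Q_k} s_n/n$ to exceed a threshold that we can arrange by taking $Q_k$ large, since $s_n/n\to\infty$ means $\inf_{n\ge Q}s_n/n\to\infty$ as $Q\to\infty$. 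So choose $Q_k$ inductively large enough that $s_n/n > 2^k/\epsilon$ for all $n\ge Q_k$; then on the $k$-th block the "bad" set has measure $O(Q_{k+1}\cdot \epsilon 2^{-k}/Q_{k+1}) = O(\epsilon 2^{-k})$ after the three-distance count, which is summable. \textbf{The main obstacle} is precisely this interleaving — making the three-distance lower bound $\fracpart{t+n\beta}\gtrsim 1/n$ hold uniformly enough across each entire block (not just near its left endpoint), and handling the three, rather than two, gap lengths and the slightly delicate behavior when $n$ is close to $Q_{k+1}$; a careful but standard application of the three-distance theorem (or equivalently of the Ostrowski expansion of $t$ relative to $\beta$) resolves it. Finally, extremality (Theorem \ref{thm:gsoinf}) upgrades "$\{0\}$ has measure zero" to "$\phi^*(\lam,\lam)=\delta_\infty$", completing the proof.
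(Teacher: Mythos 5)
Your overall architecture matches the paper's: reduce to the difference variable $t=y-x$, build $\beta$ from its continued fraction with the denominators $q_k$ chosen \emph{after} seeing $\bfs$ so that $s_n/n$ exceeds a prescribed threshold for all $n\geq q_k$, split time into blocks $[q_k,q_{k+1})$, bound the measure of the bad set on each block, and finish with Borel--Cantelli. The gap is in the one step carrying real content: the block estimate. The bad set for block $k$ is $\bigcup_{n=q_k}^{q_{k+1}-1}B\bigl(n\beta,\epsilon/s_n\bigr)$, a union of balls whose radii depend on $n$; your count $O(N\delta)=O\bigl(q_{k+1}\cdot\epsilon 2^{-k}/q_{k+1}\bigr)$ silently replaces every radius by the one at the right endpoint $n\approx q_{k+1}$. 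For $n$ near the left endpoint the true radius $\epsilon/s_n\approx \epsilon 2^{-k}/q_k$ is larger by the factor $q_{k+1}/q_k=a_{k+1}$, which you do \emph{not} control: it is forced to be arbitrarily large because $q_{k+1}$ must exceed a threshold dictated by how slowly $s_n/n\to\infty$ (and one cannot retreat to a badly approximable $\beta$: by Kurzweil-type results such $\beta$ admit infinitely many solutions of $\fracpart{n\beta-y}<\epsilon/s_n$ for a.e.\ $y$ whenever $\sum 1/s_n=\infty$, e.g.\ $s_n=n\log\log n$). The honest union bound is $\sum_{n=q_k}^{q_{k+1}}2\epsilon/s_n\lesssim \epsilon 2^{-k}\log(q_{k+1}/q_k)$, whose logarithmic factor is unbounded and unsummable. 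The three-distance input you invoke is also false in the regime where you need it: for $q_k\ll N\ll q_{k+1}$ the points $\{n\beta\}_{n<N}$ have many gaps of size $\fracpart{q_k\beta}<1/q_{k+1}\ll 1/N$, not $\gtrsim 1/N$; and in any case gap \emph{lower} bounds are not what an \emph{upper} bound on the measure of a union of balls of varying radii requires.

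The missing idea --- the paper's $S_4$ term --- is to exploit near-periodicity rather than separation: since $\fracpart{q_k\beta}<1/q_{k+1}$, for $2q_k\leq n\leq m_k:=\intpart{q_{k+1}/k^2}$ one has $\lam\bigl(B_n\setminus B_{n-q_k}\bigr)\leq\fracpart{q_k\beta}<1/q_{k+1}$, so the union over the whole middle of the block adds at most $m_k/q_{k+1}\leq k^{-2}$ to the measure of the first $2q_k$ balls (which the crude bound handles, there being only $O(q_k)$ of them). The tail $n\in[m_k,q_{k+1}]$ is then covered by the crude bound precisely because there $s_n\geq s_{m_k}\geq k^4 m_k\approx k^2 q_{k+1}$. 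Your parenthetical appeal to ``the Ostrowski expansion of $t$ relative to $\beta$'' gestures at something that could be made to work, but as written the proposal's central inequality is not a valid upper bound, and the mechanism that rescues the middle of each block is absent.
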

Recall that $\phi^*(\lam,\lam)=\delta_{\uvm{.6}\infty}$ means that  
$\phi(x,y)=\liminf_{n\to \infty}\limits\, s_n\,d(T^n x,y))=\infty \pmod{\lam\toto}$.
  
Note that, under the conditions of Theorem \ref{thm:noptim}, the relation
$\phi^{*}_1(\lam,\lam)=\delta_0$ holds (by Corollary \ref{cor:phi1lam})  even though  
\mbox{$\phi^{*}(\lam,\lam)=\delta_\infty$}. 
It follows that in Proposition \ref{thm:phi1mu} and Corollary \ref{cor:phi1lam} 
the factor $n$  (underlined in the formula\, 
$\liminf_{n\to\infty}\limits\, \underline n\,|T^nx-y|$\, for~$\phi_1(x,y)$)
cannot be replaced by one  approaching infinity faster.\vsp2

\begin{rem}\label{rem::noptim}
The conclusion of Theorem \ref{thm:noptim} can be presented in the following equivalent   \mbc{1.73}
form: There exists an irrational $\alp\in\ui=[0,1)$  such that
$\lim_{n\to\infty}\limits s_n\fracpart{n\alp-y}=\infty$  for $\lam$-a.\,a. $y\in\ui$.
\end{rem}

To put Proposition \ref{thm:phi1mu} and Theorems \ref{thm:chaikaex} and \ref{thm:phi1m}
in perspective, we state the following three general propositions. 
\begin{prop}\label{thm:genlam} Let  $\bfs=\{s_n\}_1^\infty$  be a scale sequence,          \mbc{thm:genlam\\8}
let $(\ui,T)$ be a metric system (on the unit interval). \mbox{Let~$\mu\in\MM(\ui)$.} 
Then the gauge\, $\phi(x,y)=\liminf_{n\to\infty}\limits\, s_n|d(T^nx,y)|$ is extreme 
relative to the measure\,  \mbox{$\mu\times\lam\in\MM(\ui^2)$}  (i.\,e., 
$\hat\phi(\mu,\lam)\subset\{0,\infty\}$). 
\end{prop}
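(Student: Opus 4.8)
The plan is to reduce the statement to a one-dimensional question about a single orbit and then apply a Fubini/density argument together with a Borel--Cantelli type estimate. Fix $x\in\ui$; write $a_n = T^n(x)\in\ui$, so the sequence $\{a_n\}$ depends on $x$ but is now a fixed sequence of points in the circle $\ui$. For this fixed $x$, consider the section $y\mapsto \phi(x,y)=\liminf_{n\to\infty} s_n\,d(a_n,y)$. The key claim is that for \emph{every} fixed sequence $\{a_n\}\subset\ui$ and every scale sequence $\bfs$, the set $E=\{y\in\ui : 0<\phi(x,y)<\infty\}$ has Lebesgue measure zero. Granting this claim, Fubini applied to the measurable set $\phi^{-1}((0,\infty))\subset\ui^2$ gives $(\mu\ttimes\lam)(\phi^{-1}((0,\infty)))=\int_\ui \lam(\{y:0<\phi(x,y)<\infty\})\,d\mu(x)=0$, which is exactly the assertion $\hat\phi(\mu,\lam)\subset\{0,\infty\}$.

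To prove the claim I would split $E$ into two pieces and show each is null. First, let $E_\infty=\{y : \phi(x,y)<\infty\}$, i.e. the set of $y$ for which $s_n\,d(a_n,y)\leq C$ infinitely often for some finite $C$. Writing $E_\infty = \bigcup_{C\in\N}\LS \{y: s_n d(a_n,y)\le C\}$, each inner event is an interval (arc) around $a_n$ of length $2C/s_n\to 0$; since $\sum_n 2C/s_n$ need \emph{not} converge, Borel--Cantelli does not directly give nullity of $E_\infty$ — and indeed $E_\infty$ can have full measure. So splitting off $E_\infty$ is the wrong move; instead I would work directly with $E$. The right approach: for rational $q>1$ and $C<\infty$, consider
\[
E_{C,q}=\Big\{y\in\ui : \tfrac{C}{q}\le \liminf_{n\to\infty} s_n\,d(a_n,y)\le C\Big\},
\]
so that $E=\bigcup_{C,q} E_{C,q}$ over countably many pairs, and it suffices to bound $\lam(E_{C,q})$. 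For $y\in E_{C,q}$, on one hand $s_n d(a_n,y)\ge C/q - o(1)$ for all large $n$, so $y$ avoids the arc of radius roughly $(C/q)/s_n$ about $a_n$ for all large $n$; on the other hand $s_n d(a_n,y)\le C+o(1)$ infinitely often, so $y$ lies in the arc of radius roughly $C/s_n$ about $a_n$ for infinitely many $n$. Thus $E_{C,q}\subset \LS A_n$ where $A_n$ is an \emph{annular} arc about $a_n$ of total length $\le 2(C - C/q)/s_n + o(1/s_n) = \tfrac{2C}{s_n}(1-\tfrac1q)+o(1/s_n)$, while the constraint forces $y\notin$ (inner arc of radius $(C/q - \eps)/s_m$ about $a_m$) for all large $m$ — this is the mechanism that will kill the measure, because the liminf being bounded below by $C/q$ is a strong restriction.

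The cleanest way to extract nullity is a direct density estimate rather than Borel--Cantelli: I would argue that any Lebesgue density point $y_0$ of $E_{C,q}$ leads to a contradiction. At scale $\delta$, pick $n$ with $s_n\approx C/\delta$ so the arc $I_n$ of radius $2C/s_n\approx 2\delta$ about $a_n$ has $d(a_n,y_0)$ small relative to $\delta$ for infinitely many such $n$ (using $\liminf\le C$); but then points $y$ near $y_0$ at distance $\sim \delta/2$ on the appropriate side have $s_n d(a_n,y)$ strictly between $C/q$ and $C$ forbidden... — here one has to be careful, so the honest route is: fix $\eps>0$ small; for $y\in E_{C,q}$ there are infinitely many $n$ with $d(a_n,y)\in[\tfrac{C/q-\eps}{s_n},\tfrac{C+\eps}{s_n}]$, and \emph{for all large $n$}, $d(a_n,y)\ge \tfrac{C/q-\eps}{s_n}$. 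Comparing $y$ and $y'=y+t$ for $|t|\le \eps/s_n$: if $y\in E_{C,q}$ realizes the liminf via index $n$ with $d(a_n,y)$ near the lower end $C/q\cdot s_n^{-1}$, then $y'$ can have $s_n d(a_n,y') < C/q-\eps$, pushing $y'$ out of $E_{C,q}$ — this gives that $E_{C,q}$, intersected with any small interval, misses a definite proportion, hence $\lam(E_{C,q})=0$. \textbf{The main obstacle} is making this last comparison uniform: the indices $n$ realizing the near-minimal value depend on $y$, so one must pass to a subset of positive measure on which a common scale works (a pigeonhole on the dyadic scale of $s_n$), and then run the density argument there; this is exactly the kind of single-orbit bookkeeping that underlies the EGT, and I expect the write-up to mirror that structure. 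Once $\lam(E_{C,q})=0$ for all $C,q$, summing over the countable family and then integrating in $x$ via Fubini completes the proof.
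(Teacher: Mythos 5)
Your proposal follows essentially the same route as the paper: the paper first proves that the Lebesgue measure is ``decisive'' (Proposition \ref{prop:zinfui}) --- i.e.\ that for any fixed sequence $\{x_n\}$ and any scale sequence the set $\{y: 0<\liminf_{n\to\infty} s_n|x_n-y|<\infty\}$ is Lebesgue-null, by decomposing it into the countable family $E(a,N)=\{y: a<\omega(y)<2a,\ \inf_{n>N}s_n|x_n-y|>a\}$ and showing no point of $E(a,N)$ is a Lebesgue density point (for each $n>N$ in the infinite set where $s_n|x_n-y|<2a$, the ball $B_y(3a/s_n)$ contains the excluded sub-ball $B_{x_n}(a/s_n)$ of one third its measure) --- and then applies Fubini exactly as you do (Theorem \ref{thm:gendec}). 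The uniformization you flag as the ``main obstacle'' is resolved there simply by the additional countable union over the threshold $N$, with no pigeonhole on scales needed; with that adjustment your density argument for the sets $E_{C,q}$ closes just as in the paper.
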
\vsp{.5}
Note that in Proposition \ref{thm:genlam} no assumptions are imposed on the scale sequence
$\bfs$,  and the measures $\mu$, $\lam$ are not assumed to be $T$-invariant (cf.\ EGT, 
Theorem \ref{thm:egt}). Proposition \ref{thm:genlam} is a special case of 
Theorem \ref{thm:gendec} (presented and proved in Section \ref{sec:dec}).                 \mbc{1.75}
\begin{prop}\label{thm:geniets2}                                                         
Let  $\bfs=\{s_n\}_1^\infty$  be a steady scale sequence, let  $(\ui,T)$ be an $r$-IET    \mbc{thm:geniets2}
and let $\mu,\nu\in\MM(T)$ (be invariant measures). Then the set\,
$\hat\phi(\mu,\nu)\subset[0,\infty]$  must be finite.
\end{prop}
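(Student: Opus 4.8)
The plan is to exploit two invariance properties of the connectivity gauge $\phi(x,y)=\liminf_{n\to\infty}s_n\,d(T^nx,y)$ that hold because $\bfs$ is steady and $T$ is piecewise an isometry, and then to run an ergodic-decomposition argument. \emph{First}, for all $x,y\in\ui$ one has $\phi(Tx,y)=\phi(x,y)$: shifting the index, $\phi(Tx,y)=\liminf_m s_{m-1}d(T^mx,y)=\liminf_m\frac{s_{m-1}}{s_m}\,s_m\,d(T^mx,y)$, and the factor $\frac{s_{m-1}}{s_m}\to1$ may be deleted from the $\liminf$ (if $a_m\to1$, $b_m\ge0$, then $\liminf a_mb_m=\liminf b_m$). \emph{Second}, for all $x\in\ui$ and all $y$ outside the finite set $F$ of points near which $T$ or $T^{-1}$ fails to be a local isometry (the partition points of $T$ and of $T^{-1}$) one has $\phi(x,Ty)=\phi(x,y)$. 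For ``$\le$'' one may assume $\phi(x,y)<\infty$, pick $n_k$ realizing the $\liminf$, note that $s_{n_k}\to\infty$ forces $d(T^{n_k}x,y)\to0$, so $T^{n_k}x$ eventually enters the neighbourhood of $y$ on which $T$ is an isometry, whence $d(T^{n_k+1}x,Ty)=d(T^{n_k}x,y)$ and $s_{n_k+1}d(T^{n_k+1}x,Ty)=\frac{s_{n_k+1}}{s_{n_k}}s_{n_k}d(T^{n_k}x,y)\to\phi(x,y)$; ``$\ge$'' is symmetric, using $T^{-1}$ near $Ty$. Iterating, $\phi(x,T^my)=\phi(x,y)$ for all $m\ge0$ once $y\notin\bigcup_{m\ge0}T^{-m}F$, a countable set.

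Next I would split according to periodicity. Let $P\subset\ui$ be the ($T$-invariant, Borel) set of periodic points of $T$ and $M=\ui\setminus P$, and write $\mu=\mu_P+\mu_M$, $\nu=\nu_P+\nu_M$ for the restrictions (each a finite $T$-invariant measure). Then $\phi^*(\mu\ttimes\nu)$ is the sum of the four products $\phi^*(\mu_\bullet\ttimes\nu_\bullet)$, so $\hat\phi(\mu,\nu)$ is the union of their supports and it suffices to bound each. If $x\in P$, then $\{T^nx\}_n$ is a finite set traversed periodically, so $d(T^nx,y)$ takes finitely many values and, since $s_n\to\infty$, $\phi(x,y)$ equals $0$ when $y$ lies on the orbit of $x$ and $\infty$ otherwise; hence $\phi^*(\mu_P\ttimes\nu)$ is supported in $\{0,\infty\}$, which settles the two terms involving $\mu_P$.

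Atoms of a $T$-invariant measure sit on periodic orbits, so $\mu_M$ and $\nu_M$ are non-atomic; by the structure theory of IETs their ergodic components lie among the finitely many non-atomic ergodic $T$-invariant probability measures $\lambda_1,\dots,\lambda_N$, each carried by a minimal component $K_i$ — an infinite minimal set, hence containing no periodic point (see \cite{IET}). Write $\mu_M=\sum_ia_i\lambda_i$, $\nu_M=\sum_jb_j\lambda_j$. For periodic $y$ one has $d(y,K_i)>0$, so $\phi(x,y)=\infty$ for $\lambda_i$-a.e.\ $x$, and $\phi^*(\mu_M\ttimes\nu_P)$ is supported in $\{\infty\}$. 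It remains to treat $\phi^*(\mu_M\ttimes\nu_M)=\sum_{i,j}a_ib_j\phi^*(\lambda_i\ttimes\lambda_j)$, a finite combination. If $K_i\cap K_j=\emptyset$ then $d(T^nx,y)$ is bounded below and $\phi\equiv\infty$ a.e.; otherwise, two minimal sets being disjoint or equal, $K_i=K_j$, and then the first invariance together with ergodicity of $\lambda_i$ shows $\phi$ is $\lambda_i\ttimes\lambda_j$-a.e.\ a function of $y$ alone, while the second invariance together with ergodicity of $\lambda_j$ (non-atomic, so $\lambda_j(\bigcup_mT^{-m}F)=0$) shows it is a.e.\ a function of $x$ alone; a measurable function on a product space that is a.e.\ a function of each coordinate separately is a.e.\ constant, so $\phi^*(\lambda_i\ttimes\lambda_j)$ is a single Dirac mass. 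Combining, $\hat\phi(\mu,\nu)$ is a union of finitely many finite sets.

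The genuinely substantive point is the second invariance: it is the one place the piecewise-isometry structure of $T$ enters, via the observation that an optimizing subsequence $T^{n_k}x$ must converge to $y$. The remaining difficulty is really bookkeeping: the ergodic decompositions of $\mu$ and $\nu$ may involve infinitely many periodic-orbit measures, which is exactly why the split into $P$ and $M$ is needed — $\phi$ is already $\{0,\infty\}$-valued as soon as $x$ is periodic, and is $\infty$ as soon as $y$ is periodic while $x$ lives on a minimal set avoiding $y$, so no infinitude of values can arise there, and the only place ergodicity of \emph{both} measures is genuinely used (forcing a single value) is the interaction of two ergodic measures on a common minimal component.
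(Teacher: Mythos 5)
Your proposal follows essentially the same route as the paper: decompose $\mu$ and $\nu$ into ergodic components, show that $\phi^*$ of a pair of ergodic components is a Dirac mass via invariance of $\phi$ under $T$ in each variable separately (this is the paper's Proposition~\ref{prop:connectivity}(1),(2), which uses the one-sided inequalities $\phi(Tx,y)\le\phi(x,y)$ and $\phi(x,Ty)\le D_T(y)\phi(x,y)$ together with the fact that an IET is a local contraction modulo any continuous measure) combined with the Fubini-type Lemma~\ref{lem:singleton}, and then invoke the finiteness of the set of non-atomic ergodic measures of an $r$-IET. Your explicit treatment of the periodic part, which the paper's one-line derivation from Proposition~\ref{thm:ergiets} leaves implicit, is welcome, and your ``second invariance'' (an equality rather than the paper's one-sided inequality) is correctly obtained by the optimizing-subsequence argument.

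One intermediate claim is false as stated: for a periodic $y$ it need not be true that $d(y,K_i)>0$. A minimal component $K_i$ is a finite union of half-open intervals $[a,b)$, and such a right endpoint $b$ may belong to an adjacent periodic component: take, for instance, a $3$-IET whose restriction to $[0,\tfrac12)$ is an irrational rotation and whose restriction to $[\tfrac12,1)$ is the identity; then $y=\tfrac12$ is a fixed point lying in $\overline{K_i}\setminus K_i$. For such $y$ the quantity $\phi(x,y)=\liminf_n s_n\,d(T^nx,y)$ is a genuine shrinking-target quantity and can be finite (for $s_n=n$ it is bounded by Chebyshev-type estimates), so $\phi^*(\mu_M\ttimes\nu_P)$ need not be supported in $\{\infty\}$. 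The final conclusion is unharmed: the exceptional periodic points form the finite set $\bigcup_i\bigl(\overline{K_i}\setminus K_i\bigr)$, each such atom of $\nu$ (together with its finite orbit) contributes a single value of $\phi$ by your first invariance plus ergodicity of $\lambda_i$, and every other periodic $y$ --- whether an atom or generic for a continuous measure carried by a periodic component --- does satisfy $d(y,K_i)>0$ and hence gives $\phi\equiv\infty$. So the step should read ``for all periodic $y$ outside a finite exceptional set,'' with the finitely many exceptions absorbed into the final count.
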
\vspm1
This follows from Proposition \ref{thm:ergiets} and the fact that there are at most $\frac r 2$ non-atomic ergodic measures.

Note that if $\bfs$ is two-jumpy then the cardinality of $\hat\phi(\mu,\nu)$ does not 
exceed $2$ because of the inclusion $\hat\phi(\mu,\nu)\subset\{0,\infty\}$ holds          
by Theorem \ref{thm:gsoinf}.
\begin{prop}\label{thm:ergiets}                                                         
Let  $\bfs=\{s_n\}_1^\infty$  be a steady scale sequence, let  $(\ui,T)$ be an IET and    \mbc{thm:ergiets\\10}
let $\mu,\nu\in\MM(T)$ be two ergodic measures. Then the measure $\phi^{*}(\mu,\nu)$
is atomic, i.\,e.\ the set\, $\hat\phi(\mu,\nu)$  must be a singleton in $[0,\infty]$.    \mbc{1.8}
\end{prop}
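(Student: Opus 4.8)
The plan is to show that the gauge $\phi(x,y)=\liminf_n s_n\,d(T^nx,y)$ is $\mu\ttimes\nu$-constant whenever $\bfs$ is steady and $\mu,\nu$ are ergodic for the IET $T$. The key structural feature of IETs to exploit is that $T$ is a piecewise isometry: off the finite set of discontinuity points, $d(T^nx,T^ny)$ is eventually locally constant, and more importantly $T$ acts by local translations. First I would reduce to showing that $\phi$ is $T\ttimes T$-invariant $\pmod{\mu\ttimes\nu}$, for then, since $\mu\ttimes\nu$ need not be $T\ttimes T$-ergodic, I instead argue directly that $\phi$ is invariant under the ``relative translation'' action and use ergodicity of each factor separately. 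The cleanest route: fix $y$ and study $\phi_y(x)=\liminf_n s_n\,|T^nx - y|$ (identifying $\ui$ with the circle); I claim $\phi_y(Tx)=\phi_y(x)$ for $\mu$-a.a.\ $x$. Since $T$ is invertible and $s_{n+1}/s_n\to 1$ (steadiness), we have $s_{n}|T^{n+1}x-y| = \frac{s_n}{s_{n+1}}\,s_{n+1}|T^{n+1}x-y|$, so $\liminf_n s_n|T^n(Tx)-y| = \liminf_m s_{m-1}|T^m x - y| = \liminf_m \frac{s_{m-1}}{s_m} s_m |T^mx-y| = \phi_y(x)$. Hence $\phi_y$ is genuinely $T$-invariant (everywhere, not just a.e.), so by ergodicity of $\mu$ it is $\mu$-a.s.\ equal to a constant $c(y)\in[0,\infty]$.

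Next I would show $c(y)$ is independent of $y$ for $\nu$-a.a.\ $y$. The natural move is to run the same argument in the second coordinate: consider $\Phi(x,y)=\phi(x,y)$ and show $\Phi(x, Ty) = \Phi(x,y)$ for a.a.\ pairs. Here one uses that $T$ near $y$ is a translation by some local constant $\tau$ (depending on which exchanged interval $y$ lies in): for all large $n$, on a full-measure set, $d(T^nx, Ty) = d(T^nx + (\text{nothing}), y+\tau)$... this requires care because $T^nx$ ranges over the whole interval, not a neighborhood of $y$. The correct fix is: replace ``$Ty$'' by comparing $\liminf_n s_n d(T^nx, y)$ with $\liminf_n s_n d(T^{n}x, Ty)$ via the substitution $T^n x = T^{n-1}(Tx)$ in the \emph{first} slot again — but that changes $x$, not $y$. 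Instead I would argue: the set $\{y : c(y)=c_0\}$ is, for each value $c_0$, a $T$-invariant set (because $\phi(x,Ty)$ and $\phi(x,y)$ are related by shifting the orbit of $x$, which is a measure-preserving bijection on the $x$-variable), hence by ergodicity of $\nu$ it is $\nu$-null or $\nu$-conull; since the values partition $\ui$, exactly one value $c_0$ has full $\nu$-measure. Therefore $\phi = c_0 \pmod{\mu\ttimes\nu}$, i.e.\ $\phi^*(\mu,\nu)=\delta_{c_0}$, which is the claim.

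The main obstacle I anticipate is making rigorous the claim that $\{y: c(y)=c_0\}$ is $T$-invariant — equivalently, that $c(Ty)=c(y)$. The honest statement is that for fixed $x$ outside a countable set, $d(T^n x, Ty)$ and $d(T^nx,y)$ differ in a way controlled by the local translation structure of $T$ at $y$; but since we take $\liminf$ over $n$ and the approximants $T^n x$ equidistribute (or at least visit every neighborhood of $y$, by minimality, which ergodicity forces), one shows $\liminf_n s_n d(T^nx, Ty) = \liminf_n s_n d(T^{n-1}x, y)$ by writing $Ty = \lim$ of $T$ applied near $y$ and absorbing the translation into reindexing the orbit of $x$. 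This reindexing step — and checking it is legitimate $\mu\ttimes\nu$-a.e.\ despite the finitely many discontinuities of $T$ — is where the piecewise-isometry hypothesis and steadiness of $\bfs$ both get used, and is the delicate heart of the argument. Everything else (applying Fubini to pass from ``$\phi_y$ is $\mu$-a.s.\ constant for each $y$'' to a jointly measurable selection $c(y)$, and invoking ergodicity twice) is routine.
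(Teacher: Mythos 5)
Your outline is, in skeleton, exactly the paper's argument (Proposition \ref{prop:connectivity}(1)--(2) combined via Lemma \ref{lem:singleton}): fix $y$ and use steadiness plus ergodicity of $\mu$ to make $x\mapsto\phi(x,y)$ a.s.\ constant; fix $x$ and use ergodicity of $\nu$ to make $y\mapsto\phi(x,y)$ a.s.\ constant; then Fubini forces a single common constant. Your first step is carried out correctly (you even get equality $\phi(Tx,y)=\phi(x,y)$, where the paper only needs $\leq$), and the Fubini step is indeed routine.

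The gap is the second step, which you correctly flag as ``the delicate heart'' but do not close, and your first attempted justification for it is not valid: replacing $x$ by $Tx$ (a measure-preserving change in the \emph{first} variable) relates $\phi(Tx,y)$ to $\phi(x,y)$ and says nothing about $\phi(x,Ty)$ versus $\phi(x,y)$, so it cannot show that $\{y:c(y)=c_0\}$ is $T$-invariant. The paper's closure is simpler than what you sketch and needs neither equidistribution nor minimality, nor an exact equality: one only needs the \emph{one-sided} inequality $\phi(x,Ty)\leq\phi(x,y)$ for $\nu$-a.a.\ $y$, since a $\nu$-a.e.\ sub-invariant function of an ergodic measure-preserving map is a.e.\ constant (the sublevel sets $\{y:\phi(x,y)<t\}$ are forward-invariant mod $\nu$, hence invariant mod $\nu$ by measure preservation, hence of measure $0$ or $1$). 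That inequality follows from the dilation gauge: writing $s_n d(T^nx,Ty)=\frac{s_n}{s_{n-1}}\cdot\frac{d(T(T^{n-1}x),Ty)}{d(T^{n-1}x,y)}\cdot s_{n-1}d(T^{n-1}x,y)$ and noting that along any subsequence realizing a finite $\phi(x,y)$ one has $T^{n-1}x\to y$, the middle ratio is eventually controlled by $D_T(y)\leq 1$, which holds for every $y$ off the finite set of discontinuities of the IET (so $\nu$-a.e.\ for continuous $\nu$; the atomic ergodic case, a periodic orbit, is handled separately and trivially). Steadiness kills the factor $\frac{s_n}{s_{n-1}}$. So your final paragraph names the right mechanism; what is missing is the observation that the a.e.\ inequality $\phi(x,Ty)\leq\phi(x,y)$ already suffices, which turns the ``delicate'' step into two lines.
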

Note that the claim of the above theorem holds in a more general setting of 
\mmps\ which are local contractions (see Theorem \ref{thm:conn1} in Section 
\ref{sec:general}). The next proposition shows that under certain assumption 
on  $\bfs$ the conclusion of Proposition \ref{thm:ergiets} can be strengthened.
\begin{prop}\label{thm:ergiets2}                                                         
Let  $\bfs=\{s_n\}_1^\infty$  be a nice scale sequence, let  $(\ui,T)$ be an IET and       \mbc{thm:ergiets\\11}
let $\mu,\nu\in\MM(T)$ be two ergodic measures. Then 
$\phi^{*}(\mu,\nu)\in\{\delta_0, \delta_\infty\}$.    										   \mbc{1.8}
\end{prop}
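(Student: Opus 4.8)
The plan is to establish the two facts that together yield the statement: that the measure $\phi^*(\mu,\nu)$ on $[0,\infty]$ is (i) carried by $\{0,\infty\}$ and (ii) atomic. Fact~(i) is immediate from Theorem~\ref{thm:gsoinf}: a nice scale sequence is in particular two-jumpy, and $\mu,\nu$, being $T$-ergodic, are $T$-invariant, so $\phi$ is $\mu\ttimes\nu$-extreme. Fact~(ii) is the conclusion of Proposition~\ref{thm:ergiets} (atomicity of $\phi^*(\mu,\nu)$), which I would re-derive here for nice --- rather than steady --- scale sequences; granted both (i) and (ii), the atom must be $\delta_0$ or $\delta_\infty$, which is the assertion. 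I would prove (ii) by a two-stage ``fibered'' ergodicity argument: first fix the second coordinate and use $\mu$-ergodicity, then use $\nu$-ergodicity on the resulting fiberwise constants.

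Stage~1 (constancy in $x$). Fix $y\in\ui$ and write $\phi_y(x)=\phi(x,y)$. A nice sequence is monotone, so $s_{m-1}\le s_m$ for large $m$ and hence $\phi_y(Tx)=\liminf_{m\to\infty}\limits s_{m-1}|T^m x-y|\le\liminf_{m\to\infty}\limits s_m|T^m x-y|=\phi_y(x)$. A measurable $[0,\infty]$-valued function that does not increase under $T$ is almost surely constant with respect to any $T$-ergodic measure --- each sublevel set is contained in its own $T$-preimage, hence, having the same measure, is invariant modulo that measure. Applied to $\mu$ this produces, for every fixed $y$, a constant $c(y)\in[0,\infty]$ with $\phi(x,y)=c(y)$ for $\mu$-a.a.\ $x$; and by Fact~(i) together with Fubini, $c(y)\in\{0,\infty\}$ for $\nu$-a.a.\ $y$.

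Stage~2 (constancy in $y$). Here I would use that $T$ is an IET: on each of its finitely many continuity intervals $T$ acts by a translation, so $|Tz-Tw|=|z-w|$ whenever $z,w$ lie in one such interval, and the same holds for $T^{-1}$ on each of its continuity intervals; let $D$ be the finite set of endpoints of all these intervals. I claim $c(Ty)\le c(y)$ for every $y\notin D\cup T^{-1}(D)$. To see this, pick $x$ in the $\mu$-full set on which both $\phi(\cdot,Ty)\equiv c(Ty)$ and $\phi(T^{-1}\,\cdot\,,y)\equiv c(y)$ hold, and put $x'=T^{-1}x$. If $c(y)<\infty$, choose $n_k\to\infty$ with $s_{n_k}|T^{n_k}x'-y|\to c(y)$; then $|T^{n_k}x'-y|\to0$, so for large $k$ the points $T^{n_k}x'$ and $y$ lie in a common continuity interval of $T$ and $|T^{n_k+1}x'-Ty|=|T^{n_k}x'-y|$, whence $c(Ty)=\phi(Tx',Ty)\le\liminf_k s_{n_k}|T^{n_k+1}x'-Ty|=c(y)$. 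If instead $c(y)=\infty$, i.e.\ $s_n|T^nx'-y|\to\infty$, then any bounded subsequence $s_{m_k}|T^{m_k+1}x'-Ty|\le C$ would, upon applying $T^{-1}$ near the continuity point $Ty$, give $|T^{m_k}x'-y|=|T^{m_k+1}x'-Ty|$ for large $k$ and hence $s_{m_k}|T^{m_k}x'-y|\le C$, a contradiction; so $c(Ty)=\infty$. Thus $c$ does not increase under $T$, and the ergodicity lemma --- now applied to $\nu$ --- forces $c$ to equal a single constant $c_0\in\{0,\infty\}$ for $\nu$-a.a.\ $y$. By Fubini $\phi\equiv c_0$ $\mu\ttimes\nu$-a.e., i.e.\ $\phi^*(\mu,\nu)=\delta_{c_0}$, which establishes (ii) and hence the proposition.

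The point that will require care is precisely the role of the breakpoints of $T$ and $T^{-1}$ in Stage~2: the local-isometry identities hold only after deleting the finite set $D\cup T^{-1}(D)$, which costs nothing when $\mu$ and $\nu$ are non-atomic but can swallow the entire support when an ergodic measure is a uniform measure on a periodic orbit. That degenerate case should be handled separately and is elementary: for periodic $x,y$ the quantity $|T^nx-y|$ is eventually periodic in $n$, so the gauge is identically $0$ or identically $\infty$ on the relevant product set according to whether the two (finite) supports meet, and the mixed atomic/non-atomic case is analogous.
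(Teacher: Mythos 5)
Your proof is correct in substance and shares the paper's overall skeleton --- extremality from Theorem~\ref{thm:gsoinf}, a fibered ergodicity argument in each variable separately, then Fubini (cf.\ Lemma~\ref{lem:singleton}) --- but your Stage~2 is genuinely different from what the paper does. The paper deduces the proposition from the general Theorem~\ref{thm:conn2} (for locally Lipschitz systems): there the inequality in the second variable is $\phi(x,Ty)\le\kappa\,D_T(y)\,\phi(x,y)$ with $\kappa=\sup_n s_{n+1}/s_n$, and the multiplicative constant is absorbed only because $\phi$ is already known to take values in $\{0,\infty\}$ almost everywhere; this is exactly where the bounded-ratio half of ``nice'' is spent. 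You instead exploit the exact piecewise isometry of an IET along a subsequence realizing the liminf, which yields $c(Ty)\le c(y)$ with no index shift in $\bfs$ and no multiplicative constant; as a result you never use bounded-ratio, so your argument in fact proves the statement for every two-jumpy scale sequence --- a slight strengthening --- at the price of being IET-specific rather than covering general locally Lipschitz maps. The one place needing more care is the degenerate case you defer at the end: when $\mu$ and $\nu$ are both atomic, or $\mu$ is atomic and $\nu$ continuous, the claim is as elementary as you say; but when $\mu$ is continuous and $\nu$ is uniform on a periodic orbit that passes through a discontinuity of $T$ or $T^{-1}$ and abuts $\supp(\mu)$, the deleted set $D\cup T^{-1}(D)$ carries positive $\nu$-mass, one link of the cyclic chain $c(y)\ge c(Ty)\ge\cdots$ is lost, and ``analogous'' does not literally apply --- that sub-case deserves its own sentence. (To be fair, the paper's route has the same blind spot, since $D_T=\infty$ at discontinuities and $T$ then need not be locally Lipshitz $\pmod{\nu}$ for such a $\nu$.)
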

Recall that $\bfs$ is called nice if it is bounded-ratio and two-jumpy 
(Definition \ref{def:scale}). Under the stronger condition that $\bfs$ is steady 
and two-jumpy the claim of Proposition \ref{thm:ergiets2} follows immediately from 
Theorem \ref{thm:gsoinf} and Proposition \ref{thm:ergiets}. 
In Section \ref{sec:general} a more general version of Proposition \ref{thm:ergiets2}
is stated and proved (Theorem \ref{thm:conn2}).

By Theorem \ref{thm:phi1mu}, $\phi_1^{*}(\mu,\mu)=\delta_0$, for any ergodic $\mu\in\MM(T)$.
It is possible to have  $\phi^{*}_1(\mu,\nu)=\delta_\infty$  for some minimal IET $T$ and 
$T$-ergodic measures\, $\mu,\nu\in\MM(T)$ (Proposition \ref{prop:chaikaex}, see also
Theorem \ref{thm:chaikaex}).\vsp{1.5}

The following questions are open:
\begin{question}\label{q:fin}
    Under the conditions of Proposition \ref{thm:ergiets}, the measure $\phi^{*}(\mu,\nu)$ 
    must be atomic:  $\phi^{*}(\mu,\nu)=\delta_c$ with some $c\in [0,\infty]$. May
    $c$  be a finite positive number, $0<c<\infty$?                          
\end{question}
Note that the metric on $\ui=[0,1)$ in the above question is assumed to be the standard
one, otherwise the answer is `yes' (a counterexample is possible on the basis of 
Remark \ref{rem:gm}, page \pageref{rem:gm}). Note that 
$\phi^*(\mu,\nu)\in\{0,\infty\}$  holds if the measure $\nu$ is decisive, e.\,g. if 
$\nu$ is absolutely continuous relative to $\lam$ (see Corollary 
\ref{cor:conn1}, page \pageref{cor:conn1}).
\begin{question}\label{q:sym} Does the equality $\phi^*(\mu,\nu)=\phi^*(\nu,\mu)$ need    \mbc{q:sym\\2}
       to hold (perhaps, under some  conditions on the scale sequence $\bfs$ 
       and the measures  $\mu,\nu\in\MM(T)$)? 
\end{question}
A map $T\colon\ui\to\ui$ is called $\alp$-collapsing if $\phi_\alp(x,y)=0$ {\em for all}   
$x,y\in\ui$.                                                                              \mbc{q:iet0\\3}
We can prove that there are no $1$-collapsing measurable maps with 
$\MM(T)\neq\emptyset$. Using the axiom of choice we can construct a $1$-collapsing 
(non-measurable) map. 
\begin{question}\label{q:sym} Does there exist a $1$-collapsing measurable map
$T\colon\ui\to\ui$?         \mbc{WHAT?\\NO ?}
(Such a map $T$ cannot have invariant measures: \mbox{$\MM(T)=\emptyset$}).
\end{question}

    In fact, we know of no  example of a  $1$-collapsing \mmps\ $(\ui,T,\leb)$            \mbc{BEGIN\\collapsing}
(on the unit interval).\vsp2

All minimal  $2$-IETs  (equivalently, irrational rotations)  are  $\alp$-collapsing       \mbc{1.85\\att}                       
for all  $0<\alp<1$  (because the sets
$S_q=\{x+i\alpha\text{ (mod 1)}\mid 0\leq k<q\}\subset \ui$\,     are $2/q$-dense     
in $\ui$ for infinitely many  $q\in\N$;  in particular, for all denominators  $q=q_n$
of the convergents  for $\alpha$.  No $2$ or $3$-IET  is  $1$-collapsing (\cite{Tseng2}),      \mbc{refrnce?}
 and it is conjectured by the authors that the answer to          \mbc{END\\collapsing}
Question \ref{q:sym} is  negative.                                                       
\vsp{1.5}

\subsection{Results on Proximality Gauges}                           
Recall that a $3$-IET is determined by a pair $(L,\pi)$  where  $L\in\R^3$  is a 
probability vector and $\pi\in S_3$ is a permutation on the set  \{1,2,3\}. 
In what follows, the implied permutation for $3$-IETs is assumed to be
$\pi=(3\ 2\ 1)$  (reversing the order of the exchanged intervals).                        \mbc{1.9}

We show that that for ``generic" $3$-IETs\, $(\ui,T)$\, and all  $\alp>0$, the relation
\[
    \psi_\alp(x,y)=\liminf_{n\to \infty}\limits n^\alp\,d(T^n(x),T^n(y))
    \equiv\infty\pmod{\lam\ttimes\lam}
\]
\vspm1 \noindent holds  (see \eqref{eq:psia}).                                    
\begin{thm}\label{thm:3psia}
For Lebesgue almost all \ $3$-IETs  (with $\pi=(3\ 2\ 1)$),                                \mbc{thm:3psia}
$\psi^{*}_\alp(\lam,\lam)=\delta_\infty$, for all\, $\alp>0$.
\end{thm}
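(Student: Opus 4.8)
The plan is to reduce the statement about $\psi_\alp$ for a $3$-IET $T$ to a quantitative Diophantine statement about the pair of points $(x,y)$ and the renormalization (Rauzy--Veech) dynamics on the parameter space of $3$-IETs. First I would record two reductions. Since all $\psi_\alp$ are $\lam\ttimes\lam$-extreme (Corollary \ref{cor:gsoinf}) and, by Proposition \ref{prop:dismix}, constant whenever $T$ is weakly mixing (which holds for a.e.\ $3$-IET by \cite{KaS}), it suffices to show that for a.e.\ $3$-IET and \emph{some} fixed $\alp>0$ one has $\psi_\alp(x,y)=\infty$ for $\lam\ttimes\lam$-a.a.\ $(x,y)$; the value being a constant in $\{0,\infty\}$, ruling out $0$ for one $\alp$ rules it out for all $\alp$ and gives $\psi^*_\alp(\lam,\lam)=\delta_\infty$ for every $\alp>0$. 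It then suffices to prove: for a.e.\ $3$-IET $T$ and $\lam\ttimes\lam$-a.a.\ $(x,y)$ there exists $c=c(x,y)>0$ with $d(T^n x, T^n y)\ge c\,n^{-\alp}$ for all $n$.

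The main step is to estimate $d(T^n x, T^n y)$ from below using the structure of $3$-IETs. A $3$-IET with $\pi=(3\ 2\ 1)$ is, up to coding, an irrational rotation with one marked point, and its Rauzy--Veech induction is governed by the Gauss-type map on the relevant parameter; equivalently one can work directly with the continued-fraction expansion of the underlying rotation number $\beta$ and the position of the extra discontinuity. For $T\times T$ acting on $(x,y)$, the quantity $d(T^n x, T^n y)$ changes only when the orbit segment of one of the two points crosses a discontinuity of $T$ that the other does not; between such crossings $d(T^nx,T^ny)$ is constant. So the key is to control, as a function of $n$, how often and how closely the $T$-orbit of (say) the difference vector visits small neighborhoods of the finitely many discontinuities. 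Using the Rauzy--Veech towers, at renormalization stage $k$ the interval is cut into towers of roughly $q_k$ levels over subintervals of length $\asymp 1/q_k$, and a Borel--Cantelli argument over the stages $k$, together with the known distortion/integrability estimates for the Rauzy--Veech cocycle (the fact that $\log$ of the relevant ratios is integrable, hence $q_{k+1}/q_k$ does not grow too fast for a.e.\ parameter), should yield that the number of discontinuity-crossings up to time $n$ is $O(n^{\eta})$ for every $\eta>0$ and a.e.\ parameter, and that the smallest gap produced is $\gg n^{-\alp}$ for a.e.\ $(x,y)$.

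More concretely, the step I would carry out is: fix $\alp>0$; for a.e.\ parameter and a.e.\ $(x,y)$, show $\sum_n \lam\!\times\!\lam\big(\{(x,y): d(T^nx,T^ny)< n^{-\alp}\}\big)<\infty$ is the wrong thing (the event is not independent of $n$), so instead run Borel--Cantelli along the subsequence $n_k=q_k$ of Rauzy--Veech return times: at stage $k$ the set of $(x,y)$ for which some level of the tower forces $d(T^{j}x,T^{j}y)<q_k^{-\alp}$ for some $j\le q_{k+1}$ has measure $\ll q_{k+1}\cdot q_k^{-\alp}\cdot(\text{something})$, and one uses the a.e.\ subexponential growth of $q_k$ plus $\alp>1$ (or an averaged version for $0<\alp\le 1$, exploiting that we only need \emph{one} value of $\alp$) to make the series converge. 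The first Borel--Cantelli lemma then gives that for a.e.\ parameter and a.e.\ $(x,y)$, eventually $d(T^nx,T^ny)\ge c\,n^{-\alp}$, i.e.\ $\psi_\alp(x,y)=\infty$.

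The hard part will be making the Borel--Cantelli counting honest: one must carefully track, through the Rauzy--Veech renormalization of the \emph{product} system $T\times T$ on $\ui^2$ (or equivalently of the induced difference dynamics), exactly which times $n$ can produce a small value $d(T^nx,T^ny)$, and bound the measure of the corresponding $(x,y)$-sets by combining (i) the tower combinatorics at each stage, (ii) the a.e.\ growth bound $q_k = e^{o(k)}$ and $q_{k+1}=q_k^{1+o(1)}$ coming from integrability of the Rauzy--Veech cocycle, and (iii) the fact that we are free to pick $\alp$ as large as we like. Once the summability is established the rest is the two soft reductions above (extremality and weak-mixing constancy), so essentially all the work is in this quantitative estimate on the renormalization side.
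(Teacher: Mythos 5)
Your first reduction is logically backwards, and it undermines the whole plan. Since $n^{\alpha_1}\leq n^{\alpha_2}$ for $\alpha_1<\alpha_2$, one has $\psi_{\alpha_1}\leq\psi_{\alpha_2}$ pointwise, so establishing $\psi_{\alpha_0}(x,y)=\infty$ for one fixed $\alpha_0$ (or for $\alpha$ ``as large as we like'') only yields the conclusion for all $\alpha\geq\alpha_0$; it says nothing about $\alpha<\alpha_0$. Extremality and weak-mixing constancy do not help here: the constants $c_\alpha\in\{0,\infty\}$ with $\psi^*_\alpha(\lambda,\lambda)=\delta_{c_\alpha}$ genuinely depend on $\alpha$ (that dependence is exactly what the proximality constant $C_\psi$ records), so ruling out $0$ for one $\alpha$ does not rule it out for smaller $\alpha$. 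The entire content of the theorem is the case of arbitrarily small $\alpha>0$, i.e.\ that $C_\psi(T)=0$ and that $\delta_\infty$ persists all the way down to the threshold. Your later steps, which explicitly invoke ``$\alpha>1$'' or the freedom to take $\alpha$ large, therefore cannot prove the statement.

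The second gap is in the Borel--Cantelli count itself. A union bound over the times $j\leq q_{k+1}$ in a renormalization block produces a factor of order $q_{k+1}$ against a target size $q_k^{-\alpha}$, and for small $\alpha$ the resulting series diverges no matter how slowly $q_k$ grows. The paper's proof avoids this by not summing over times at all: writing $T^jy\ominus T^jx=(y\ominus x)\oplus\big((T^jy\ominus y)\ominus(T^jx\ominus x)\big)$, it observes that for fixed $x$ the bad set of $y$ in the block $n\leq j\leq 2n$ lies in a $t/s_n$-neighborhood of at most $\mathrm{card}(\Delta'_{2n}(T))$ points, where $\Delta'_n(T)$ is the set of possible values of the relative displacement; and this cardinality is $n^{o(1)}$ for a.e.\ $3$-IET because the displacement is an integer combination of visit counts to the exchanged intervals, whose fluctuations between two orbits are controlled by the discrepancy of the underlying Roth-type rotation (Proposition \ref{thm:psi}, Theorem \ref{thm:om2tau}). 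That counting of \emph{values} rather than \emph{times} is the missing idea; without it the convergence for small $\alpha$ --- the only case that matters --- is out of reach. Your renormalization picture is not wrong in spirit (it is essentially how one proves the discrepancy bound), but as organized it never isolates the quantity $\Delta'_n(T)$ that makes the argument close.
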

The result  is quite surprising, especially when compared 
with Corollary  \ref{cor:phi1lam} (on page \pageref{cor:phi1lam}).
Note that ``most'' \mbox{$3$-IETs} $(\ui,T)$\, are weakly mixing (\cite{KaS}), so that\,
\mbox{$\liminf_{n\to \infty}\limits\, d(T^n(x),T^n(y))\equiv0\!\!\pmod{\lam\toto\!}$}
(since\, $T\times T$\, is ergodic).
 
Theorem \ref{thm:3psia} can be restated in the following way.

\begin{cor}\label{cor:3psia}
For Lebesgue almost all \ $3$-IETs\, $T$ (with $\pi=(3\ 2\ 1)$),                            \mbc{cor:3psia}
the proximality constant $C_{\psi}(T)$  vanishes.
\end{cor}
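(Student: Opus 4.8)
The plan is to deduce Corollary~\ref{cor:3psia} directly from Theorem~\ref{thm:3psia} by unwinding the definition of the proximality constant $C_\psi$ recorded in~\eqref{eqs:3constb}. Since $\lam\in\MM(T)$ for every IET $T$, the triple $(\ui,T,\lam)$ is a \mmps, and by definition
\[
C_\psi(T)=C_\psi\big((\ui,T,\lam)\big)
=\sup\big(\{0\}\cup\{\alp>0\mid \psi^*_\alp(\lam,\lam)=\delta_0\}\big).
\]

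First I would invoke Theorem~\ref{thm:3psia} to obtain a Lebesgue-conull set $\mathcal G$ of $3$-IETs with $\pi=(3\ 2\ 1)$ such that $\psi^*_\alp(\lam,\lam)=\delta_\infty$ for every $T\in\mathcal G$ and every $\alp>0$. Fix such a $T$. As $\delta_0$ and $\delta_\infty$ are distinct measures in $\MM([0,\infty])$, the identity $\psi^*_\alp(\lam,\lam)=\delta_\infty$ precludes $\psi^*_\alp(\lam,\lam)=\delta_0$ for every $\alp>0$. Hence the set $\{\alp>0\mid \psi^*_\alp(\lam,\lam)=\delta_0\}$ is empty, the supremum above is taken over the singleton $\{0\}$, and $C_\psi(T)=0$. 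Since this holds for all $T$ in the conull set $\mathcal G$, the corollary follows.

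There is essentially no obstacle at the level of the corollary: it is a bookkeeping reformulation of Theorem~\ref{thm:3psia}, and all the genuine work is contained in that theorem (whose proof, via the renormalization analysis of $3$-IETs together with the weak mixing of generic $3$-IETs from~\cite{KaS}, is developed in Section~\ref{sec:prox}). Alternatively, one could route the deduction through Proposition~\ref{prop:dismix}: a generic $3$-IET with $\pi=(3\ 2\ 1)$ is weakly mixing and the power scale sequences $\{n^\alp\}$ are monotone, so that proposition yields the dichotomy $\psi^*_\alp(\lam,\lam)=\delta_0$ for $\alp<C_\psi$ and $\psi^*_\alp(\lam,\lam)=\delta_\infty$ for $\alp>C_\psi$; Theorem~\ref{thm:3psia} shows the second alternative already holds for arbitrarily small $\alp>0$, which again forces $C_\psi=0$. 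Either way the argument is routine.
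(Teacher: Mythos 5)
Your proposal is correct and matches the paper exactly: the authors present Corollary~\ref{cor:3psia} as a direct restatement of Theorem~\ref{thm:3psia}, and your unwinding of the definition of $C_\psi$ in~\eqref{eqs:3constb} (the set $\{\alp>0\mid \psi^*_\alp(\lam,\lam)=\delta_0\}$ is empty because $\psi^*_\alp(\lam,\lam)=\delta_\infty\neq\delta_0$ for every $\alp>0$, so the supremum is $0$) is precisely the intended bookkeeping. The alternative route through Proposition~\ref{prop:dismix} is not needed but is also sound.
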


Proposition \ref{proxiet} shows that the proximality constant of any IET is less than or equal to $\frac 1 2$.
In Section \ref{sec:tau} we introduce the notion of the $\tau$-entropy of an IET 
and show that its value provides an upper bound 
for its proximality constant. $\tau$-entropy is connected to the convergence of ergodic averages for the characteristic function of subintervals (Theorem \ref{thm:om2tau}).\vsp1

\subsection{Absence of Topological Mixing for $3$-IETs}  
We prove that no  $3$-IET  can be strongly topologically mixing 
(Theorem \ref{thm:3nomix}, page \pageref{thm:3nomix}).  
Note that the second author (J.~Chaika)  has recently established the existence of  
topologically mixing 4-IETs \cite{chaika3}. It is known that no  IET  
is (measure-theoretically) strongly mixing \cite{nomixing}  and that 
``many'' $3$-IETs  are weakly mixing \cite{KaS}.                                         \mbc{1.95}

\section{\large\bf Contact Gauges and Decisive Measures}\label{sec:dec}                  \mbc{sec:dec}        
Given a sequence $\bfx=\{x_n\}_1^\infty$ in a metric space  $(X,d)$ and a scale sequence  
$\bfs=\{s_n\}_1^\infty$, by the contact gauge (of a sequence $\bfx$ relative to a scale      \mbc{2.0}
sequence $\bfs$) we mean the map $\omega\colon X\to[0,\infty]$ defined as follows:
\begin{align}\label{eq:contact}  
    \omega(x)=\Omega(x,\bfs,\bfx)=\liminf_{n\to \infty}\limits 
    \, s_n\,d(x_n,x) \hsp9 (\text{{\small the contact gauge}}).
\end{align}
\begin{defi}\label{def:dec}
Let a metric space $(X,d)$ be given. A measure $\mu\in\MM(X)$  is called decisive        \mbc{def:dec}
if for every sequence $\bfx=\{x_k\}_1^\infty$ in $X$ and for every scale sequence  
$\bfs=\{s_n\}_1^\infty$  the contact gauge $\omega(x)=\Omega(x,\bfs,\bfx)$  is 
$\mu$-extreme (i.\,e., the equality $\mu(\omega^{-1}(\{0,\infty\}))=1$ holds).
\end{defi}
Thus a measure $\mu\in\MM(X)$ on a metric space $X$ is decisive if all contact
gauges on it are extreme (whatever the choices for $\bfx$ and $\bfs$ are made).
\begin{prop}\label{prop:zinfui}
   The Lebesgue measure on the unit interval $\lam\in\MM(\ui)$ is decisive.              \mbc{prop:zinfui\\2.1}
\end{prop}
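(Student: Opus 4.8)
The goal is to show: given any sequence $\bfx=\{x_k\}$ in $\ui=[0,1)$ and any scale sequence $\bfs=\{s_n\}$ with $s_n\to\infty$, the contact gauge $\omega(x)=\liminf_n s_n\,d(x_n,x)$ takes values in $\{0,\infty\}$ for $\lam$-a.\,a.\ $x$. Equivalently, the set $E_c=\{x\in\ui\mid \omega(x)\le c\}$ should be $\lam$-null for every finite $c>0$, since then $\{0<\omega(x)<\infty\}=\bigcup_{c\in\N}(E_c\setminus\omega^{-1}(0))$ is null. The plan is a direct Borel--Cantelli / density argument: I would observe that
\[
  E_c \subset \{x\mid d(x_n,x)\le \tfrac{c}{s_n}\ \text{for infinitely many }n\}
       = \bigcap_{N\ge1}\ \bigcup_{n\ge N} B\!\left(x_n,\tfrac{c}{s_n}\right),
\]
where $B(x_n,r)$ denotes the arc of radius $r$ about $x_n$ in $\ui$. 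Each ball has $\lam$-measure at most $2c/s_n$, and since $s_n\to\infty$ these individual measures tend to $0$ — but of course $\sum 2c/s_n$ need not converge, so the naive Borel--Cantelli lemma does not directly apply, and that is precisely the subtlety.

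To get around the possible divergence of $\sum 1/s_n$, I would pass to a Lebesgue density argument instead of a crude measure sum. Suppose for contradiction that $\lam(E_c)>0$ for some finite $c$. Pick a Lebesgue density point $x_0$ of $E_c$, so that $\lam(E_c\cap I)/\lam(I)\to 1$ as $I$ shrinks to $x_0$. The key point is that for each $x\in E_c$ there are infinitely many $n$ with $x$ within $c/s_n$ of $x_n$; I want to exploit that a single ball $B(x_n,c/s_n)$ of small radius cannot meet $E_c$ in a set of nearly full relative measure unless it "explains" all the nearby points simultaneously — and then a counting/overlap estimate on the family $\{B(x_n,c/s_n)\}_{n\ge N}$ restricted to a small neighborhood of $x_0$ yields a contradiction with the density property. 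Concretely: fix a small interval $I$ about $x_0$ with $\lam(E_c\cap I)>(1-\eps)\lam(I)$; every point of $E_c\cap I$ lies in infinitely many of the balls, hence (choosing $N$ large so that all balls $B(x_n,c/s_n)$ with $n\ge N$ that meet $I$ have radius $<\eps\lam(I)$) it lies in some ball of small radius contained in a slight enlargement $I'$ of $I$; covering $E_c\cap I$ by such balls and extracting a Vitali-type subcover forces $\lam(E_c\cap I)\le \lam(I')\cdot(\text{bounded overlap factor})$, but one can iterate this — each point is in \emph{infinitely many} balls, so after removing finitely many indices the same cover still works, driving the covering measure to $0$. That contradiction shows $\lam(E_c)=0$.

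Alternatively, and perhaps more cleanly, I would phrase it via the one-sided observation that the relevant "bad" set is a $\limsup$ of balls of radii shrinking to zero, and invoke the standard fact that such a $\limsup$ set, if it has positive measure, must (by Lebesgue density applied to the tails $\bigcup_{n\ge N}B(x_n,c/s_n)$, whose measures decrease) have its density points covered infinitely often by arbitrarily small balls — and then note that for $\lam$-a.\,a.\ $x$ in a measurable set $A$, and any family of balls shrinking to $x$, the density of $A$ in those balls tends to $1$; combined with the fact that the balls here can be chosen centered far from $x$ (at $x_n$, not at $x$), one derives that actually $\omega(x)=0$ at a density point, so $E_c\setminus\omega^{-1}(0)$ contains no density points and is therefore null.

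\textbf{Main obstacle.} The crux is handling the case $\sum_n 1/s_n=\infty$, where Borel--Cantelli gives no information and the $\limsup$ of the balls could even be all of $\ui$. The resolution must be that being in the $\limsup$ \emph{with a bounded radius factor} $c$ (rather than with radius exactly $c/s_n$ allowed to be replaced by anything) is restrictive: at a density point of $E_c$ one can rescale and see that the point is approximated by the $x_n$ at rate $c/s_n$ from the "wrong" side infinitely often in a way incompatible with density unless the true $\liminf$ is $0$. Making this rescaling/self-improvement rigorous — essentially showing "$\omega(x)\le c$ at a density point of $\{\omega\le c\}$ forces $\omega(x)<c'$ for every $c'>0$" — is the technical heart, and I expect the author's proof to isolate exactly this self-improvement step, probably by a clean density-point computation in $\ui$ using the translation structure of Lebesgue measure.
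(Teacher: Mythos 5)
Your instinct that the proof must run through the Lebesgue density theorem rather than Borel--Cantelli is correct, and that is indeed the engine of the paper's argument; but both your reduction and your concrete mechanism have genuine problems. First, the target ``$\lam(E_c)=0$ for $E_c=\{\omega\le c\}$'' is false in general: $\omega^{-1}(0)\subset E_c$, and $\omega$ typically vanishes on a set of full measure, so $E_c$ can have measure one. Relatedly, the self-improvement statement you isolate as the technical heart --- ``$\omega(x)\le c$ at a density point of $\{\omega\le c\}$ forces $\omega(x)<c'$ for every $c'>0$'' --- is false as stated: take $x_n=n\gamma \bmod 1$ with $\gamma$ the golden mean and $s_n=n$; then $\{\omega\le 3\}=\ui$ by Chebyshev's theorem, so every point is a density point of it, yet $\omega(0)=\liminf_{n\to\infty} n\fracpart{n\gamma}=1/\sqrt5\in(0,3)$. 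The correct target is $\lam(\{0<\omega<\infty\})=0$, and no density argument applied directly to a level set $\{\omega\le c\}$ or $\{0<\omega\le c\}$ can work, because the hypothesis $\omega(y)>0$ only yields a lower bound $s_n|x_n-y|>a(y)$ for $n>N(y)$, with both $a(y)$ and $N(y)$ depending on $y$.

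The missing idea is to uniformize both parameters \emph{before} invoking density. The paper writes $\{0<\omega<\infty\}$ as the countable union over $a=(3/2)^k$, $k\in\Z$, and $N\in\N$ of the sets $E(a,N)=\{x\mid a<\omega(x)<2a \ \text{and}\ s_n|x_n-x|>a \ \text{for all}\ n>N\}$, and shows each is null. For $x\in E(a,N)$ there are infinitely many $n>N$ with $s_n|x_n-x|<2a$, and for each such $n$ the ball of radius $a/s_n$ about $x_n$ is contained in the ball of radius $3a/s_n$ about $x$ and is \emph{entirely disjoint from} $E(a,N)$, since every $y\in E(a,N)$ satisfies $|x_n-y|>a/s_n$. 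As the small ball has exactly one third the measure of the large one and the radii tend to $0$, $x$ cannot be a density point of $E(a,N)$; hence $\lam(E(a,N))=0$. It is this uniform tail bound that manufactures holes of definite proportion; your Vitali-cover step (``after removing finitely many indices the same cover still works, driving the covering measure to $0$'') produces no decay at all when $\sum 1/s_n$ diverges and cannot substitute for it.
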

There are many examples of decisive measures (absolutely continuous measures on open   
subsets of\, $\R^n$ and smooth manifolds, natural measures on self similar fractals       \mbc{\large ADD}
etc.,  see \cite{BC}). Since the emphasis in the present paper is on the unit interval,
Proposition \ref{prop:zinfui} is confined to this case. 

   Denote by  $B_a(\delta)=\{x\in\ui\mid \fracpart{x-a}<\delta\}$  the open ball of       \mbc{eq:ballui}
radius $\delta>0$ around $a\in\ui$.  (Recall that\, 
$\fracpart{s}=\min_{n\in\Z}\limits |s-n|$\, stands for the distance of\, $s\in\R$\,
to the closest integer).                      
\mbc{proof 888}
\begin{proof}
We have to prove that every contact gauge\,  
$\omega(x)=\liminf_{n\to\infty}\limits s_n |x_n-x|$\, on the unit interval is 
$\lam$-extreme, i.\,e.\ that $\lam(E)=0$  where  $E=\omega^{-1}((0,\infty))$.

For $a>0$ and $N\in\N$ denote\,  $E(a)=\omega^{-1}((a,2a))$\,  and\,  
$E(a,N)=\{x\in E(a)\mid\!\inf_{n>N}\limits s_n|x_n-x|>a\}$. 

It would suffice to prove that\, $\lam(E(a,N))=0$\, for all $a>0$ and $N\in\N$
because the set $E$  is the union of a countable family of sets  $E(a,N)$.
Indeed, one verifies that $E=\bigcup_{k\in\Z}\,E((3/2)^k)$  and that 
$E(a)=\bigcup_{N\in\N}\,E(a,N)$ for all $a>0$.

Let $a>0$ and $N\in\N$ be fixed. Select a point $x\in E(a,N)$. 
Observe that for all $n$  in the infinite set  $S =\{n\in \N\colon s_n|x_n-x|<2a\}$ 
the inclusion $B_{x_n}(\frac a{s_n})\subset B_x(\frac{3a}{s_n})$ holds. 
(The set $S$ is infinite because $x\in E$).
It follows that the set\,  
$\big\{n\in\N\mid B_{x_n}(\tfrac a{s_n})\subset B_x(\frac {3a}{s_n})\,\text{ and }\,
  E(a,N)\cap B_{x_n}(\frac a {s_n})=\emptyset\big\}\,
$
is also infinite because it contains the set $\{n\in S\mid n>N\}$.

Taking in account that $\lam(B_{x_n}(\frac a {s_n}))=\frac13\lam(B_x(\frac {3a}{s_n}))$
and that $\lim_{n\to\infty}\limits\frac a{s_n}=0$, we conclude that\, $x$\, cannot be a 
(Lebesgue) density point for the set $E(a,N)$. 
Since $x\in E(a,N)$ is arbitrary, $\lam(E(a,N))=0$ (by  the Lebesgue density theorem),
and the proof of the proposition is complete. \vspm3
\end{proof}

Now Proposition  \ref{thm:genlam} becomes (in view of Proposition~\ref{prop:zinfui}) 
a special case of the following theorem.
\begin{thm}\label{thm:gendec}
Let $\bfs=\{s_n\}_1^\infty$ be a scale sequence, let $(X,T)$ be a metric system and       \mbc{thm:gendec}
let $\mu,\nu\in\MM(X)$ (not necessarily $T$-invariant). Assume that $\nu$  is decisive
(in $X$). Then\, $\hat\phi(\mu,\nu)\subset\{0,\infty\}$, i.e. 
$\phi^*(\mu,\nu)=c\delta_{\uvm{.6}0}+(1-c)\delta_{\uvm{.6}\infty}$, 
for  some $0\leq c\leq 1$.
\end{thm}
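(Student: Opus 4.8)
The plan is to reduce the statement to the decisiveness of $\nu$ by slicing the product $X^2$ along the first coordinate. First I would fix $x\in X$ and look at the function $y\mapsto\phi(x,y)=\liminf_{n\to\infty}\limits s_n\,d(T^n x,y)$. Setting $x_n=T^n(x)$, this is exactly the contact gauge $\omega(y)=\Omega(y,\bfs,\bfx)$ of the sequence $\bfx=\{x_n\}_1^\infty$ relative to the scale sequence $\bfs$ (see \eqref{eq:contact}). Since $\nu$ is decisive, Definition~\ref{def:dec} gives that $\omega$ is $\nu$-extreme, i.\,e.\ $\nu\big(\{y\in X\mid\phi(x,y)\in\{0,\infty\}\}\big)=1$. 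Note that this holds for every single $x\in X$, with no measurability or invariance hypothesis on $\mu$ needed at all.

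The second step is to pass from ``for every $x$, for $\nu$-a.\,a.\ $y$'' to ``for $\mu\ttimes\nu$-a.\,a.\ $(x,y)$'' via Fubini--Tonelli. For this I need joint measurability of $\phi$ on $X^2$: the map $(x,y)\mapsto s_n\,d(T^n x,y)$ is Borel for each $n$ (because $T$ is Borel measurable and $d$ is continuous), hence so is $\phi=\liminf_n\limits s_n d(T^n x,y)$, and therefore $E=\phi^{-1}(\{0,\infty\})\subset X^2$ is a Borel set. By the first step every $x$-section $E_x=\{y\mid(x,y)\in E\}$ satisfies $\nu(E_x)=1$, so Tonelli's theorem yields $(\mu\ttimes\nu)(E)=\int_X\nu(E_x)\,d\mu(x)=1$. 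Thus $\phi$ is $\mu\ttimes\nu$-extreme, which means $\hat\phi(\mu,\nu)=\supp\big(\phi^*(\mu\ttimes\nu)\big)\subset\{0,\infty\}$.

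Finally, any Borel probability measure on $[0,\infty]$ whose support is contained in the two-point set $\{0,\infty\}$ is of the form $c\delta_0+(1-c)\delta_\infty$ with $c=\phi^*(\mu,\nu)(\{0\})\in[0,1]$; this gives the asserted form of $\phi^*(\mu,\nu)$ and completes the proof. I do not expect any serious obstacle: the content is entirely carried by the decisiveness hypothesis (and Proposition~\ref{prop:zinfui} in the case $\nu=\lam$), and the only point requiring mild care is confirming the joint Borel measurability of $\phi$ so that Tonelli's theorem applies. It is worth emphasizing that $\mu$ and $\nu$ are allowed to be non-$T$-invariant here, in contrast with the EGT (Theorem~\ref{thm:egt}); the slicing argument works precisely because it never invokes invariance, using instead only that a single forward orbit $\{T^n x\}$ is a sequence to which the definition of a decisive measure applies.
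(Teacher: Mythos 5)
Your proposal is correct and follows essentially the same route as the paper's proof: fix $x$, recognize $y\mapsto\phi(x,y)$ as the contact gauge of the orbit sequence $\{T^nx\}$, invoke decisiveness of $\nu$ to get extremality on each slice, and then apply Fubini (the paper compresses the measurability check you spell out into the remark that $\phi\in\BB(X)$). No gaps; nothing further is needed.
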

\begin{proof}
For every fixed $x\in X$, 
the values 
$
\phi(x,y)=\liminf_{n\to\infty}\limits s_n d(T^nx,y)=\Omega(y,\bfs,\{T^n x\}_1^\infty)
$
lie in $\{0,\infty\}$  for $\nu$-a.\,a.  $y\in X$  since  $\nu$  is decisive
(Definition \ref{def:dec}). (Equivalently, $\hat\phi(\delta_x,\nu)\subset\{0,\infty\}$).
Since  $\phi(x,y)\in\BB(X)$, Fubini's theorem applies to conclude that 
$\phi(x,y)\in\{0,\infty\}$, for $(\mu\ttimes\nu)$-a.\,a. pairs $(x,y)\in X^2$,
completing the proof of the theorem.\vspm3
\end{proof}
Note that in the special case when $\bfs$  is a two-jumpy scale sequence (e.\,g., if    \mbc{2.2}
$\bfs$ is a power scale sequence: $s_n=n^\alp$, for some $\alp>0$), no assumption on 
``decisiveness'' of $\nu$ is needed because of Theorem \ref{thm:gsoinf}.

In the next section we collect basic facts on the gauges\, $\phi, \psi, \rho$\,  which hold
for general metric systems.  This sets a framework for the proof and comparison with
more specific results on IETs, and also motivates these results.

\section{\bf \large From Quantitative to Connectivity Gauges in General Systems}\label{sec:general}                                    \mbc{sec:general}

In this section we present general results, mostly on the connectivity gauges. 
The motivation comes partially from \cite{bosh3} and \cite{WBe}
where Quantitative Recurrence study has been initiated. 

\subsection{Quantitative Recurrence Results, Review}
In particular, a very general lower bound for the recurrence 
constant is found:
\begin{thm}\label{thm:bosh3} 
\cite{bosh3}  Let $(T,X,\mu,d)$ be an m.m.p system and $\alpha$ be the Hausdorff dimension of $\mu$ 
(with respect to the metric $d$). 
Then $C_{\rho}\geq \frac 1 \alpha$.
\end{thm}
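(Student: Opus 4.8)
The plan is to deduce $C_\rho\ge 1/\alpha$ by proving, for every fixed $\beta$ with $0<\beta<1/\alpha$, that $\rho_\beta^*(\mu)=\delta_0$; since $C_\rho$ is the supremum of $\{0\}$ together with all such exponents, this gives $C_\rho\ge\beta$ for all $\beta<1/\alpha$, hence $C_\rho\ge 1/\alpha$ (and the degenerate case $\alpha=0$ is handled the same way). Writing $s=1/\beta>\alpha$ and using that for nonnegative sequences $\liminf_n a_n=0\iff\liminf_n a_n^s=0$, the statement $\liminf_n n^\beta d(T^nx,x)=0$ is equivalent to $\liminf_n n\,d(T^nx,x)^{s}=0$, so the goal becomes: $\liminf_n n\,d(T^nx,x)^{s}=0$ for $\mu$-a.a.\ $x$. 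I would argue by contradiction, the engine being a covering–counting estimate that plays a Hausdorff cover of an exceptional set against the amount of $\mu$-mass its orbit can carry over a long time window.

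First, the reduction to a clean exceptional set. If the conclusion fails then $\mu(\{x:\liminf_n n\,d(T^nx,x)^s>0\})>0$; passing to a level set of this $\liminf$ and then to the points whose tail estimate begins before a fixed index, a countable-exhaustion argument yields a Borel set $E$ with $\mu(E)>0$, an integer $N\in\N$, and a $\kappa>0$ such that $d(T^nx,x)\ge(\kappa/n)^{1/s}$ for every $x\in E$ and every $n\ge N$. Here is where the hypothesis enters: since $s>\alpha=\hdim(\mu)$, there is a Borel set $A$ with $\mu(A)=1$ and $\mathcal{H}^{s}(A)=0$ — this is the property of the Hausdorff dimension of a measure that the argument uses — and replacing $E$ by $E\cap A$ keeps $\mu(E)>0$ while forcing $\mathcal{H}^{s}(E)=0$.

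Now fix $\eta>0$ and cover $E$ by balls $U_i=B(y_i,\rho_i)$ with $\sum_i\rho_i^{s}<\eta$ and all $\rho_i$ below a threshold depending only on $s$ and $\kappa$ (possible since $\mathcal{H}^{s}(E)=0$). Put $m_i=\lceil\kappa(2\rho_i)^{-s}\rceil-1$, so that $n\le m_i$ implies $n<\kappa(2\rho_i)^{-s}$, i.e.\ $(\kappa/n)^{1/s}>2\rho_i$, and for $\rho_i$ small $m_i\ge c_1\rho_i^{-s}$ with $c_1=c_1(s,\kappa)>0$. The key observation is a return-time bound: if $x\in E\cap U_i$ and $N\le n\le m_i$, then $T^nx\notin U_i$, for otherwise $d(T^nx,x)<2\rho_i<(\kappa/n)^{1/s}$, contradicting the tail estimate. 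Consequently, for any $x\in X$ the integers $n\in\{0,1,\dots,m_i\}$ with $T^nx\in E\cap U_i$ lie among $N$ consecutive integers (if $n_1$ is the least such, every later one differs from $n_1$ by less than $N$), so there are at most $N$ of them. Integrating this count in $x$ and using $T$-invariance of $\mu$ gives $(m_i+1)\,\mu(E\cap U_i)=\int_X \#\{0\le n\le m_i:T^nx\in E\cap U_i\}\,d\mu(x)\le N$, hence $\mu(E\cap U_i)\le N/(m_i+1)\le (N/c_1)\rho_i^{s}$. Summing, $\mu(E)\le\sum_i\mu(E\cap U_i)\le (N/c_1)\sum_i\rho_i^{s}<(N/c_1)\eta$, and letting $\eta\to0$ yields $\mu(E)=0$, the desired contradiction.

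The routine parts are the exhaustion argument extracting $(E,N,\kappa)$ and the floor/ceiling bookkeeping in the choice of $m_i$. The crux — and the only place the dimension hypothesis is used — is the calibration in the last step: the exponent $s$ appearing as $(\kappa/n)^{1/s}$ in the recurrence lower bound is exactly the exponent that must appear in the Hausdorff sum $\sum_i\rho_i^{s}$ for the per-ball estimate $\mu(E\cap U_i)\le C\rho_i^{s}$ to be summable, and this is what pins the threshold at $s>\alpha$, i.e.\ $\beta<1/\alpha$. If one only knew $\mathcal{H}^{s}$ to be $\sigma$-finite on $E$ rather than null, the identical counting would give $\liminf_n n^\beta d(T^nx,x)<\infty$ a.e.\ instead of $=0$, which is the weaker quantitative Poincaré recurrence statement.
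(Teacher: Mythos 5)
Your argument is correct and complete. Note first that the paper itself does not prove this statement: Theorem~\ref{thm:bosh3} is quoted from \cite{bosh3} without proof, so there is no internal proof to compare against. Your proof is essentially the one in that reference: the reduction to the equivalent form $\liminf_n n\,d(T^nx,x)^{s}=0$ with $s=1/\beta>\alpha$, the extraction of a positive-measure set $E$ with a uniform tail bound $d(T^nx,x)\ge(\kappa/n)^{1/s}$ for $n\ge N$, the use of $\hdim(\mu)<s$ to arrange $\mathcal{H}^{s}(E)=0$, and then the return-time count: on each covering ball $U_i$ of radius $\rho_i$ the orbit of any point can visit $E\cap U_i$ at most $N$ times during $[0,m_i]$ with $m_i\asymp\rho_i^{-s}$, which together with $T$-invariance gives $\mu(E\cap U_i)\lesssim\rho_i^{s}$ and hence $\mu(E)=0$. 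All the steps check out, including the calibration $m_i=\lceil\kappa(2\rho_i)^{-s}\rceil-1$ and the observation that two visit times $n_1<n_2$ with $n_2-n_1\ge N$ would contradict the tail bound applied at $y=T^{n_1}x\in E\cap U_i$. It is worth contrasting this with the one proof of a dimension inequality the paper does give, Proposition~\ref{Hdim} ($C_\phi\le 1/\alpha$): that direction is a pure covering computation on the $\limsup$ set $\LS B(T^ix,i^{-s})$ and uses no dynamics, whereas the lower bound on $C_\rho$ genuinely requires the measure-preservation input, which is exactly the $(m_i+1)\,\mu(E\cap U_i)\le N$ counting step you supply. Your closing remark that $\sigma$-finiteness of $\mathcal{H}^{s}$ on $E$ would only yield $\liminf_n n^{\beta}d(T^nx,x)<\infty$ a.e.\ also matches the finer statement of the cited theorem.
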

An analogous version of this for the connectivity gauge is Proposition \ref{Hdim}
(see also Remarks \ref{rem:Hdim con2} and \ref{rem:Hdim prox} in the end of the section).

It is known that for the \mmps s on the unit interval the $1$-recurrence gauge\, $\rho_1$\, 
must be finite almost everywhere; moreover, it is bounded by $1$ if $\mu=\lam$.

\begin{prop}[\cite{bosh3}]\label{prop:recui1}
For every \mmps\  $(\ui,T,\mu)$, the inequality $\rho_1(x)<\infty$ holds for $\mu$-a.\,a.
$x\in\ui$. Moreover, in the special case $\mu=\lam$ the gauge $\rho_1$ is essentially 
bounded: $\rho_1(x)\leq1\pmod{\lam}$.
\end{prop}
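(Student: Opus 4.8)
\emph{Plan.} I would split the argument according to the value of $\hdim(\mu)$. Suppose first $\alpha:=\hdim(\mu)<1$. Then Theorem~\ref{thm:bosh3} gives $C_\rho\ge 1/\alpha>1$, and the set $\{\beta>0:\rho^*_\beta(\mu)=\delta_0\}$ is downward closed --- if $\liminf_n n^\beta d(T^nx,x)=0$ and $0<\beta'<\beta$, then along a witnessing subsequence $n^{\beta'}d(T^nx,x)=n^{\beta'-\beta}\cdot n^{\beta}d(T^nx,x)\to0$, so $\rho^*_{\beta'}(\mu)=\delta_0$ --- whence $C_\rho>1$ forces $\rho^*_1(\mu)=\delta_0$, i.e.\ $\rho_1\equiv0\pmod{\mu}$, which is far stronger than finiteness. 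So the real content of the proposition is the boundary case $\hdim(\mu)=1$, which in particular contains $\mu=\lam$; there Theorem~\ref{thm:bosh3} is silent and one must argue directly, the main tool being quantitative Poincar\'e recurrence via Kac's formula.

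For the direct argument I would treat $\mu=\lam$ first. Fix $\eps>0$ and $N\ge1$ and partition $\ui$ into the intervals $I_j=[\tfrac{j-1}N,\tfrac jN)$, $1\le j\le N$. Let $R_N\colon\ui\to\N\cup\{\infty\}$ send $x\in I_j$ to its first-return time to $I_j$. By Poincar\'e recurrence $R_N<\infty$ a.e., and by Kac's formula $\int_{I_j}R_N\,d\lam=\lam\!\big(\bigcup_{n\ge0}T^nI_j\big)\le1$ for each $j$, hence $\int_\ui R_N\,d\lam\le N$. By Markov's inequality $\mathcal G_N:=\{x:R_N(x)\le(1+\eps)N\}$ has $\lam(\mathcal G_N)\ge\eps/(1+\eps)$, and for $x\in\mathcal G_N$ the point $T^{R_N(x)}x$ lies in the same interval $I_j$ (of length $1/N$) as $x$, so $R_N(x)\,d(T^{R_N(x)}x,x)<(1+\eps)N\cdot\tfrac1N=1+\eps$. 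Letting $N\to\infty$ through a suitable sequence and noting that the witnessing times $R_N(x)$ either tend to $\infty$ (giving infinitely many $n$ with $n\,d(T^nx,x)\le1+\eps$) or stay bounded (so $T^mx=x$ for a fixed $m$, whence $\rho_1(x)=0$), one gets $\rho_1(x)\le1+\eps$ a.e., and then $\eps\downarrow0$ yields $\rho_1\le1$ a.e. For a general $\mu$ with $\hdim(\mu)=1$ the same scheme runs with $I_j$ replaced by a nested sequence of intervals of positive $\mu$-measure shrinking to $x$ (these exist for $\mu$-a.e.\ $x$): Kac again bounds the return times, and since the return points converge to $x$ one obtains $\rho_1(x)<\infty$ (the length/measure mismatch no longer gives the sharp constant $1$, but only finiteness is claimed here).

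The main obstacle is exactly the step glossed over in the previous paragraph: the per-scale estimate only produces a set $\mathcal G_N$ of measure $\ge\eps/(1+\eps)<1$, and it controls $\inf_n n\,d(T^nx,x)$ rather than $\liminf_n n\,d(T^nx,x)$. One therefore has to upgrade ``good at scale $N$'' to ``good at infinitely many scales, for $\mu$-a.e.\ $x$'' --- equivalently, one must rule out a positive-measure set of points that are bad at all large scales $N_k$. This is the delicate bookkeeping carried out in \cite{bosh3}: choose the scales $N_k$ sparse, pass to the induced (first-return) systems, and run a Borel--Cantelli / maximal-inequality argument forcing $\mu$-a.e.\ point to return quickly at infinitely many scales with witnessing times $\to\infty$ (the bounded-witness case being absorbed into $\rho_1=0$, as above). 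Everything else --- Kac's formula, Markov's inequality, the downward closedness of $\{\beta:\rho^*_\beta(\mu)=\delta_0\}$, and the reduction to Theorem~\ref{thm:bosh3} when $\hdim(\mu)<1$ --- is routine.
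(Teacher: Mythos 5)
The paper offers no proof of this proposition --- it is imported verbatim from \cite{bosh3} --- so there is nothing in-house to compare your argument against; it has to stand on its own, and as written it does not. The two steps you actually carry out are fine: the reduction of the case $\hdim(\mu)<1$ to Theorem~\ref{thm:bosh3} via downward closedness of $\{\beta:\rho^*_\beta(\mu)=\delta_0\}$, and the per-scale estimate (equipartition into $N$ intervals, Kac's inequality $\int_{I_j}R_N\,d\lam\le1$, Markov). But you stop exactly where the proof begins, and the route you gesture at for the missing step would fail: the complement of $\mathcal G_N$ has measure as large as $\tfrac1{1+\eps}$ \emph{uniformly in $N$}, so $\sum_N\lam(\mathcal G_N^c)=\infty$ however sparsely the scales $N_k$ are chosen, and no Borel--Cantelli or maximal-inequality argument run on the equipartition can push the bad set below measure $\tfrac1{1+\eps}$, let alone to zero. (A careful exhaustion along your lines does yield $\lam(\{\rho_1>c\})\le 1/c$ and hence the finiteness claim for $\mu=\lam$, but it cannot yield $\rho_1\le1$ a.e.)

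The missing idea is a different localization, and it is the core of the proof rather than ``bookkeeping.'' Fix $\eps>0$ and let $E=E_{N_0,m}$ be the set of $x$ with $n|T^nx-x|>1+\eps$ for all $n\ge N_0$ and $|T^kx-x|\ge 1/m$ for all $1\le k<N_0$; these sets exhaust $\{\rho_1>1+\eps\}$ (periodic points have $\rho_1=0$). If $\lam(E)>0$, take a Lebesgue density point of $E$ and a \emph{single} interval $J$ with $\lam(J)<1/m$ and $\lam(E\cap J)>(1-\eta)\lam(J)$, and apply Kac to $J$ itself: from $\int_{E\cap J}R_J\,d\lam\le1$ some $x\in E\cap J$ has first return time $n=R_J(x)\le\bigl((1-\eta)\lam(J)\bigr)^{-1}$; the lower bound $|T^kx-x|\ge1/m>\lam(J)$ forces $n\ge N_0$, and then $n|T^nx-x|<\tfrac1{1-\eta}<1+\eps$ contradicts $x\in E$. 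Applying Kac to one short interval centered in the bad set --- instead of Markov over a partition --- is precisely what converts ``a positive fraction of $J$'' into ``a point of $E$,'' upgrades the per-scale statement to an almost-everywhere one, and produces the sharp constant $1$. The same criticism applies to your sketch for general $\mu$: the link between $\mu(J)$ and $|J|$ needed to turn Kac's bound on return times into a bound on $n|T^nx-x|$ is the differentiation-theoretic fact that $\limsup_{r\to0}\mu(B(x,r))/r>0$ for $\mu$-a.e.\ $x\in\ui$, which your nested-intervals argument does not supply.
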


Note that the inequality $\rho_1(x)\leq1$ in the above proposition can be replaced 
by $\rho_1(x)\leq1/2$ (unpublished), and the authors conjecture that the optimal constant 
is $\frac1{\sqrt{5}}$.

Theorem \ref{thm:bosh3} (and its versions) have been used in the study of recurrent 
properties of specific dynamical systems (\cite{bosh3}, \cite{BoKo}, \cite{BGI},
\cite{chaika1}, \cite{RS} and \cite{Shk02b}); there are several refinements of it,
often under the additional assumptions (see \cite{BaSa}, \cite{Bar}, \cite{Ga}, 
\cite{g and k}, \cite{Shk02}), \cite{Mo}).

Shkredov in \cite{Shk05}, \cite{Shk07a}, \cite{Shk07b} obtained quantitative pointwise 
multiple recurrence results  (of Szemeredy-Furstenberg type) using Gower's quantitative 
estimates (\cite{Go}, \cite{Shk05}) and the approach in \cite{bosh3}. Kim \cite{KimQR} has recently 
extended Theorem 1 to arbitrary group actions.


\subsection{Quantitative Connectivity Results}
Given a metric system $(X,d,T)$, the dilation gauge \ 
$D_T\colon X\to [0,\infty]$ \ is defined by the formula
\begin{equation}\label{eq:dt}
D_T(x)=
\begin{cases}
\limsup_{\substack{y\to x\\ y\in X}}\limits\,
\uv{1.1}{$\frac{d(T(y),T(x))}{d(y,x)}$}, &
\text{ if }\ x\, \text{ is a limit point of  $X$},\\
0, & \text{ if }\ x\, \text{ is an isolated point in  $X$.}
\end{cases}
\end{equation}

Let $\mu\in\MM(X)$ be a measure. The map  $T$  is said to be a local contraction 
$\!\!\!\pmod{\mu}$ if $D_T(x)\leq1$ for $\mu$-a.\,a. $x\in X$. For example, every IET is 
a local contraction  $\!\!\!\pmod{\mu}$ for every continuous measure  $\mu\in\MM(\ui)$.

The map $T$  is said to be locally Lipshitz $\!\!\!\pmod{\mu}$ if $D_T(x)<\infty$, for 
$\mu$-a.\,a. $x\in X$.  For example, a piecewise monotone map $T:\ui\to\ui$ must be 
locally Lipshitz  $\!\!\!\pmod{\lam}$.\vsp1

For  $x\in X$ and $\mu\in\MM(X)$, we abbreviate 
$\phi^*(\mu,\delta_x)=\phi^*(\mu\ttimes\delta_x)$  to just  $\phi^*(\mu,x)$                \mbc{2.25}
(for notation see \eqref{eqs:fall} and \eqref{eqs:gauges}).
In the similar way,  $\phi^*(\delta_x,\mu)$  is abbreviated to  $\phi^*(x,\mu)$.
(Recall that $\delta_x\in\MM(X)$ stands for the atomic measure supported 
by the point $x\in X$).

Recall that a scale sequence is called nice if it is both two-jumpy and bounded-ratio
(Definition~\ref{def:scalep}, page~\pageref{def:scalep}).
\begin{prop}\label{prop:connectivity} 
Let a metric system  $(X,d,T)$ and a scale sequence  $\bfs=\{s_n\}_1^\infty$ 
be given. Then:                                                                            \mbc{prop:con\\nectivity}
\begin{itemize}
\item[(1)]  Assume that the scale sequence $\bfs$\, is either monotone or steady. 
   Then for every\, \mbox{$T$-ergodic} measure $\mu\in\MM(T)$ and a point $y\in X$, the 
   measure $\phi^*(\mu,y)=\phi^*(\mu\times\delta_y)\in\MM([0,\infty]$ is atomic.
   Moreover, if\,~\bfs\, is two-jumpy then $\phi^*(\mu,y)\in\{\delta_0,\delta_\infty\}$.
   \vsp{1.5}
\item[(2)]  
   Assume that $\bfs$\, is a steady scale sequence and that $T$ is a local contraction      \mbc{2.3}
   $\!\!\pmod{\nu}$, for some  \mbox{$T$-ergodic} measure $\nu\in\MM(T)$.
   Then for every  $x\in X$, the measure $\phi^*(x,\nu)\in\MM([0,\infty]$  is atomic. 
   \vsp{1.5}
\item[(3)]  
   Assume that:

\hsp5  (a)\ $\bfs=\{s_n\}_1^\infty$ is a nice scale sequence,

\hsp5  (b)\ The measure $\nu\in\MM(T)$ is decisive (Definition \ref{def:dec}, 
                 page \pageref{def:dec}) and $T$-ergodic,
                 
\hsp5  (c)\ $T$ is locally Lipshitz$\pmod{\nu}$.

\hspm5  Then $\phi^{*}(x,\nu)\in\{\delta_0,\delta_\infty\}$ 
        for every  $x\in X$. \vsp{1.5}
\end{itemize}
\end{prop}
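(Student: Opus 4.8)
The plan is to handle all three items by one mechanism: show that the appropriate one-variable restriction of $\phi$ is $T$-(sub)invariant and invoke ergodicity, then obtain the refinements to $\{\delta_0,\delta_\infty\}$ from extremality supplied either by the EGT (Theorem~\ref{thm:egt}) or by decisiveness of $\nu$ (Definition~\ref{def:dec}).

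\emph{Item (1).} Fix $y\in X$ and set $g(x)=\phi(x,y)=\liminf_{n\to\infty}s_n\,d(T^nx,y)$. Reindexing by $m=n+1$ gives $g(Tx)=\liminf_{m\to\infty}s_{m-1}\,d(T^mx,y)$, so I would compare the two liminfs termwise. If $\bfs$ is monotone then $s_{m-1}\le s_m$ for large $m$, whence $g(Tx)\le g(x)$ for every $x$; if $\bfs$ is steady then $s_{m-1}/s_m\to1$ forces $g(Tx)=g(x)$. In either case $g$ is $T$-subinvariant, and applying the measure-preserving ergodic $T$ to the bounded function $\arctan g$ (whose integral is $T$-invariant while $(\arctan g)\circ T\le\arctan g$) forces $(\arctan g)\circ T=\arctan g$ $\mu$-a.e., so $g$ is $\mu$-a.s.\ constant and $\phi^*(\mu,y)$ is atomic. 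When $\bfs$ is two-jumpy I would additionally invoke the EGT (Theorem~\ref{thm:egt}) with the gauge $f(\cdot)=d(\cdot,y)$, so that $F=g$ is $\mu$-extreme; an atomic and extreme pushforward is $\delta_0$ or $\delta_\infty$.

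\emph{Items (2) and (3).} Now fix $x\in X$ and set $g(y)=\phi(x,y)$; the common core is a sub-invariance estimate $g(Ty)\le C(y)\,g(y)$ for $\nu$-a.a.\ $y$. To derive it I would, at a point $y$ with $g(y)<\infty$ and $D_T(y)<\infty$ (see \eqref{eq:dt}), pick a subsequence $n_k$ realizing the liminf, so that $s_{n_k}\,d(T^{n_k}x,y)\to g(y)$ and, since $s_{n_k}\to\infty$, also $T^{n_k}x\to y$. Finiteness of $D_T(y)$ makes $T$ continuous at $y$ and yields $d(T^{n_k+1}x,Ty)\le(D_T(y)+\varepsilon)\,d(T^{n_k}x,y)$ for large $k$, whence
\[
  g(Ty)\le\liminf_{k}s_{n_k+1}\,d(T^{n_k+1}x,Ty)\le(D_T(y)+\varepsilon)\Big(\limsup_k\tfrac{s_{n_k+1}}{s_{n_k}}\Big)g(y).
\]
For item (2), $\bfs$ steady gives $s_{n_k+1}/s_{n_k}\to1$ and the local-contraction hypothesis $D_T(y)\le1$ gives $g(Ty)\le g(y)$; the $\arctan$ argument from item~(1) then makes $g$ $\nu$-a.s.\ constant, so $\phi^*(x,\nu)$ is atomic. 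For item (3), $\bfs$ nice is in particular bounded-ratio, so $\limsup_k s_{n_k+1}/s_{n_k}\le\sup_n s_{n+1}/s_n<\infty$, and local Lipschitzness gives a finite $C(y)=(D_T(y)+\varepsilon)\sup_n s_{n+1}/s_n$, so $g(Ty)\le C(y)\,g(y)$ with $C(y)<\infty$ $\nu$-a.e.

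\emph{Conclusion of item (3).} Here I would first use decisiveness: since $g(y)=\phi(x,y)=\Omega(y,\bfs,\{T^nx\}_1^\infty)$ is precisely the contact gauge of the orbit $\{T^nx\}$, the decisive measure $\nu$ makes $g$ $\nu$-extreme, i.e.\ $g\in\{0,\infty\}$ $\nu$-a.e. The estimate $g(Ty)\le C(y)\,g(y)$ with $C(y)<\infty$ shows that $g(y)=0$ forces $g(Ty)=0$, so the zero set $Z=g^{-1}(0)$ satisfies $Z\subseteq T^{-1}Z$ mod $\nu$; the $T$-invariance of $\nu$ upgrades this to $Z=T^{-1}Z$ mod $\nu$, and ergodicity gives $\nu(Z)\in\{0,1\}$. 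Combined with extremality this yields $\phi^*(x,\nu)\in\{\delta_0,\delta_\infty\}$. The main obstacle is the orbit-pushing estimate at the heart of items~(2) and~(3): one must transfer the approximation $T^{n_k}x\to y$ one step forward to $T^{n_k+1}x\to Ty$ and control the induced distance, which is exactly where continuity of $T$ at $y$ (from $D_T(y)\le1$, resp.\ $<\infty$) together with steadiness (resp.\ bounded-ratio) of $\bfs$ is indispensable; without continuity at $y$ the forward image of the approximating orbit is uncontrolled and the comparison collapses.
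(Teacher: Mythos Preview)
Your proposal is correct and follows essentially the same route as the paper: in item~(1) both arguments reduce to $\phi(Tx,y)\le\phi(x,y)$ plus ergodicity (and EGT for the two-jumpy refinement), and in items~(2)--(3) both arguments establish $\phi(x,Ty)\le\kappa\,D_T(y)\,\phi(x,y)$ by pushing an approximating subsequence $T^{n_k}x\to y$ one step forward, then use ergodicity (together with decisiveness in~(3)) to conclude. Your version is in fact more carefully written than the paper's---the $\arctan$ trick makes the ``sub-invariant $\Rightarrow$ a.s.\ constant'' step explicit, and you spell out the subsequence argument that the paper compresses into a single displayed line---but the underlying mechanism is identical.
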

\begin{proof}[\bf\em Proof of Proposition \ref{prop:connectivity}] \ 
{\em Proof of\, \em(1).}
Observe that for all $x,y\in X$ the inequality 
\[
\phi(Tx,y)=\liminf_{n\to\infty} s_n\,d(T^n(Tx),y)=\liminf_{n\to\infty}\,
\frac{s_n}{s_{n+1}} \left(s_{n+1} d(T^{n+1}x,y)\right)\leq\phi(x,y)
\]
holds. For a fixed $y$, the ergodicity of $T$ implies that 
$\phi^{*}(\mu,y)\in\MM([0,\infty])$ is an atomic measure. Finally, 
if $\bfs=\{s_n\}_1^\infty$ is two-jumpy, the EGT (Theorem \ref{thm:egt}) implies that 
$\phi^{*}(\mu,y)\in\{\delta_0,\delta_\infty\}$.\vsp2

\noindent{\em Proof of\, \em(2).} Fix $x\in X$. Then for $\nu$-a.\,a. $y\in X$
we have
\begin{eqnarray}
\phi(x,Ty)=\liminf_{n\to\infty} s_n\,d(T^nx,Ty)&=&
\liminf_{n\to\infty}\ \frac{s_n}{s_{n-1}}\cdot\frac{d(T(T^{n-1}x),Ty)}{d(T^{n-1}x,y)}
\cdot\left(\,s_{n-1}\,d(T^{n-1}x,y)\right)\label{eq:phixty}\\[1.5mm]
&\leq& D_T(y)\phi(x,y)\leq \phi(x,y).\notag
\end{eqnarray}

Since $\nu$ is $T$-ergodic, $\phi^{*}(x,\nu)\in\MM[0,\infty]$ is atomic.\vsp2
\noindent{\em Proof of\, \em(3).} Fix $x\in X$. Now the inequality \eqref{eq:phixty}
takes the form
$
\phi(x,Ty)\leq \kappa D_T(y)\phi(x,y)
$
where $\kappa=\sup_{n\geq1}\limits \frac{s_{n+1}}{s_n}$.
Note that $\hat\phi(x,\nu)\subset\{0,\infty\}$, in view of the decisiveness of~$\nu$.      \mbc{2.35}

Since $T$  is locally Lipshitz$\pmod{\nu}$, it follows that 
$\phi(x,Ty)\leq\phi(x,y)$,  for $\nu$-a.\,a.\ $y\in X$. 
Now, the ergodicity of\, $\nu$\,  implies that $\phi^{*}(x,\nu)\in\MM([0,\infty])$  is 
atomic.  Therefore  $\phi^{*}(x,\nu)\in\{0,\infty\}$.

This completes the proof of Proposition \ref{prop:connectivity}.
\end{proof}
\vsp1
\begin{lem}\label{lem:singleton}
Let  $(X,\mu)$ and $(Y,\nu)$ be two \mms s and assume that\, $\phi$\,  is a gauge on         \mbc{lem:singleton}
the product  $P=X\times Y$  (i.\,e., a measurable map  
$\phi\colon X\times Y\to[0,\infty]$). Assume that the following two conditions hold:
\begin{itemize}
\item for $\mu$-a.\,a. $x\in X$ the measures\,  $\phi^{*}(x,\nu)$ are atomic;
\item for $\nu$-a.\,a. $y\in Y$\, the measures\,  $\phi^{*}(\mu,y)$ are atomic.
\end{itemize}
Then the measure $\phi^{*}(\mu,\nu)$  is atomic too.
\end{lem}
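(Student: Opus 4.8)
The plan is to argue by contradiction: if $\phi^*(\mu,\nu)$ is not atomic, then there exist two disjoint closed intervals $I_1, I_2 \subset [0,\infty]$ (with disjoint closures, say separated by a gap) such that the $(\mu\ttimes\nu)$-measures of $\phi^{-1}(I_1)$ and $\phi^{-1}(I_2)$ are both positive. Concretely, pick a value $c\in(0,\infty)$ such that $(\mu\ttimes\nu)(\phi^{-1}([0,c)))>0$ and $(\mu\ttimes\nu)(\phi^{-1}((c,\infty]))>0$; such a $c$ exists precisely because the pushforward measure is not a single Dirac mass. Write $E = \phi^{-1}([0,c])$ and $E' = \phi^{-1}((c,\infty])$, so $(\mu\ttimes\nu)(E)>0$ and $(\mu\ttimes\nu)(E')>0$.

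The key step is a Fubini slicing argument run in both coordinates. For $x\in X$ let $E_x = \{y\in Y : \phi(x,y)\le c\}$ be the vertical slice; since $\phi^*(x,\nu)$ is atomic for $\mu$-a.a.\ $x$, for each such $x$ either $\nu(E_x)=0$ or $\nu(E_x)=1$ — indeed an atomic measure on $[0,\infty]$ is a single Dirac mass $\delta_{c(x)}$, and $c(x)\le c$ forces $\nu(E_x)=1$ while $c(x)>c$ forces $\nu(E_x)=0$ (the boundary case $c(x)=c$ can be absorbed by choosing $c$ outside the countable set of atoms of the relevant measures, or by noting it still gives a clean dichotomy). Let $A = \{x\in X : \nu(E_x)=1\}$. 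By Fubini, $(\mu\ttimes\nu)(E) = \int_X \nu(E_x)\,d\mu(x) = \mu(A)$, and likewise $(\mu\ttimes\nu)(E') = \mu(X\setminus A)$ up to the boundary fibers. Hence $0<\mu(A)<1$. Now run the symmetric argument in the other coordinate: for $\nu$-a.a.\ $y$, $\phi^*(\mu,y)$ is atomic, so the horizontal slice $E^y = \{x : \phi(x,y)\le c\}$ satisfies $\mu(E^y)\in\{0,1\}$; let $B = \{y : \mu(E^y)=1\}$, and by Fubini $(\mu\ttimes\nu)(E) = \nu(B)$, so $0<\nu(B)<1$ as well.

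To derive the contradiction, I combine the two descriptions of $E$ modulo null sets. On one hand $E = A\times Y$ (mod $\mu\ttimes\nu$-null), since for $x\in A$ almost every $y$ has $\phi(x,y)\le c$ and for $x\notin A$ almost every $y$ has $\phi(x,y)>c$. On the other hand $E = X\times B$ (mod null), by the symmetric slicing. But a set that equals $A\times Y$ mod null and also equals $X\times B$ mod null forces $\mu(A)\nu(B) = \mu(A) = \nu(B)$, i.e.\ $\mu(A)=\nu(B)$ and $\mu(A)^2 = \mu(A)$; since $0<\mu(A)<1$ this is impossible. Therefore no such $c$ exists, and $\phi^*(\mu,\nu)$ is atomic.

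The main obstacle is the measure-theoretic bookkeeping around the boundary value $\phi=c$ and the precise claim "$\phi^*(x,\nu)$ atomic $\Rightarrow$ slice has measure $0$ or $1$." The cleanest fix is to choose $c$ to avoid the (at most countable) set of atoms that could arise as $c(x)$ with positive $\mu$-weight, or equivalently to phrase the dichotomy directly: atomicity of $\phi^*(x,\nu)$ means $\phi(x,\cdot)$ is $\nu$-essentially constant, so $\nu(\{y:\phi(x,y)\le c\})\in\{0,1\}$ for every $c$ that is not the essential value — and the set of $x$ whose essential value lands exactly on a fixed $c$ contributes a slice of measure $0$ or $1$ anyway, so the Fubini identities go through unchanged. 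I would also note that measurability of $A$ and $B$ follows from measurability of $\phi$ together with Fubini (the function $x\mapsto \nu(E_x)$ is measurable), so no extra hypothesis is needed.
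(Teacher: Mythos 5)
Your argument is correct and rests on the same two ingredients as the paper's proof: Fubini's theorem together with the observation that atomicity of $\phi^{*}(x,\nu)$ and $\phi^{*}(\mu,y)$ means $\phi$ is essentially constant on $\mu$-a.a.\ vertical and $\nu$-a.a.\ horizontal slices. The paper reaches the conclusion more directly by setting $f(x)=\int_Y\phi(x,y)\,d\nu(y)$ and $g(y)=\int_X\phi(x,y)\,d\mu(x)$, so that $f(x)=\phi(x,y)=g(y)$ holds $\mu\ttimes\nu$-a.e.\ and forces constancy at once, which sidesteps the cut-point $c$ and the sublevel-set bookkeeping that your contradiction argument has to handle (and does handle correctly).
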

\begin{proof} \
Set $f(x)=\int_Y \phi(x,y)\,d\nu(y)$, for $x\in X$. 
Set $g(y)=\int_X \phi(x,y)\,d\mu(x)$, for $y\in Y$.

Then \
$
f(x)=\phi(x,y)=g(y) \pmod{\mu\ttimes\nu},
$
and the claim of the lemma becomes obvious by Fubini's Theorem.
\end{proof}
\begin{thm}\label{thm:conn1}
Let a metric system $(X,d,T)$  and a steady scale sequence $\bfs=\{s_n\}_1^\infty$        \mbc{thm:conn1}
be given. Let $\mu,\nu\in\MM(T)$ be two $T$-ergodic measures. Assume that $T$ is a 
local contraction $\!\!\pmod{\nu}$. Then the measure $\phi^*(\mu,\nu)$  is atomic.
\end{thm}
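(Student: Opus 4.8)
The plan is to deduce the theorem from Lemma~\ref{lem:singleton} by verifying its two hypotheses through the two relevant cases of Proposition~\ref{prop:connectivity}. Set $Y=X$ and regard $\phi(x,y)=\liminf_{n\to\infty} s_n\,d(T^n x,y)$ as a gauge on the product $P=X\times Y$; it is measurable because $T$ is, so Lemma~\ref{lem:singleton} will apply once its two bullet conditions are checked.

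First I would check that for every $x\in X$ the measure $\phi^*(x,\nu)\in\MM([0,\infty])$ is atomic. This is precisely Proposition~\ref{prop:connectivity}(2): $\bfs$ is steady by hypothesis, $\nu\in\MM(T)$ is $T$-ergodic, and $T$ is a local contraction $\!\!\pmod{\nu}$, so that part applies verbatim. In particular $\phi^*(x,\nu)$ is atomic for $\mu$-a.a.\ $x\in X$, which is the first bullet of Lemma~\ref{lem:singleton}.

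Next I would check that for every $y\in X$ the measure $\phi^*(\mu,y)\in\MM([0,\infty])$ is atomic. A steady scale sequence in particular satisfies the hypothesis of Proposition~\ref{prop:connectivity}(1) (which asks that $\bfs$ be monotone or steady), and $\mu\in\MM(T)$ is $T$-ergodic, so that part gives atomicity of $\phi^*(\mu,y)$ for each $y$, hence for $\nu$-a.a.\ $y\in X$. That is the second bullet of Lemma~\ref{lem:singleton}, so the lemma yields that $\phi^*(\mu,\nu)=\phi^*(\mu\ttimes\nu)$ is atomic, completing the proof.

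There is no genuine obstacle here: the whole content sits in the previously established Proposition~\ref{prop:connectivity} and Lemma~\ref{lem:singleton}, and the argument is an assembly of the two cases of the proposition through the Fubini-type lemma. The only points deserving a moment's care are (i) that a single steady scale sequence simultaneously satisfies the ``monotone or steady'' requirement of part (1) and the ``steady'' requirement of part (2), and (ii) that the local-contraction hypothesis must be attached to $\nu$, the measure occupying the second slot of $\phi^*(\cdot,\cdot)$, which is exactly the orientation in which part (2) is stated.
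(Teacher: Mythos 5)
Your proof is correct and is essentially identical to the paper's: both verify the two hypotheses of Lemma~\ref{lem:singleton} via parts (1) and (2) of Proposition~\ref{prop:connectivity} and then invoke the lemma. No differences worth noting.
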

\begin{proof}
Since \ $\bfs$ \ is steady, it follows from Proposition \ref{prop:connectivity}(1) 
that the measures $\phi^{*}(\mu,y)$ are atomic, for every $y\in Y$.
On the other hand, by Proposition \ref{prop:connectivity}(2) the measures
$\phi^{*}(x,\nu)$ are atomic too, for every $x\in X$. 

It follows from Lemma~\ref{lem:singleton} that 
the measure $\phi^{*}(\mu,\nu)$ is atomic.
\end{proof}
  Note that under the conditions of Theorem \ref{thm:conn1} one cannot claim that         \mbc{2.4}
$\phi^{*}(\mu,\nu)\in\{\delta_0,\delta_\infty\}$  (see Example \ref{ex:gm}).
Some additional assumptions are needed.
\vsp2

\begin{cor}\label{cor:conn1}
Under the conditions of Theorem \ref{thm:conn1}, assume that (at least) one of the 
following conditions holds:
\begin{enumerate}
\item[\em(1)] The measure $\nu$  is decisive (see Definition \ref{def:dec},
              page \pageref{def:dec}).
          
\item[\em(2)] The scale sequence  \bfs  \ is two-jumpy.
\end{enumerate}
Then $\phi^{*}(\mu,\nu)\in\{\delta_0,\delta_\infty\}$, i.\,e.\ the gauge $\phi$ is
$\mu\ttimes\nu$-trivial.
\end{cor}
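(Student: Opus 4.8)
The plan is to show that under either extra hypothesis the atomic measure $\phi^*(\mu,\nu)$, already known to be atomic by Theorem~\ref{thm:conn1}, cannot be supported at a point $c$ with $0<c<\infty$; hence it is $\delta_0$ or $\delta_\infty$, which is exactly $\mu\ttimes\nu$-triviality. By Theorem~\ref{thm:conn1} we may write $\phi^*(\mu,\nu)=\delta_c$ for some $c\in[0,\infty]$, and we must rule out $c\in(0,\infty)$.

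Under hypothesis (2), that $\bfs$ is two-jumpy, this is immediate from what is already available: Theorem~\ref{thm:gsoinf} (with the two $T$-invariant measures $\mu,\nu$) gives $\hat\phi(\mu,\nu)\subset\{0,\infty\}$, i.e.\ $\phi$ is $\mu\ttimes\nu$-extreme; combined with $\phi^*(\mu,\nu)=\delta_c$ atomic, the support is a single point lying in $\{0,\infty\}$, so $\phi^*(\mu,\nu)\in\{\delta_0,\delta_\infty\}$. (Here I would also note that a two-jumpy scale sequence is in particular monotone, so the hypotheses of Theorem~\ref{thm:conn1} that ``$\bfs$ is steady'' should be read in the combined statement of the corollary as covering the nice/two-jumpy case as well, exactly as in the discussion following Proposition~\ref{thm:ergiets2}; alternatively one invokes the more general Theorem~\ref{thm:conn2} referenced there.)

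Under hypothesis (1), that $\nu$ is decisive, the argument goes through the contact-gauge machinery of Section~\ref{sec:dec}. Fix $x\in X$. The slices $\phi(x,\cdot)$ are precisely the contact gauges $\Omega(\,\cdot\,,\bfs,\{T^nx\}_1^\infty)$, so decisiveness of $\nu$ (Definition~\ref{def:dec}) gives $\phi(x,y)\in\{0,\infty\}$ for $\nu$-a.a.\ $y$, i.e.\ $\hat\phi(\delta_x,\nu)\subset\{0,\infty\}$ for every $x$. Integrating in $x$ against $\mu$ via Fubini (exactly as in the proof of Theorem~\ref{thm:gendec}) yields $\phi(x,y)\in\{0,\infty\}$ for $\mu\ttimes\nu$-a.a.\ $(x,y)$, i.e.\ $\hat\phi(\mu,\nu)\subset\{0,\infty\}$. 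Since $\phi^*(\mu,\nu)=\delta_c$ is atomic by Theorem~\ref{thm:conn1}, the single support point $c$ must lie in $\{0,\infty\}$, so again $\phi^*(\mu,\nu)\in\{\delta_0,\delta_\infty\}$, completing the proof.

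The only real subtlety — the ``hard part'' — is purely bookkeeping rather than mathematics: making sure that the extremality input is legitimately available under the stated hypotheses. In case (1) one must check that decisiveness is being applied to the correct fiber (the second coordinate, matching the convention $\phi^*(\mu,\nu)=\phi^*(\mu\ttimes\nu)$ with $\nu$ on the $y$-slot), and that the measurability needed for Fubini holds, which it does because $\phi$ is a Borel gauge. In case (2) one must reconcile the ``steady'' hypothesis of Theorem~\ref{thm:conn1} with the ``two-jumpy'' hypothesis of the corollary; the clean fix is to note that the atomicity conclusion of Theorem~\ref{thm:conn1} only uses monotonicity of $\bfs$ in Proposition~\ref{prop:connectivity}(1)–(2) (the ``steady'' hypothesis can be relaxed to monotone in those parts), and every two-jumpy sequence is monotone by definition, so no genuine gap arises. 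With these points addressed, the corollary follows in both cases from results already proved.
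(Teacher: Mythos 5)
Your proof is correct and follows essentially the same route as the paper's: Theorem~\ref{thm:conn1} gives $\phi^*(\mu,\nu)=\delta_c$, and the support point $c$ is forced into $\{0,\infty\}$ via decisiveness (case (1), exactly the $\hat\phi(x,\nu)\subset\{0,\infty\}$ plus Fubini argument of Theorem~\ref{thm:gendec}) or via Theorem~\ref{thm:gsoinf} (case (2)). The only superfluous part is your worry about reconciling ``steady'' with ``two-jumpy'': the corollary is stated \emph{under the conditions of} Theorem~\ref{thm:conn1}, so $\bfs$ is assumed steady throughout and condition (2) is an additional hypothesis, not a replacement --- no relaxation of Proposition~\ref{prop:connectivity} is needed.
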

\begin{proof}
By Theorem \ref{thm:conn1}, $\phi^{*}(\mu,\nu)=\delta_{c}$, for some $c\in[0,\infty]$.      \mbc{2.42}
Assuming (1), we have  $\phi^{*}(x,\nu)\subset\{\delta_0,\delta_\infty\}$, 
for all $x\in X$. It follows that $\hat\phi(\mu,\nu)\subset\{0,\infty\}$.  
Thus $c\in\{0,\infty\}$.
Assuming (2), the inclusion $\hat\phi(\mu,\nu)\subset\{0,\infty\}$ holds 
in view of Theorem \ref{thm:gsoinf}. Thus $c\in\{0,\infty\}$.
The proof is complete.
\end{proof}

\begin{thm}\label{thm:conn2}
Let $(X,d,T)$ be a metric system and let $\mu,\nu\in\MM(T)$ be two $T$-ergodic measures.   \mbc{thm:conn2}
Assume that\, $T$\, is locally Lipshitz $\!\!\pmod{\nu}$ and that 
$\bfs=\{s_n\}_1^\infty$ is a nice scale sequence.  Then                                    \mbc{2.43}
$\phi^{*}(\mu,\nu)\in\{\delta_0,\delta_\infty\}$ 
(i.\,e., the gauge $\phi$ is $\mu\ttimes\nu$-trivial).
\end{thm}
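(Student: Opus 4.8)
The plan is to combine the ergodic monotonicity argument already used for Theorem~\ref{thm:conn1} with the decisiveness machinery developed around Definition~\ref{def:dec}, so that the value $c$ produced by atomicity is forced into $\{0,\infty\}$ without assuming $\nu$ decisive. First I would record, exactly as in the proof of Proposition~\ref{prop:connectivity}(3), that since $\bfs$ is nice (hence bounded-ratio with $\kappa=\sup_{n\ge1}\tfrac{s_{n+1}}{s_n}<\infty$) and $T$ is locally Lipschitz$\pmod\nu$, the inequality
\[
\phi(x,Ty)\ \le\ \kappa\,D_T(y)\,\phi(x,y)
\]
holds for $\nu$-a.a.\ $y$, for each fixed $x$. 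This shows $\phi(x,\cdot)$ is, up to a bounded multiplicative factor, subinvariant under $T$; combined with $T$-ergodicity of $\nu$ this already gives that $\phi^*(x,\nu)$ is supported on a set of the form $\{0\}$, $\{\infty\}$, or an interval $[c,\kappa' c]$ — i.e.\ it need not be atomic yet, but $\phi(x,y)$ is $\nu$-a.e.\ comparable to a constant. The same reasoning on the other coordinate (Proposition~\ref{prop:connectivity}(1), using that $\bfs$ is steady — wait, nice need not be steady; here one uses only monotonicity, which \emph{is} part of two-jumpy) gives that $\phi^*(\mu,y)$ is atomic for every $y$, via $\phi(Tx,y)\le\phi(x,y)$ and ergodicity of $\mu$.

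Next I would invoke Lemma~\ref{lem:singleton}: once I know $\phi^*(\mu,y)$ is atomic for a.e.\ $y$ and that the fibrewise functions $x\mapsto\int\phi(x,y)\,d\nu$ and $y\mapsto\int\phi(x,y)\,d\mu$ agree a.e., Fubini forces $\phi^*(\mu,\nu)$ to be atomic, say $\phi^*(\mu,\nu)=\delta_c$. (To apply the lemma cleanly I need $\phi^*(x,\nu)$ atomic for $\mu$-a.e.\ $x$; this is where I must upgrade the "comparable to a constant" statement of the previous paragraph to genuine atomicity. For this I would use that $\bfs$ is two-jumpy: the EGT-style argument — or directly Theorem~\ref{thm:gsoinf} applied on $X^2$ with the $T'$ action $(x_1,x_2)\mapsto(Tx_1,x_2)$ — shows $\hat\phi(\delta_x,\nu)\subset\{0,\infty\}$ is too strong in general without invariance of $\delta_x$, so instead I would argue directly: $G(y):=1/\phi(x,y)$ satisfies $G(Ty)\ge (\kappa D_T(y))^{-1}G(y)$, and running the two-jumpy dichotomy of the proof of Theorem~\ref{thm:egt} with the perturbed cocycle $\prod s_{n}^{-1}\prod(\kappa D_T)$ — note $\log(\kappa D_T)$ is $\nu$-integrable-from-above, in fact bounded above by $\log\kappa$ plus something $\le0$ a.e., so it contributes at most linearly and is dominated by the super-linear growth of $s_n$ — yields $\phi(x,y)\in\{0,\infty\}$ $\nu$-a.e.)

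The cleanest route, which I would actually write, is: apply Theorem~\ref{thm:egt} (EGT) on the space $Y=X^{2}$ with the map $T'(x_1,x_2)=(x_1,Tx_2)$, the two-jumpy sequence $\bfs$, and the gauge $f(x_1,x_2)=d(x_1,x_2)$, but \emph{after} replacing $d$ on the second factor by the $T$-adapted metric $d'(y,y')=\sup_{k\ge0} d(T^ky,T^ky')$ (or $\sum 2^{-k}\min(1,d(T^k\cdot,T^k\cdot))$), which is bi-Lipschitz equivalent to $d$ $\nu$-a.e.\ precisely because $T$ is locally Lipschitz$\pmod\nu$ with $D_{T}$ locally bounded — here nice/bounded-ratio is what keeps the Lipschitz distortion under control along the $I_k=[\tfrac{n_k}2,n_k)$ windows in the EGT proof. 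Under $d'$, $T$ on the second factor is a genuine contraction, so the EGT applies verbatim and gives $\phi^*(x,\nu)\in\{\delta_0,\delta_\infty\}$ for $\mu$-a.e.\ $x$; feeding this plus the first-coordinate atomicity into Lemma~\ref{lem:singleton} gives $\phi^*(\mu,\nu)=\delta_c$, and comparing with $\hat\phi(\mu,\nu)\subset\{0,\infty\}$ forces $c\in\{0,\infty\}$, i.e.\ $\phi^*(\mu,\nu)\in\{\delta_0,\delta_\infty\}$.

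The main obstacle I anticipate is making the Lipschitz-distortion control uniform enough over the blocks $I_k$ appearing in the EGT argument: $D_T$ is only locally bounded $\pmod\nu$, not globally bounded, so a single step of $T$ can distort distances by an unbounded amount on a small set, and the EGT proof compounds $m$ steps for $m\in[\tfrac{n_k}2,n_k)$. Resolving this requires either (i) passing to the $T$-adapted metric $d'$ above (cleanest, but one must check $d'$ is still a metric generating the same Borel structure and that $\nu$ is still $T$-invariant and $d'$-separable), or (ii) a more careful Borel–Cantelli estimate showing that the "bad" set where the compounded distortion along $I_k$ exceeds a threshold has density $\to0$, using that $\frac1n\sum_{k<n}\log D_T(T^ky)\to\int\log D_T\,d\nu\le 0$ by the ergodic theorem (this is exactly the "local contraction on average" phenomenon). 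Everything else — the two first-coordinate atomicity statement, Lemma~\ref{lem:singleton}, and the final $c\in\{0,\infty\}$ step via Theorem~\ref{thm:gsoinf} — is routine given the results already in the paper.
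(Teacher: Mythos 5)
Your overall architecture matches the paper's: first-coordinate atomicity and extremality of $\phi^{*}(\mu,y)$ via Proposition~\ref{prop:connectivity}(1) and the EGT, second-coordinate atomicity of $\phi^{*}(x,\nu)$, then Lemma~\ref{lem:singleton} together with $\hat\phi(\mu,\nu)\subset\{0,\infty\}$ to force the atom to sit at $0$ or $\infty$. You also correctly isolate the one delicate point, namely obtaining atomicity of $\phi^{*}(x,\nu)$ when $D_T$ is only finite a.e. But every route you offer for that step is flawed, and each failure has the same source: you repeatedly assume $\log D_T\le 0$ a.e.\ (equivalently $D_T\le1$, i.e.\ that $T$ is a local contraction $\!\!\pmod{\nu}$), whereas the hypothesis is only that $T$ is locally Lipshitz $\!\!\pmod{\nu}$, i.e.\ $D_T<\infty$ a.e. Concretely: (i) the ``perturbed cocycle'' $\prod_{k<n}\kappa D_T(T^ky)$ grows like $e^{cn}$ with $c=\log\kappa+\int\log D_T\,d\nu$, which may be positive or even $+\infty$; a nice scale sequence grows only polynomially, so it does not dominate this, and your claim that the cocycle ``contributes at most linearly'' is unjustified; (ii) the adapted metric $d'(y,y')=\sup_{k\ge0} d(T^ky,T^ky')$ is not bi-Lipschitz equivalent to $d$ $\nu$-a.e.\ in any system where nearby points eventually separate (e.g.\ every weakly mixing IET), so passing to $d'$ changes the gauge; (iii) the Birkhoff-average variant again invokes $\int\log D_T\,d\nu\le0$, which does not follow from the hypotheses.

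The paper's resolution is much simpler and you already have all the pieces for it. From the EGT applied in the first coordinate you know $\phi^{*}(\mu,y)\in\{\delta_0,\delta_\infty\}$ for every $y$, hence $\phi(x,y)\in\{0,\infty\}$ for $\mu\ttimes\nu$-a.a.\ $(x,y)$. Feed this back into your inequality $\phi(x,Ty)\le\kappa D_T(y)\,\phi(x,y)$: since $0<\kappa D_T(y)<\infty$ for $\nu$-a.a.\ $y$ and the set $\{0,\infty\}$ is fixed under multiplication by finite positive numbers, the inequality collapses to $\phi(x,Ty)\le\phi(x,y)$ for $\mu\ttimes\nu$-a.a.\ $(x,y)$ --- no compounding of the distortion over orbit segments is needed at all. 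Ergodicity of $\nu$ then makes $\phi^{*}(x,\nu)$ atomic for $\mu$-a.a.\ $x$, and Lemma~\ref{lem:singleton} finishes exactly as you intended. The missing idea is thus not a distortion estimate but the observation that extremality in one coordinate neutralizes the finite Lipschitz factor pointwise, one step at a time.
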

\begin{proof}
The scale sequence $\bfs$ is two-jumpy, hence monotone.                                   \mbc{2.44}
Proposition~\ref{prop:connectivity}(1) applies to conclude that the set 
$\phi^{*}(\mu,y)$  is atomic for all $y\in X$.  Moreover, the inclusion
\begin{equation}\label{eq:shortdot5}
\phi^{*}(\mu,y)\in\{\delta_0,\delta_\infty\} \quad \text{ (for all }y\in X),
\end{equation}
holds in view of the EGT (Theorem \ref{thm:egt}).  

To conclude the proof we only need to show that $\phi^{*}(x,\nu)$  is atomic
for $\mu$-a.\,a.\ $x\in X$ (see Lemma \ref{lem:singleton}).
This task is accomplished by exhibiting the set
\[
X'=\{x\in X\mid \phi(x,Ty)\leq \phi(x,y) \quad \text{for }\nu\text{-a.\,a.\ }y\in X\},
\]
with the following two properties:\\[2mm]
\hsp{20}(p1)  $\mu(X')=1$.\\[.5mm]
\hsp{20}(p2)  For every $x\in X'$, $\phi^{*}(x,\nu)$ is atomic.\vsp3

Note that the relation \eqref{eq:shortdot5}  implies that
\begin{equation}\label{eq:dotphimunu}
\phi^{*}(\mu,\nu)\subset\{\delta_0,\delta_\infty\}.
\end{equation}
Just as in the proof of 
Proposition \ref{prop:connectivity}(3), the inequality   
\[
\phi(x,Ty)\leq \kappa D_T(y)\phi(x,y) \pmod{\nu(y)}
\]
holds for every $x\in X$. It follows that                                                  \mbc{2.45}
\begin{equation}\label{eq:philong}
\phi(x,Ty)\leq \kappa D_T(y)\phi(x,y) \pmod{\mu\times\nu}.
\end{equation}

In view of \eqref{eq:dotphimunu} and the fact that 
$D_{T}(y)<\infty$  for $\nu$-a.\,a.\ $y\in X$, the relation \eqref{eq:philong} implies 
\[
\phi(x,Ty)\leq \phi(x,y) \pmod{\mu\times\nu},
\]
validating the property (p1). The $T$-ergodicity of $\nu$ implies (p2). 
The proof is complete.
\end{proof}

An important special case of Theorem \ref{thm:conn2} when \ 
$\bfs=\bfs_{\alp}=\{n^\alp\}_{1}^{\infty}$ is a power scale sequence is outlined in        \mbc{2.5}
the following theorem.  Recall that the $\alp$-connectivity gauge (the connectivity 
gauge with $\bfs=\bfs_{\alp}$)  takes the form 
$\phi_{\alp}(x,y)=\liminf_{n\to\infty}\limits n^\alp\,d(T^nx,y)$ 
(see \eqref{eqs:gaugesa}).
\begin{cor}\label{thm:connpower}
Let $(X,d,T)$ be a metric system and let $\mu,\nu\in\MM(T)$ be two $T$-ergodic measures.     \mbc{thm:connpower}
Assume that\, $T$\, is locally Lipshitz $\!\!\pmod{\nu}$. Then all power connectivity
gauges $\phi_{\alp}$ are $\mu\ttimes\nu$-trivial (i.\,e., for every $\alp>0$ 
the inclusion $\phi^{*}_{\alp}(\mu,\nu)\in\{\delta_0,\delta_\infty\}$ holds).
\end{cor}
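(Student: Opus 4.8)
\textbf{Proof plan for Corollary \ref{thm:connpower}.}
The plan is to deduce this statement as a direct specialization of Theorem \ref{thm:conn2}, so the only real work is to verify that a power scale sequence meets the hypotheses of that theorem. First I would recall from the discussion following Definition \ref{def:scalep} that every power scale sequence $\bfs_{\alp}=\{n^{\alp}\}_{1}^{\infty}$ with $\alp>0$ is simultaneously monotone, steady, two-jumpy, and bounded-ratio, hence in particular \emph{nice} (two-jumpy and bounded-ratio). The two-jumpy property is the one that needs a word: $\frac{(2n)^{\alp}}{n^{\alp}}=2^{\alp}$ is a constant exceeding $1$, so $\liminf_{n\to\infty}\frac{s_{2n}}{s_n}=2^{\alp}>1$; and boundedness of $\frac{s_{n+1}}{s_n}=(1+1/n)^{\alp}$ is clear since this ratio decreases to $1$.

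Next I would simply invoke Theorem \ref{thm:conn2} with this scale sequence: given a metric system $(X,d,T)$, two $T$-ergodic measures $\mu,\nu\in\MM(T)$, and the assumption that $T$ is locally Lipschitz $\!\!\pmod{\nu}$, all hypotheses of Theorem \ref{thm:conn2} are in force once we know $\bfs_{\alp}$ is nice. The conclusion of that theorem is exactly $\phi^{*}(\mu,\nu)\in\{\delta_0,\delta_\infty\}$, where $\phi$ is the connectivity gauge built from the scale sequence in question; for $\bfs=\bfs_{\alp}$ this gauge is by definition (see \eqref{eq:phia}) the $\alp$-connectivity gauge $\phi_{\alp}(x,y)=\liminf_{n\to\infty} n^{\alp}\,d(T^n x,y)$. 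Thus $\phi^{*}_{\alp}(\mu,\nu)\in\{\delta_0,\delta_\infty\}$, which is the assertion of the corollary, and since $\alp>0$ was arbitrary this holds for all power connectivity gauges.

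There is essentially no obstacle here — the content is all in Theorem \ref{thm:conn2} (which in turn rests on the EGT, Theorem \ref{thm:egt}, and Lemma \ref{lem:singleton}), and the corollary is an unpacking of notation together with the elementary observation that power sequences are nice. If anything deserves care it is making sure the reader sees that ``$T$ locally Lipschitz $\!\!\pmod{\nu}$'' is precisely the hypothesis carried over unchanged, and that no ergodicity or invariance of $\mu$ beyond what Theorem \ref{thm:conn2} already assumes is being smuggled in. So the write-up would be two or three sentences: state that $\bfs_{\alp}$ is nice, apply Theorem \ref{thm:conn2}, and identify the resulting gauge with $\phi_{\alp}$.
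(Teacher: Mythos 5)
Your proposal is correct and matches the paper's own proof, which likewise deduces the corollary from Theorem \ref{thm:conn2} after noting that power scale sequences are nice (the paper also mentions an alternative route via Corollary \ref{cor:conn1}(2), using that power sequences are two-jumpy and steady). Your explicit verification that $s_{2n}/s_n=2^{\alp}>1$ and $s_{n+1}/s_n=(1+1/n)^{\alp}$ is bounded is exactly the small check the paper leaves implicit.
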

\begin{proof}
Follows from Theorem \ref{thm:conn2}  because power scale sequences $\bfs=\{s_{n}\}$  
are nice.
(Alternative argument: It also follows from Corollary \ref{cor:conn1}(2) because
power scale sequences $\bfs=\{s_{n}\}$ are both two-jumpy and steady).
\end{proof}
\begin{rem} A.\ Quas has constructed a $\lam$-ergodic transformation of $\ui$ such that 
$\phi_1^*(\leb)=\delta_{\infty}$ (personal communication).
\end{rem}

\begin{prop}\label{Hdim}
Let $(T,X,\mu,d)$ be an m.m.p system and $\alpha$ be the Hausdorff dimension of $\mu$ (with respect to the metric $d$). Then $C_{\phi} \leq \frac 1{\alpha}$. 
\end{prop}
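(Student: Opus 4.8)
The plan is to show that for every exponent $\alp>1/\hdim(\mu)$ one has $\phi^*_\alp(\mu,\mu)\neq\delta_0$. Since $C_\phi$ is the supremum of $\{0\}$ together with $\{\alp>0:\phi^*_\alp(\mu,\mu)=\delta_0\}$, this immediately yields $C_\phi\le 1/\hdim(\mu)$ (and if $\hdim(\mu)=0$ there is nothing to prove). The only feature of $\hdim(\mu)$ I will use is the dichotomy that a Borel set of Hausdorff dimension strictly less than $\hdim(\mu)$ is $\mu$-null — valid, e.g., for the lower Hausdorff dimension $\hdim(\mu)=\inf\{\hdim(A):A\in\BB(X),\,\mu(A)>0\}$, which is also the reading under which the asserted bound is sharpest. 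Fix then $\alp$ with $1/\alp<\hdim(\mu)$. Since $\phi_\alp$ is a gauge, hence Borel on $X\ttimes X$, and $\phi_\alp^{-1}(0)=\{(x,y):y\in E_x\}$ with $E_x:=\{y\in X:\liminf_{n\to\infty}n^\alp d(T^nx,y)=0\}$, Tonelli's theorem reduces the goal $(\mu\ttimes\mu)(\phi_\alp^{-1}(0))=0$ to showing that $\mu(E_x)=0$ for every $x\in X$.

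First I would bound the Hausdorff dimension of each slice $E_x$. Unwinding the $\liminf$: if $y\in E_x$ then $d(T^nx,y)<n^{-\alp}$ for infinitely many $n$, so $E_x\subseteq L_x:=\limsup_{n\to\infty}B(T^nx,n^{-\alp})$. For any $s>1/\alp$ the sum $\sum_{n\ge1}\bigl(\operatorname{diam}B(T^nx,n^{-\alp})\bigr)^{s}\le 2^{s}\sum_{n\ge1}n^{-\alp s}$ is finite; since for each $N$ the tail family $\{B(T^nx,n^{-\alp})\}_{n\ge N}$ covers $L_x$ by balls of diameter $\to 0$ whose total $s$-cost $\to 0$ as $N\to\infty$, the convergence half of the Hausdorff--Cantelli lemma gives $\mathcal H^{s}(L_x)=0$, hence $\hdim(E_x)\le\hdim(L_x)\le s$. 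Letting $s\downarrow 1/\alp$ gives $\hdim(E_x)\le 1/\alp<\hdim(\mu)$, so $\mu(E_x)=0$ by the dichotomy above. Feeding this back through Tonelli yields $(\mu\ttimes\mu)(\phi_\alp^{-1}(0))=0$, so $\phi^*_\alp(\mu,\mu)(\{0\})=0$ and in particular $\phi^*_\alp(\mu,\mu)\neq\delta_0$. (Adjoining the extremality of $\phi_\alp$ from Corollary~\ref{cor:gsoinf} sharpens this to $\phi^*_\alp(\mu,\mu)=\delta_\infty$, but that is not needed for the inequality $C_\phi\le 1/\hdim(\mu)$.)

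I do not anticipate a genuine obstacle here: the argument is the standard ``$\limsup$-set covering'' computation behind Borel--Cantelli-type upper bounds for Hausdorff dimension, glued to Tonelli's theorem, and it uses no dynamics beyond forming a single orbit — in particular no invariance of $\mu$. The one point that warrants an explicit remark is the convention for $\hdim(\mu)$, as indicated above (with a more generous notion of the dimension of a measure, the asserted bound can fail, e.g.\ for measures with atoms). A second, minor point is that $X$ is separable by the standing hypotheses of Section~\ref{sec:notation}, so Tonelli applies without comment, though the core dimension estimate does not rely on it.
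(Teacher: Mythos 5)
Your core computation---$E_x\subseteq\limsup_n B(T^nx,n^{-\alp})$, and the convergence half of the Hausdorff--Cantelli lemma forces this $\limsup$ set to have Hausdorff dimension at most $1/\alp$---is exactly the engine of the paper's proof, just run in the contrapositive direction (the paper fixes $s<C_\phi$ and concludes that $\mu$ assigns full measure to a set of dimension at most $1/s$). The gap is in your very last step, where you convert the dimension bound on $E_x$ into $\mu(E_x)=0$. That dichotomy holds only for the \emph{lower} Hausdorff dimension $\inf\{\hdim(A):\mu(A)>0\}$, as you acknowledge; but the dimension intended in Proposition~\ref{Hdim} is $\inf\{\hdim(A):\mu(A)=1\}$ --- this is what the paper's argument actually bounds, and Remark~\ref{rem:Hdim con2} introduces the lower dimension as a separate notion precisely because it is not the one used here. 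The two can differ: for $\mu=\tfrac12\delta_{x_0}+\tfrac12\lam$ on $[0,1]$ the full-measure dimension is $1$ while the lower dimension is $0$, and the singleton $\{x_0\}$ is a positive-measure set of dimension $0$, so your dichotomy fails for the intended notion. Consequently your inequality (reciprocal of the lower dimension) is strictly weaker than the stated one, and your aside that the lower reading makes the asserted bound ``sharpest'' is backwards: a smaller $\alpha$ only weakens $C_\phi\leq 1/\alpha$.

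The repair is one line and keeps your architecture intact. From $\hdim(E_x)\leq\hdim(L_x)\leq 1/\alp<\hdim(\mu)$ conclude only that $\mu(E_x)<1$ for every $x$ (a Borel set of dimension strictly below $\inf\{\hdim(A):\mu(A)=1\}$ cannot carry full measure). Then
\[
(\mu\ttimes\mu)\big(\phi_\alp^{-1}(\{0\})\big)=\int_X\mu(E_x)\,d\mu(x)<1,
\]
since a $[0,1]$-valued function that is everywhere strictly less than $1$ cannot integrate to $1$. That already gives $\phi_\alp^*(\mu,\mu)\neq\delta_0$, hence $C_\phi\leq\alp$, and letting $\alp$ decrease to $1/\hdim(\mu)$ finishes the proof at full strength. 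As written, what you have proved is the variant recorded in Remark~\ref{rem:Hdim con2} (a bound by the reciprocal of the lower Hausdorff dimension) rather than Proposition~\ref{Hdim} itself.
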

\begin{proof} Let $s<C_{\phi}(\mu)$. So $\mu(\LS B(T^ix,\frac 1 {i^s}))=1$ for $\mu$ almost every $x$. Because $\boundsum (\frac {1} {n^s})^{\frac 1 s +\epsilon}$ converges for any $\epsilon>0$, and $\underset{i=t}{\overset{\infty}{\cup}}B(T^ix,\frac 1 {i^s})$ covers $\LS B(T^ix,\frac 1 {i^s})$ for all $t$, the Hausdorff dimension of $\LS B(x,\frac 1 {n^s}) \leq \frac 1 s$. Thus
$\mu$ assigns full measure to a set of $H_{dim} \leq \frac 1 s$. Because $s$ can be arbitrarily close to $C_{\phi}$ the proposition follows.
\end{proof}
\begin{rem} \label{rem:Hdim con2}
One can define the upper connectivity constant by ${\inf \{\alpha: \phi^*_{\alpha}(\mu,\mu)(0)>0\}}$. One can define the lower Hausdorff dimension of a measure to be the infimum of the Hausdorff dimensions of all Borel sets of positive $\mu$ measure. By a similar argument one obtains that the upper connectivity constant is less than the reciprocal of the lower Hausdorff dimension of $\mu$. It is easy to construct H\"{o}lder examples where the upper and lower connectivity constants are different (these examples can show that the Lipshitz assumption in Theorem \ref{thm:conn2} and Corollary \ref{thm:connpower} is important).
\end{rem}
\begin{rem} The inequality in this proposition is sharp. For every $n\geq2$ there are 
(Lebesgue) ergodic rotations of the $n$-torus that have connectivity constant 0. 
(For $n=1$, the connectivity constant is always $1$).
Lebesgue measure on the $n$-torus has Hausdorff dimension $n$.
\end{rem}
\begin{rem}\label{rem:Hdim prox} 
An analogous proof shows that $C_{\psi} \leq \frac 1{ H_{dim}(\mu)}$. (Compare with
Proposition \ref{Hdim} and Theorem \ref{thm:bosh3}).
\end{rem}
\section{\bf \large Two Results on the Proximality Constant} \label{sec:prox}
\begin{prop} \label{proxiet} Let $T$ be an IET then $C_{\psi}(T)\leq \frac 1 2$.
\end{prop}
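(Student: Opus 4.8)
The plan is to show that for every $\alpha>\tfrac12$ one has $\psi_\alpha(x,y)>0$ for $\lambda\times\lambda$-almost every $(x,y)\in\ui^2$. Since $\psi_\alpha$ is $\lambda\times\lambda$-extreme (Corollary~\ref{cor:gsoinf}), this is equivalent to $\psi^*_\alpha(\lambda,\lambda)=\delta_\infty$, in particular $\psi^*_\alpha(\lambda,\lambda)\neq\delta_0$; as $\alpha>\tfrac12$ is arbitrary, $C_\psi(T)\le\tfrac12$ then follows directly from the definition of $C_\psi$. Write $D_n(x,y)=d(T^nx,T^ny)$. First I would record the elementary ``run'' structure: since $T$ is a piecewise isometry of the circle $\ui$, $D_{n+1}(x,y)=D_n(x,y)$ unless $T^nx$ and $T^ny$ lie in different maximal intervals of continuity of $T$ (of which there are at most $r$, after merging adjacent exchanged intervals carrying the same translation). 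Thus $n\mapsto D_n(x,y)$ is constant on maximal runs of consecutive integers, and the only way $D_n$ can become small is that a run of small value has just begun.

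The main work is the following quadratic estimate. Let $\eta_0=\eta_0(T)>0$ be smaller than every gap between two non-adjacent intervals of the image partition of $T$ (one may take $\eta_0=\min_k\ell_k$). Then for $0<\delta<\eta_0$,
\[
   (\lambda\times\lambda)\bigl(\{\,(x,y)\mid D_0(x,y)\neq D_1(x,y)\ \text{and}\ D_1(x,y)<\delta\,\}\bigr)\ \le\ 2r\,\delta^{2}.
\]
Indeed, if $x,y$ lie in different continuity intervals of $T$ but $d(Tx,Ty)<\delta<\eta_0$, the two image intervals containing $Tx,Ty$ must be adjacent on the circle, $Tx$ lies within $\delta$ of their common endpoint and $Ty$ within $\delta$ on the other side; pulling back by the relevant translation branches of $T$ confines $x$ to a subinterval of length $\le\delta$ at one end of its continuity interval and $y$ to a subinterval of length $\le\delta$ at one end of its own, so each admissible ordered pair of continuity intervals contributes at most $\delta^2$, and there are at most $2r$ of them. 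This is exactly where the exponent $\tfrac12$ (rather than $1$) comes from: the single constraint ``$d(Tx,Ty)<\delta$'' only gives $O(\delta)$, which would yield the weaker bound $C_\psi(T)\le1$, whereas the piecewise-isometry structure forces \emph{both} $x$ and $y$ to be $\delta$-close to endpoints, producing the square.

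With this in hand the argument is a Borel--Cantelli estimate over the events in which a run of a prescribed size begins early. For $j\ge1$ and $n\ge1$ put $R_{j,n}=\{(x,y)\mid D_{n-1}\neq D_n,\ 2^{-j-1}\le D_n<2^{-j}\}$. Since $T\times T$ preserves $\lambda\times\lambda$ and $D_m(x,y)=D_0\bigl((T\times T)^m(x,y)\bigr)$, we have $R_{j,n}=(T\times T)^{-(n-1)}R_{j,1}$, so the estimate above gives $(\lambda\times\lambda)(R_{j,n})\le 2r\,2^{-2j}$ once $2^{-j}<\eta_0$. Setting $N_j=2^{(j+1)/\alpha}$ and $\widetilde R_j=\bigcup_{1\le n\le N_j}R_{j,n}$ yields $(\lambda\times\lambda)(\widetilde R_j)\le 2r\,2^{-2j}N_j\le C_T\,2^{j(1/\alpha-2)}$, and $\sum_j(\lambda\times\lambda)(\widetilde R_j)<\infty$ precisely because $\alpha>\tfrac12$. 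Conversely, if $\psi_\alpha(x,y)<1$ while $D_0(x,y)>0$ (the latter holding a.e.), choose $n$ arbitrarily large with $D_n(x,y)<n^{-\alpha}$, let $[a,b]\ni n$ be the run containing $n$; for $n$ large $a\ge1$ (since $D_a=D_n\to0$ while $D_0$ is a fixed positive number), and $D_a=D_n<n^{-\alpha}\le a^{-\alpha}$. If $j$ is the index with $2^{-j-1}\le D_a<2^{-j}$, then $2^{-j-1}<a^{-\alpha}$, so $a<N_j$ and $(x,y)\in\widetilde R_j$; letting $n\to\infty$ forces $j\to\infty$, so $(x,y)\in\limsup_j\widetilde R_j$, a null set. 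Hence $\psi_\alpha\ge1$ $\lambda\times\lambda$-a.e., completing the plan.

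The one step I expect to require genuine care is the quadratic estimate; everything else is bookkeeping (the precise notion of continuity interval after merging spurious discontinuities, the $T$-dependent threshold $\eta_0$, and the reduction to $D_0>0$), and I anticipate no further obstacle. I would also note that the same scheme, with the bound $2^{-2j}$ on $(\lambda\times\lambda)(R_{j,n})$ improved to $2^{-(1+c)j}$, gives $C_\psi(T)\le\frac1{1+c}$, which is presumably the mechanism behind the sharper $\tau$-entropy bound announced for the next section.
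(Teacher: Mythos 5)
Your proof is correct and rests on exactly the same mechanism as the paper's: the event that $d(T^nx,T^ny)$ changes value and drops below $\delta$ forces \emph{both} orbit points to lie within $\delta$ of a discontinuity, giving a measure bound of order $\delta^2$ by measure preservation of $T\times T$, after which Borel--Cantelli finishes the argument. Your dyadic grouping into the sets $\widetilde R_j$ and the run-start bookkeeping are just a (somewhat more carefully written) repackaging of the paper's sets $U_n=\{(x,y): d(T^nx,T^ny)<\min(d(T^{n-1}x,T^{n-1}y),n^{-c})\}$.
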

\begin{proof} 
Consider $U_n=\{(x,y): d(T^nx,T^ny)<\min (d(T^{n-1}x,T^{n-1}y),\frac1{n^{c}})\}.$ 
If $x$ appears in the first coordinate then $d(x,\delta_i)< \frac 1 {n^c}$  for some 
discontinuity $\delta_i$. The measure of such points is $\frac{2(r-1)} {n^c}$ (if $T$ is 
an $r$-IET). For a fixed\, $x$, the inequality 
${\lambda({x}\times[0,1)\cap S_n)\leq \frac{2}{n^c}}$ holds. Therefore 
$\lambda^{(2)}(U_n)<\frac{4(r-1)}{n^{2c}}$, and \, $\sum \lambda^{(2)}(U_n)$ converges
if  $c>\frac 1 2$. By the Borel-Cantelli Theorem, $C_{\psi}(T)\leq \frac 1 2$.
\end{proof}
\begin{prop} \label{proxlinrec} Let $S:X \to X$ be a linear recurrent dynamical system 
and $\bar{d}$ be the standard metric, 
$\bar{d}(\bar{x},\bar{y})= \frac 1 {{\min}\{i:\,x_i \neq y_i\}}$ and $\mu$ its unique ergodic 
measure. Then $C_{\psi}(X,\mu,\bar{d})\leq \frac 1 2$.
\end{prop}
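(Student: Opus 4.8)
The plan is to run the argument of Proposition~\ref{proxiet} in the symbolic setting, with the finitely many discontinuities of an IET replaced by the boundedly many \emph{left-special factors of each length} of a linearly recurrent subshift. Write $(X,S)$ for the subshift ($X$ a set of one-sided sequences, $S$ the shift), let $L$ be its linear recurrence constant and $\mathcal L(X)$ its language. I will use two classical facts about linearly recurrent subshifts, with constants depending only on $L$: (i) every cylinder satisfies $\mu([w])\le C/|w|$; and (ii) for every $n$ the number of left-special factors of length $n$ is at most $K$. I also use the $\mu\ttimes\mu$-full-measure fact that a.e.\ pair $(x,y)$ is not forward asymptotic, so $\bar d(S^mx,S^my)>0$ for every $m$.

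Fix $\alpha'>0$ and set $\ell_m=\lceil m^{\alpha'}\rceil-1$. First I would unwind the metric: since $(S^{m-1}x)_1=x_m$, one has $\bar d(S^{m-1}x,S^{m-1}y)=1$ exactly when $x_m\neq y_m$, while $\bar d(S^mx,S^my)>\bar d(S^{m-1}x,S^{m-1}y)$ whenever $x_m=y_m$. Hence
\[
U_m:=\big\{(x,y)\in X^2:\ \bar d(S^mx,S^my)<\min\!\big(\bar d(S^{m-1}x,S^{m-1}y),\,m^{-\alpha'}\big)\big\}
\ \subseteq\ \{x_m\neq y_m\}\cap\{x_{m+i}=y_{m+i},\ 1\le i\le \ell_m\},
\]
and on $U_m$ the common word $w=x_{m+1}\cdots x_{m+\ell_m}=y_{m+1}\cdots y_{m+\ell_m}$ admits the two distinct left extensions $x_m\ne y_m$, so $w$ is left-special.

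Next I would bound $\mu\ttimes\mu(U_m)$. Translating to position $1$ by $S\ttimes S$-invariance and using $\sum_{a}\mu([aw])=\mu([w])$,
\[
\mu\ttimes\mu(U_m)\ \le\ \sum_{w\in LS_{\ell_m}}\ \sum_{\substack{a\ne b\\ aw,\,bw\in\mathcal L(X)}}\mu([aw])\,\mu([bw])\ \le\ \sum_{w\in LS_{\ell_m}}\mu([w])^2\ \le\ K\Big(\tfrac{C}{\ell_m}\Big)^{2}\ =\ O\!\big(m^{-2\alpha'}\big),
\]
where $LS_{\ell_m}$ denotes the set of left-special factors of length $\ell_m$, the vanishing of terms with $w$ not left-special is used, and the last two inequalities are (ii) and (i). Thus $\sum_m\mu\ttimes\mu(U_m)<\infty$ once $\alpha'>\tfrac12$, and Borel--Cantelli gives that $\mu\ttimes\mu$-a.e.\ $(x,y)$ lies in only finitely many $U_m$.

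Finally I would close the loop as in Proposition~\ref{proxiet}: fix $\alpha>\tfrac12$, suppose $\psi_\alpha(x,y)<\infty$, and pick $\alpha'\in(\tfrac12,\alpha)$; then $\psi_{\alpha'}(x,y)=0$, so $\bar d(S^nx,S^ny)<n^{-\alpha'}$ for infinitely many $n\to\infty$. For each such $n$ let $m_0=m_0(n)$ be the least $m\le n$ with $\bar d(S^mx,S^my)\le\bar d(S^nx,S^ny)$; then $\bar d(S^{m_0}x,S^{m_0}y)<\bar d(S^{m_0-1}x,S^{m_0-1}y)$ and $\bar d(S^{m_0}x,S^{m_0}y)<n^{-\alpha'}\le m_0^{-\alpha'}$, so $(x,y)\in U_{m_0}$; and since $\bar d(S^{m_0(n)}x,S^{m_0(n)}y)\to 0$ while $\bar d(S^mx,S^my)>0$ for every fixed $m$, we get that $m_0(n)$ takes arbitrarily large values, i.e.\ $(x,y)\in U_m$ for infinitely many $m$ --- contradicting the previous paragraph. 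So, taking $\alpha\downarrow\tfrac12$, $\psi_\alpha(x,y)=\infty$ $\mu\ttimes\mu$-a.e.\ for every $\alpha>\tfrac12$, whence $C_\psi(X,\mu,\bar d)\le\tfrac12$. The main obstacle is input (ii): it is precisely the boundedness of the number of left-special factors per length (the linearly recurrent analogue of ``finitely many discontinuities'') that upgrades the naive bound $\mu\ttimes\mu(U_m)=O(m^{-\alpha'})$, which would only give $C_\psi\le1$, to $O(m^{-2\alpha'})$; one should confirm that the operative definition of ``linearly recurrent'' supplies both (i) and (ii) with constants depending only on $L$, and dispatch the non-injectivity of the one-sided shift via the forward-asymptotic null set.
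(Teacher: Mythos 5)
Your proposal is correct and follows essentially the same route as the paper: the paper considers the identical sets $U_n$, notes that membership forces a disagreement at one coordinate followed by agreement on the next $\approx n^c$ coordinates (so the shared word is left-special), and combines the cylinder bound $\mu([w])\le C/|w|$ with the bounded number of such words per length (via linear block growth and \cite{lin block}) to get $\mu^{(2)}(U_n)=O(n^{-2c})$ and conclude by Borel--Cantelli. Your write-up merely makes explicit the translation step, the final passage from Borel--Cantelli to $\psi_\alpha=\infty$ a.e., and the null set of forward-asymptotic pairs, all of which the paper leaves implicit.
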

We review some facts about linear recurrent transformations. 
Linear recurrent transformations are uniquely ergodic. If $\mu$ is the unique invariant measure then 
$\frac 1 {Cn}<\mu(x_1,x_2,x_3,...,x_n,*)<\frac C n$ where $C$ is a constant depending only 
on the transformation ($(x_1,x_2,...,x_n,*$ denotes all words beginning $(x_1,x_,...x_n)$). They have linear block growth.
Systems of linear block growth have bounded difference between the number of allowed 
$k$ blocks and allowed $k+1$ blocks \cite{lin block}.
\begin{proof}
 Consider $U_n=\{(x,y):  d(S^n\bar{x},S^n\bar{y})<\min (d(T^{n-1}\bar{x},T^{n-1}\bar{y}), 
 \frac 1 {n^{c}})\}.$ If $(\bar{x},\bar{y}) \in U_n$ then $x_{n-1}\neq y_{n-1}$ and 
 $x_{n+k}=y_{n+k}$ for $0 \leq k <n^c$. Let $K$ be the largest difference between the number 
 of allowed $k+1$ blocks and $k$ blocks.  Then $\mu^{(2)}(U_n)\leq \frac{K^2C}{n^{2c}}$. 
\end{proof}
\section{\bf \large Proof of Theorem \ref{thm:phi1mu}}                                                  \mbc{new\\section}
The following lemma (implicit in \cite{nomixing}) will be used in the proof.
\begin{lem}\label{lem:1} 
Let  $(\ui,T)$  be a minimal $r$-IET and let $\mu\in\MM(T)$ be an invariant measure.          \mbc{lem:1}
Then, for every $\eps>0$, there are a subinterval $J\subset\ui$ and an integer $N\in\N$ 
such that: 
\begin{enumerate}
\item $\mu(J)< \epsilon$;
\item $\mu\big(\bigcup_{n=0}^{N-1}\limits T^n(J)\big)\geq \frac1r $;
\item the sets $J, T(J),...,T^{N-1}(J)$ are pairwise disjoint subintervals of\, $\ui$.
\end{enumerate}
\end{lem}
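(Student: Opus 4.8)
The plan is to use the Rokhlin-tower structure attached to an IET via the first-return map to a suitable small subinterval. Concretely, I would pick a point $p\in\ui$ that is a discontinuity point of $T$ (or an endpoint of one of the exchanged intervals $\ui_k$), and for small $\delta>0$ consider a subinterval $J=J_\delta$ of length $\delta$ having $p$ as its right endpoint (or left endpoint, whichever keeps the tower ``clean''). The first return map $T_J\colon J\to J$ induced by $T$ is again an IET on $J$ with at most $r+2$ exchanged subintervals (this is the standard Rauzy–Veech induction fact; for $J$ abutting a discontinuity of $T$ one gets the sharper bound that $T_J$ is an IET with at most $r$ intervals). This partitions $J$ into finitely many subintervals $J^{(1)},\dots,J^{(s)}$, $s\le r$, with constant return times $N_1,\dots,N_s$, and the sets $\{T^i(J^{(j)})\mid 0\le i<N_j,\ 1\le j\le s\}$ are pairwise disjoint and, by minimality, cover all of $\ui$ up to a $\mu$-null set (in fact cover $\ui$ entirely if one is careful with endpoints). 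Thus $\sum_{j=1}^s N_j\,\mu(J^{(j)})=\mu(\ui)=1$, so at least one column has $\mu$-measure $\sum_{i=0}^{N_j-1}\mu(T^i(J^{(j)}))\ge \frac1s\ge\frac1r$.

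Now I would just take $J$ to be that single subinterval $J^{(j_0)}$ of the base achieving $\ge\frac1r$, and $N=N_{j_0}$. Properties (2) and (3) are then immediate from the tower construction: the levels $J^{(j_0)},T(J^{(j_0)}),\dots,T^{N-1}(J^{(j_0)})$ are pairwise disjoint intervals (disjointness because they sit inside distinct levels of a Rokhlin tower; each is an interval because $T$ iterated is piecewise an isometry and $J^{(j_0)}$ was chosen small enough to lie in a single continuity interval of $T^i$ for each $0\le i<N$ — which is exactly what the definition of the return-time partition guarantees), and their union has $\mu$-measure $\ge\frac1r$. For property (1), $\mu(J)=\mu(J^{(j_0)})\le\mu(J_\delta)$, and since $\mu$ is a continuous (non-atomic) measure — any invariant measure of a minimal IET is non-atomic, as an atom would have a dense orbit of atoms of equal mass, contradicting finiteness — we have $\mu(J_\delta)\to 0$ as $\delta\to 0$. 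So choosing $\delta$ small enough forces $\mu(J)<\eps$.

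The one point requiring care, and the main obstacle, is making sure that shrinking $\delta$ does not destroy the measure bound in (2): a priori the combinatorics of the return-time partition of $J_\delta$ (the number $s$ of columns and the return times $N_j$) changes as $\delta$ shrinks. The resolution is that the bound $s\le r$ holds uniformly in $\delta$ when $J_\delta$ is anchored at a discontinuity of $T$ (this is the content of the remark in \cite{nomixing} that the lemma is ``implicit'' there), so $\frac1s\ge\frac1r$ regardless of how small $\delta$ is; the identity $\sum_j N_j\mu(J^{(j)})=1$ always holds by disjointness and the covering property, hence some column always has measure $\ge\frac1r$. So I would first fix a discontinuity point $p$ of $T$, then let $\delta\to0$ using non-atomicity of $\mu$ to get (1), while (2) and (3) survive automatically for every $\delta$. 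Finally I should double-check the covering/disjointness claim: disjointness of the levels within a single column is clear, disjointness across columns follows because for $(j,i)\ne(j',i')$ either the levels lie over disjoint base pieces or, if $i\ne i'$, the fact that all of $J$ returns to $J$ only at the prescribed times keeps intermediate iterates outside $J$ and pairwise separated — this is precisely the standard Kakutani–Rokhlin skyscraper over $T_J$, and minimality gives that the skyscraper exhausts $\ui$.
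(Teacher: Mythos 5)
Your proposal is correct and follows essentially the same route as the paper's proof: induce $T$ on a small subinterval to get an $s$-IET with $s\le r$, observe that the resulting Kakutani--Rokhlin towers partition $\ui$ by minimality, and pigeonhole to find one column of $\mu$-measure at least $\frac1s\ge\frac1r$. The extra details you supply (anchoring the base interval at a discontinuity to keep $s\le r$ uniformly, and using non-atomicity of $\mu$ to make $\mu(J)<\eps$) are exactly what the paper's terser opening sentence implicitly relies on.
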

\begin{proof} Pick a subinterval $I\subset\ui$, $0<\mu(I)<\epsilon$,  such that             \mbc{proof\\2.6}
the induced map  $T'$  on $I$ is an  $s$-IET, for some  $2\leq s\leq r$.  
Let  $I_k\subset I$,  $1\leq k\leq s$,  be the subintervals of  $I$  exchanged 
by  $T'$,     $T'(I_k)=T^{N_{k}}I_k.$

By minimality, the images $T^n(I_k)$ cover $\ui=[0,1)$ before returning to $I$.
More precisely,  the family of subintervals
$\big\{T^n(I_k) \mid1\leq k\leq s, 0\leq n\leq N_k \big\}$
partition the interval\,  $\ui=[0,1)$. 

Thus\, $\mu\Big(\bigcup_{n=0}^{N_k} T^n(I_k)\Big)\geq\tfrac1s\geq\tfrac 1r$,\,
for some  $k, 1\leq k\leq s.$
To complete the proof of the lemma, one takes  $J=I_k$\, and $N=N_k$.
\end{proof}
\begin{proof}[Proof of Theorem {\em \ref{thm:phi1mu}}]
Given an IET $T$ with a  $T$-ergodic measure $\mu\in\MM(T)$,  we have to show  that         \mbc{proof\\thm:phi1mu}
\mbox{$\phi^{*}_{1}(\mu,\mu)=\delta_0$}. Since $\mu$ is ergodic, it is supported by a 
minimal component $K$  which is either a finite set (permuted cyclically by\, $T$), or 
a finite collection of subintervals of $\ui$.  (It is well known that the domain of an
IET admits partition into a finitely many periodic and continuous components; each such 
component is a finite union of subintervals. For an algorithmic way of finding this 
decomposition in certain situations see e.\,g.\ \cite{rank2}).

In the first case $\phi^{*}_{1}=\delta_0$ is immediate (because then 
$\phi(x,y)=0$ for all $x,y\in K$).

In the second case, $T|_K$  is itself a minimal IET (after making the subintervals of       \mbc{2.7}
$K$ adjacent to each other and rescaling). 
Thus without loss of generality we may assume that the original $T$ is a minimal IET
(and then $\mu$ is continuous).

We first claim that the gauge $\phi_{1}(x,y)=\liminf_{n\to\infty}\limits\, n|T^n(x)-y|$\,     
is\, $\mu\ttimes\mu$-trivial, i.\,e.\ that\, 
$\phi^{*}_1(\mu,\mu)\in\{\delta_0,\delta_\infty\}$.
This follows from Corollary \ref{cor:conn1}(2) (alternatively, from 
Corollary~\ref{thm:connpower}).

It remains to show that $\phi^{*}_1(\mu,\mu)=\delta_\infty$ leads to a contradiction.

For every integer  $k\geq2$, one takes an interval $J_k$ and an integer $N_k$ as in            \mbc{2.75}
Lemma \ref{lem:1} with  $\eps=\eps_{k}=\frac1{3rk}$. 

Then the inequality $N_k|J_k|\geq\frac1r$ implies 
$\frac1{3rk}=\eps_{k}>|J_k|\geq \frac1{rN_k}$ whence $N_k>3k$.  Set\,  
$m_k=\big[N_k/3\big]\geq k$ and define the sets
\[
A_k=\bigcup_{i=0}^{m_k-1}\limits T^i(J_{k}), \qquad
B_k=\!\!\!\bigcup_{i=N_k-m_k}^{N_k-1}\limits\!\!\!\! T^i(J_{k}), \qquad
C_k=A_k\times B_k\subset \ui^2.
\]

We estimate\,
$
\mu(A_k)=\mu(B_k)=m_k\mu(J_k)\geq\frac{m_k}{rN_k}\geq\frac{m_k}{r(3m_k+2)}\geq\frac1{5r}
$\,
whence\,  $(\mu\times\mu)(C_k)\geq\frac1{25r^2}$, for all $k$.

It follows that\,  $(\mu\times\mu)(D)\geq\frac1{25r^2}$\,  where\,
$D=\limsup_{k\to\infty}\limits\, C_k=
       \bigcap_{n\geq2}\limits\!\big(\!\bigcup_{k=n}^\infty\limits C_k\big)\subset\ui^2$.

Observe that if $(x,y)\in C_{k}$  then there is an $m$, $\frac{N_k}3<m<N_k$, such that      \mbc{2.8}
the points $T^m(x)$ and $y$ lie in the same subinterval $T^j(J_{k})$ (for some 
$j\in[N_k-m_k,N_k]$) of length $|J_{k}|<\frac1{N_{k}}$. 

It follows that for all\, $k\geq2$
\[
     \min_{\frac{N_k}3\leq m<N_{k}}\limits\! m\,|T^{m}(x)-y|\leq
     \min_{\frac{N_k}3\leq m<N_{k}}\limits\!\! N_K\,|T^{m}(x)-y|<1, \qquad 
     \text{for }\,(x,y)\in C_k.
\]

Since $N_k>k$, the definition of $D$ and the last inequality imply
\[
\phi_1(x,y)=\liminf_{n\to\infty}\limits n|T^n(x)-y|\leq1,\quad \text{ for }(x,y)\in D.
\]
Now the assumption $\phi^{*}(\mu,\mu)=\delta_{\infty}$ would imply that                      \mbc{2.9}
$(\mu\times\mu)(D)=0$, a contradiction with the earlier conclusion that 
$(\mu\times\mu)(D)\geq\frac1{25r^{2}}$.
\end{proof}

\begin{rem} The above proof and Theorem \ref{thm:phi1mu} extend to many finite rank 
transformations.
\end{rem}

\section{\bf \large Proof of Theorem \ref{thm:noptim}}
Given  a sequence ${\bf s}=(s_k)_1^\infty$ of positive real numbers  such that                \mbc{NEW SEC\\
																						    thm:noptim\\
																						    proof}
\begin{equation}\label{eq:sinf}
\lim_{n\to\infty}\limits s_n/n=\infty,
\end{equation}
we have to construct an irrational\, $\alpha \in \R\!\setminus\! \Q$\, such that\,              \mbc{eq:sinf}
$
\lim_{n\to\infty}\limits s_n\fracpart{n \alpha - y}=\infty,
$
for Lebesgue a.\,a. $y\in\ui$.

For  $a,r\in\R$,  denote   $B(a,r)=\{x\in\ui\mid\fracpart{x-a}<r\}$.                        \mbc{eq:balls}
It is easy to verify that
\begin{equation}\label{eq:balls}
        \lam\big(B(a_1,r)\!\setminus \!B(a_2,r)\big)\leq \fracpart{a_2-a_1}, \quad 
        \text{ for all }\,  a_1,a_2,r\in\R,
\end{equation}
and that  $\lam(B(a,r))=2r$\,  if\,   $0\leq r\leq 1/2$.                                      \mbc{3.0}

We may assume  that the sequence  ${\bf s}=(s_k)$  is non-decreasing.
(Otherwise we replace it  by the sequence   ${\bf s'}=(s'_k)$  where\,  
$s_k'=k\cdot(\min_{n\geq k}\limits\,  s_n/n)$.   The new sequence
${\bf s'}$  is strictly increasing, is dominated by ${\bf s}$  and 
\eqref{eq:sinf}  implies that $\lim_{n\to\infty}\limits s'_n/n=\infty$\,  holds).

The required  $\alp\in\ui\cap(\R\setminus\Q)$  will be determined by its 
continued fraction                                                                             \mbc{eq\\alpcf\\
																							3.1}
\begin{equation}\label{eq:alpcf} 
\alp=\cf{0, a_1, a_2,\ldots }\bydef\frac1{a_1+\frac1{a_2+\ldots}}\qquad  a_k\in\N.
\end{equation}

Recall some basic facts from the theory of continued fractions  \cite{Khinchin}.  
The numbers    $\alp\in\ui\cap(\R\setminus\Q)$  are in the one-to-one 
correspondence with the sequences  $(a_k)_1^\infty$, their continued fraction 
expansions.  The convergents of  $\alp$  are defined by formula
 $\alp_k=p_k/q_k=\cf{0, a_1,\ldots, a_k}$,
where the sequences   $(p_k), (q_k)$  are defined by the initial conditions
$p_0=0$, $q_0=1$, $p_1=1$, $q_1=a_1$  and the recurrent relation                              \mbc{3.2}
$x_k=a_k x_{k-1}+x_{k-2}$,  satisfied by both for  $k\geq2$.

The following inequality (standard in the theory of continued fractions) 
will be used later:                                                                           \mbc{3.3\\ineq:qk} 
\begin{equation}\label{ineq:qk}
\fracpart{\alp q_n}<1/q_{n+1}, \quad n\geq1.
\end{equation}
For an irrational number  $\alp\in\ui$, define
\[
A_{k,c}(\alp)=\{x\in\ui\mid s_n\fracpart{n\alp-x}<c,  \text{ for some } 
n\in[q_k,q_{k+1})\cap\N\},
\]
where  $q_k$  (the denominators of convergents)  are determined by  $\alp$.    
By Borel Cantelli lemma, in order to prove Theorem \ref{thm:noptim},
it would suffice to find an irrational number  $\alp\in\ui$
such that                                                                                   \mbc{3.4\\eq:bc}
\begin{equation}\label{eq:bc}
\sum_{k\geq1}\limits \lam(A_{k,c}(\alp))<\infty,\quad  
\text{ for all } c>0.
\end{equation}  
We exhibit such an $\alp$ explicitly by its continued fraction  
\eqref{eq:alpcf}  by setting                                              \mbc{eq:ak\\3.5}
\begin{equation}\label{eq:ak}
a_k= \max(N_k,3k^2), \quad k\geq1,
\end{equation}
 where  $N_k=\min S_k$\,  and\,
 $S_k=\{m\in\N\mid s_n/n\geq k^4, \text{ for all } n\geq m\}\subset\N$.
 Note that the sets  $S_k$  are not empty in view of the 
 assumption~\eqref{eq:sinf}.  (One can verify that, in fact, 
 the inequalities  $a_k\geq \max(N_k,3k^2)$, 
rather than \eqref{eq:ak},  would suffice for  our task).                                      \mbc{to delete?}

Set  $m_k=\intpart{q_{k+1}/k^{2}}$  
(where $q_k$  are the denominators of the convergents for  $\alpha$
determined by the expansion~\eqref{eq:alpcf}).  Then                                            \mbc{3.6\\ineq:mk}
\begin{equation}
q_{k+1}\geq m_k\geq 3q_k>q_k, \quad \text{for } k\geq1.
\end{equation}
(Indeed, $ q_{k+1}\geq\intpart{q_{k+1}/{k^2}}= m_k\geq 
   \intpart{a_{k+1}q_k/k^2}\geq 
   \intpart{a_{k+1}/k^2}\cdot q_k\geq 3q_k.
$)  

Denote\,  $B_n=B(n\alp,\tfrac c{s_n})$ and set                                                \mbc{3.7}
\[
S_1=\leb\big(\!\!\bigcup_{n=q_k}^{m_k}\limits\! B_n\big), \quad 
S_2=\leb\big(\!\!\bigcup_{n=m_k}^{q_{k+1}}\limits\! B_n\big), \quad
S_3=\leb\big(\!\!\bigcup_{n=q_k}^{2q_k}\limits\! B_n\big) \quad
\text{and}\quad
S_4= \leb\big(\!\!\!\bigcup_{\hsp1n=2q_k}^{m_k}\limits\!\!
B_n\!\!\setminus\! B_{n-q_k}\big).
\]
\noindent Then    
$\leb\big(A_{k,c}(\alp)\big)\leq S_1+S_2$\, and\, $S_1\leq S_3+S_4$.                           \mbc{3.8}
It follows that\, $\leb\big(A_{k,c}(\alp)\big)\leq S_2+S_3+S_4$.

Since  $m_k=\intpart{q_{k+1}/{k^2}}$, the inequalities $m_k>q_k>a_k\geq N_k$  imply
\[
S_2\leq  \frac{2c\,q_{k+1}}{s_{m_k}}< 
\frac{2c\,q_{k+1}}{m_k+1}\cdot\frac{2m_k}{s_{m_k}}
\leq 4ck^2\cdot\frac{m_k}{s_{m_k}}\leq \frac{4ck^2}{k^4}=4ck^{-2}.
\]

An estimate for  $S_3$  follows from the inequalities  $q_k>a_k\geq N_k$:                     \mbc{3.9}
\[
S_3\leq \frac{c\,q_k+1}{s_{q_k}}< (c+1) q_k/{s_{q_k}}\leq (c+1)/{k^4}.
\]

In order to estimate  $S_4$,  we first observe that  
$s_{n-q_k}\leq s_n$  implies
\begin{align*}
\lam(B_n\!\!\setminus\! B_{n-q_n})&=
\lam\big(B(n\alp, c/\!\rbm{.5}{$s_n$})\!\setminus 
B((n-q_k)\alp, c/\!\rbm{.5}{$s_{n-q_k}$}\big)\leq\\
&\leq \lam\big(B(n\alp,c/\!\rbm{.5}{$s_n$})\!\setminus 
B((n-q_k)\alp,c/\!\rbm{.5}{$s_n$}\big),
\end{align*}
whence\,                                                                                      \mbc{4.0}
$
\lam(B_n\!\!\setminus\! B_{n-q_n})\leq \fracpart{\alp q_k}<1/q_{k+1}
$
(see  \eqref{eq:balls} and  \eqref{ineq:qk}).  
Since  $m_k=\intpart{q_{k+1}/{k^2}}$,  we get
\[
S_4\leq \frac{m_k-2q_k+1}{q_{k+1}}<
\frac {m_k}{q_{k+1}}\leq \frac1{k^2}.
\]

It follows that\,   
$\leb\big(A_{k,c}(\alp)\big)\leq S_2+S_3+S_4<(4c+1)/{k^2}+(c+1)/{k^4}$,
so that  \eqref{eq:bc}  holds. 

The proof of Theorem \ref{thm:noptim}  is complete.  
                                                                                            \mbc{4.1- end\\
																			              proof of\\
                                                                                             thm:noptim}

\section{\bf \large Proof of Theorem \ref{thm:chaikaex}}\label{sec:chaikaex}

The following proposition shows that particular choices of $(x,y)$ need not satisfy Theorem \ref{thm:phi1mu}. 
It relies on a class of examples constructed in \cite{nonue} and its proof relies on 
results about a subclass of these examples in \cite{chaika1}. To prove Theorem \ref{thm:chaikaex} we establish the following proposition and Remark \ref{rem:renorm} finishes the argument.
\begin{prop} \label{prop:chaikaex}
There exists $T$, a minimal 4-IET with 2 ergodic measures, $\mu_{sing}, $ \leb \ \   
such that for $\mu_{sing} \times$ \leb-  a.e. pair $(x,y)$ 
$\underset{n \to \infty} {\liminf} \, n |T^n(x)-y|=\infty$.
\label{ex2}
\end{prop}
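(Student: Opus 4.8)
The plan is to build the IET $T$ by appealing to the non-uniquely-ergodic examples of Keane/Keynes-Newton type studied in \cite{nonue} and \cite{chaika1}. Such a construction yields a minimal $4$-IET with exactly two ergodic invariant probability measures, one of which is Lebesgue measure $\lam$ and the other a mutually singular measure $\mu_{sing}$ whose Hausdorff dimension is strictly less than $1$. The key structural input we need to extract from \cite{chaika1} is a quantitative control on how fast the measures $\mu_{sing}$ and $\lam$ ``separate'' along the orbit: concretely, that for $\mu_{sing}$-a.a. $x$ the points $T^n(x)$ spend almost all their time (in the sense of natural density, along the Rauzy-Veech renormalization times) inside regions that are $\lam$-small. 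I would state this as a lemma: there is a sequence of intervals (or finite unions of intervals) $W_n\subset\ui$, determined by the renormalization, with $T^n(x)\in W_n$ for $\mu_{sing}$-a.a.\ $x$ along a density-one set of $n$, and with $\lam(W_n)\,n\to 0$; this is essentially a restatement of the singularity of $\mu_{sing}$ with respect to $\lam$ together with the quantitative estimates already obtained in \cite{chaika1}.

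Granting this, the argument for $\liminf_{n\to\infty} n\,|T^n(x)-y|=\infty$ for $(\mu_{sing}\ttimes\lam)$-a.a.\ $(x,y)$ proceeds by a Borel-Cantelli argument in the $y$-variable, with $x$ fixed. Fix $c>0$ and fix a $\mu_{sing}$-generic $x$. The event that $n\,|T^n(x)-y|<c$ for some large $n$ means $y$ lies in the ball $B(T^n(x),c/n)$, which (for $n$ large, using the lemma) is contained in a $c/n$-neighbourhood of the $\lam$-small set $W_n$; since $\lam$ of this neighbourhood is at most $\lam(W_n)+2c/n \cdot(\text{number of components of }W_n)$, one must control the number of intervals making up $W_n$ as well — this is where the finite-rank/bounded-complexity structure of the renormalization enters, keeping that count polynomially (indeed linearly) bounded. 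Summing over a density-one set of $n$ and using that $n\lam(W_n)\to 0$ fast enough (the quantitative part), one gets $\sum_n \lam\big(\{y: n|T^n(x)-y|<c\}\big)<\infty$ along that density-one subsequence; the complementary (density-zero) set of $n$ is handled crudely since there the balls are still of radius $\le c/n$ and we only need the sum over a sparse set to converge, which follows from sparseness. Hence for $\lam$-a.a.\ $y$, only finitely many $n$ satisfy $n|T^n(x)-y|<c$, i.e.\ $\liminf_n n|T^n(x)-y|\ge c$; intersecting over $c\in\N$ gives the claim for this $x$, and then Fubini over $\mu_{sing}$-generic $x$ completes it.

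The main obstacle, and the step that requires the most care, is making the quantitative separation lemma precise: one needs not just that $\mu_{sing}\perp\lam$ (which only gives $\lam(W_n)\to 0$), but a \emph{rate} ensuring $n\,\lam(W_n)\to 0$, together with the polynomial bound on the number of components of $W_n$, \emph{and} the density-one control on the set of good times $n$. Extracting exactly this from the renormalization estimates of \cite{chaika1} — which were designed to prove non-unique ergodicity and related mixing-type statements rather than this Diophantine inequality — is the technical heart of the proof; everything else is a routine Borel-Cantelli computation. Once Proposition \ref{prop:chaikaex} is established, Theorem \ref{thm:chaikaex} follows by averaging over the renormalization dynamics (Remark \ref{rem:renorm}): the value $b=(\lam\ttimes\lam)(\phi_1^{-1}(0))$ is forced into $[\tfrac14,\tfrac34]$ because $\lam=\tfrac12(\mu_{sing}'+\lam)$-type decompositions of the relevant product measures, combined with Proposition \ref{prop:chaikaex} giving one ``corner'' of $\lam\ttimes\lam$ full $\phi_1=\infty$ measure and Theorem \ref{thm:phi1mu} giving the ``diagonal'' corners $\phi_1=0$, pin $b$ between these extremes.
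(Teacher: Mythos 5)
Your high-level architecture matches the paper's: take the non-uniquely ergodic $4$-IETs of \cite{nonue} (with the quantitative tower estimates of \cite{chaika1}), fix a $\mu_{sing}$-generic $x$, run Borel--Cantelli in the $y$-variable, and finish with Fubini. But the Borel--Cantelli step as you have written it does not work. The set $\{y: n|T^n(x)-y|<c\}$ is a single interval of Lebesgue measure at least $c/n$ (for large $n$), so $\sum_n\lam\big(\{y: n|T^n(x)-y|<c\}\big)$ diverges over \emph{any} density-one set of $n$; no hypothesis of the form $n\,\lam(W_n)\to0$ can rescue a term-by-term summation. What is actually needed --- and what the paper does via the sets $A_{x,r,b_{k,2},b_{k+1,2}}$ and Lemma \ref{conv} --- is to bound the measure of the \emph{union} of these balls over an entire renormalization window $b_{k,2}<n\leq b_{k+1,2}$ at once, exploiting that for a $(k+1)$-good $x$ the balls pile up on the tower $O(I_2^{(k-1)})$, which has only $b_{k-1,2}$ connected components and measure at most $2/(b_{k-1,2})^2$; one then sums over the windows $k$, not over the times $n$. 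You gesture at exactly this containment, but then sum per $n$, which is fatal.

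The second gap is your treatment of the exceptional times. A density-zero set $S\subset\N$ can have $\sum_{n\in S}1/n=\infty$ (e.g.\ $S=\bigcup_k[2^k,2^k(1+\tfrac1k)]$), so ``handled crudely\ldots which follows from sparseness'' is false as stated. The paper builds the required control directly into the definition of $k$-goodness: condition (2) forces every time $n$ in the window at which $T^n(x)$ leaves $O(I_2^{(k)})$ to satisfy $n>(n_{k+1}b_{k,3})^2$, so each such ball has radius less than $r/(n_{k+1}b_{k,3})^2$, and the roughly $n_{k+1}b_{k,3}+b_{k,4}$ such times contribute in total $O(1/(n_{k+1}b_{k,3}))$, summable in $k$ by the imposed growth conditions on $m_k,n_k$. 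A further per-window split is still needed for the times inside $O(I_2^{(k)})$ but outside $O(I_2^{(k-1)})$, which the paper handles with the harmonic-sum estimate in Lemma \ref{conv}. Your ``quantitative separation lemma'' would have to encode all of this --- the window structure, the component counts, and lower bounds on the exceptional times --- at which point it is essentially Lemmas \ref{good} and \ref{conv} of the paper; as stated it is both insufficient for the convergence you claim and, in the density-zero clause, incorrect.
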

Compare it with Chebyshev's Theorem (\cite[Theorem 24]{Khinchin}):
\begin{thm} For an arbitrary irrational number $\alpha$ and real number $\beta$ the 
inequality $|n\alpha-m-\beta|<\frac 3 n$ has an infinite number of integer solutions $(n,m)$.
\end{thm}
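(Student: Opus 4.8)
The plan is to restate the claim in terms of the distance to the nearest integer and then to construct, for each convergent of $\alpha$, one good approximant. Writing $\fracpart{\cdot}$ for the distance to the nearest integer, it suffices to produce infinitely many distinct $n\in\N$ with $n\fracpart{n\alpha-\beta}<2$: for such an $n$, taking $m$ to be the integer nearest to $n\alpha-\beta$ gives $|n\alpha-m-\beta|=\fracpart{n\alpha-\beta}<2/n\le 3/n$, and distinct values of $n$ yield distinct solution pairs $(n,m)$. I will in fact produce $n$ with $n\fracpart{n\alpha-\beta}<1$.

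First I would work with the convergents $p_k/q_k$ of $\alpha$ and set $\eta_k=\fracpart{q_k\alpha}=|q_k\alpha-p_k|$, recalling from \eqref{ineq:qk} that $\eta_{k-1}=\fracpart{q_{k-1}\alpha}<1/q_k$. The geometric input is the three-distance (Steinhaus) theorem applied to the $q_k$ points $\{j\alpha\}\in\R/\Z$, $0\le j<q_k$: they partition the circle into arcs whose largest length is at most $\eta_{k-1}+\eta_k$, which by the above is $<2/q_k$. Hence $\beta$ lies in one such arc and is within half its length, so within distance $<1/q_k$, of one of its endpoints $\{j\alpha\}$ with $0\le j<q_k$. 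Setting $n_k=j$ for this index gives $n_k\fracpart{n_k\alpha-\beta}<q_k\cdot(1/q_k)=1$, one good approximant at each scale $q_k$.

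The remaining point is to guarantee infinitely many \emph{distinct} values of $n$. If the indices $n_k$ are unbounded, some subsequence tends to infinity and we are done. Otherwise a single value $j_0$ equals $n_k$ for infinitely many $k$, and along this sequence $\fracpart{j_0\alpha-\beta}<1/q_k\to0$, which forces $\beta\equiv j_0\alpha\pmod 1$. In this degenerate case I switch to homogeneous approximation: taking $n=j_0+q_k$ gives $n\alpha-\beta\equiv q_k\alpha\pmod 1$, so $n\fracpart{n\alpha-\beta}=(j_0+q_k)\eta_k<(j_0+q_k)/q_{k+1}\le 1+j_0/q_k\to1$, which is $<3$ for all large $k$ and supplies infinitely many solutions with $n\to\infty$.

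The main obstacle — and essentially the only place care is needed — is ensuring the infinitude of distinct solutions rather than merely a good approximant at each scale; the dichotomy above (either the indices $n_k$ are unbounded, or $\beta$ lies on the orbit of $0$ and is handled by homogeneous approximation) resolves it cleanly. Everything else reduces to the three-gap theorem together with the standard convergent estimate \eqref{ineq:qk}. I note that this argument actually yields the stronger bound $\liminf_{n\to\infty}\limits n\fracpart{n\alpha-\beta}\le1$, so the constant $3$ in the statement is far from optimal; the value $3$ is retained only to match the cited classical formulation.
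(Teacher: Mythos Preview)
The paper does not give its own proof of this statement: it is quoted as Chebyshev's Theorem with a bare citation \cite[Theorem 24]{Khinchin}, solely to contrast with Proposition~\ref{prop:chaikaex}. So there is no in-paper argument to compare against.

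Your argument is correct and is essentially the classical one. The three-gap input for the $q_k$ points $\{j\alpha\}_{0\le j<q_k}$ does give a maximal gap $\eta_{k-1}+\eta_k<2/q_k$, so every $\beta$ lies within $1/q_k$ of some $\{j\alpha\}$, yielding $j\fracpart{j\alpha-\beta}<1$ with $0\le j<q_k$; and your dichotomy (the indices $n_k$ are unbounded, or else $\beta\equiv j_0\alpha\pmod1$ and one falls back on homogeneous approximation via $n=j_0+q_k$) correctly handles the infinitude of distinct $n$, including the edge case $j=0$. One small cosmetic point: your opening sentence promises $n\fracpart{n\alpha-\beta}<1$ ``in fact'', but in the degenerate branch you only obtain $(j_0+q_k)\eta_k<1+j_0/q_{k+1}$, which is eventually $<3$ but need not be $<1$ for each $k$; this does not affect the theorem, only the side remark. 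Your closing observation that the argument actually yields $\liminf_{n\to\infty}\limits n\fracpart{n\alpha-\beta}\le1$ is valid in both branches (in the degenerate case because $\liminf_m m\fracpart{m\alpha}\le 1/\sqrt5$ and the additive $j_0\eta_k$ term vanishes).
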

In the language of this paper Chebyshev's Theorem states that if $T_{\alpha}$ is an 
irrational rotation, 
$\underset{n \to \infty} {\liminf} \, n |T_{\alpha}^n(x)-y|\leq 3$ for any $y$.

 The terminology that will be used is in \cite{chaika1}. Recalling a few key facts from 
 \cite{nonue} observe that by considering $I^{(k+1)}$ as the first induced map of $I^{(k)}$, 
 one can see that the images of $I_2^{(k+1)}$  in $O(I_2^{(k+1)})$ land $b_{k,4}$ times in 
 $O(I_4^{(k)})$, then $m_{k+1}b_{k,2}$ times in $O(I_2^{k})$, then $n_{k+1}b_{k,3}$ times 
 in $O(I_3^{(k)})$ before returning to $I^{(k+1)}$. (Recall that $O(I_j^{(k)})$ is the 
 Kakutani-Rokhlin tower over the $j^{\text{th}}$ interval of the $k^{\text{th}}$ induced 
 map. $b_{k,j}$ represents the number of images of $I_j ^{(k)}$ in this tower.) \\
We place the following conditions on $m_k$ and $n_k$.
\begin{enumerate}
\item $(n_k)^3<m_k $
\item $(b_{k-1,2})^2<m_k<(b_{k-1,2})^5$
\item $(b_{k,2})^2 2^{2k} m_k<n_{k+1}$.
\end{enumerate}
These conditions provide the following immediate consequences:
\begin{enumerate}
\item $b_{k,2}\geq b_{k,j}$ for any $j$ (Lemma 3 of \cite{chaika1}).
\item $(b_{k-1,2})^3<b_{k+1,2}<4(b_{k-1,2})^6$ (direct computation with condition 2).
\item \leb $(O(I_2^{k}))<\frac 2 {(b_{k,2})^2}$ (by Lemma 1 of \cite{chaika1} which is 
in the proof of Lemma 3 of \cite{nonue}).
\item $\underset{j=1}{\overset{\infty}{\sum}} \frac {n_{k+1}b_{k,3}}{b_{k+1,2}}$ 
converges ($b_{k+1,2}>m_{k+1}b_{k,2}$ and $b_{k,2}>b_{k,3}$).
\item $\underset{j=1}{\overset{\infty}{\sum}} \frac{n_k}{m_k}$ converges (condition 1).
\item $\mu_{sing}$ a.e. point is in $\underset{j=1}{\overset{\infty}{\cup}}\underset{k=j}
  {\overset{\infty}{\cap}} O(I_2^{(k)})$ (condition 1 and Lemma 6 of \cite{chaika1})
\end{enumerate}

\begin{defi} Let $A_{x,r,M,N}=\{y: |T^n(x)-y|< \frac r n \text{ for some } N <n \leq M \}$
\end{defi}
This example relies on showing that \leb ($A_{x,r,M,N}$) is small 
(see Lemma \ref{conv}) for a $\mu_{sing}$ large set of $x$.
The following definition provides us with a class of $x$ such that we can control 
\leb ($A_{x,r,M,N}$) as seen by Lemma \ref{conv}. 
This class is also $\mu_{sing}$ large as seen by Lemma \ref{good}.
\begin{defi} $x$ is called $\emph{k-good}$ if:
\begin{enumerate} 
\item $T^n(x) \in O(I_2^{(k)})$ for all $0\leq n \leq b_{k,2}$.
\item $T^n(x) \in O(I_2^{(k-1)})$ for all $0 \leq n \leq (n_{k} b_{k-1,3})^2$
\end{enumerate}
\end{defi}

Consequence (6) shows that $\mu_{sing} $ almost every point satisfies condition (1) for k-good for all $k>N$. The following lemma shows that condition (2) is also satisfied eventually.
\begin{lem} For $\mu_{sing}$ a.e. $x$ there exists $N$ such that $x$ is $k$-good for all $k>N$.
\label{good}
\end{lem}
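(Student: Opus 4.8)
The plan is a Borel-Cantelli argument built on the nested tower structure and the growth conditions (1)--(3). First I would fix the full-$\mu_{sing}$-measure set $E=\bigcup_{j\ge1}\bigcap_{k\ge j}O(I_2^{(k)})$ furnished by consequence (6), and for $x\in E$ choose $N_0(x)$ with $x\in O(I_2^{(k)})$ for every $k\ge N_0(x)$. Recall that the tower $O(I_2^{(k+1)})$ consists of $b_{k+1,2}$ levels $T^j(I_2^{(k+1)})$ which, read from bottom to top, form three blocks: $b_{k,4}$ levels inside $O(I_4^{(k)})$, then $m_{k+1}b_{k,2}$ levels inside $O(I_2^{(k)})$, then $n_{k+1}b_{k,3}$ levels inside $O(I_3^{(k)})$. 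Since the towers over distinct intervals of the $k$-th induced map are pairwise disjoint and partition the space, every $x\in O(I_2^{(k)})\cap O(I_2^{(k+1)})$ lies in the middle block, and the forward $T$-orbit of such an $x$ stays in $O(I_2^{(k)})$ for exactly as many steps as it remains in that block; the same remark applied to $O(I_2^{(k)})$ shows that every $x\in O(I_2^{(k-1)})\cap O(I_2^{(k)})$ lies in the $m_kb_{k-1,2}$-level $O(I_2^{(k-1)})$-block of $O(I_2^{(k)})$.

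Next I would introduce the bad sets: let $\mathrm{Bad}_k^{(1)}$ be the union of the top $b_{k,2}$ levels of the $O(I_2^{(k)})$-block of $O(I_2^{(k+1)})$, and $\mathrm{Bad}_k^{(2)}$ the union of the top $(n_kb_{k-1,3})^2$ levels of the $O(I_2^{(k-1)})$-block of $O(I_2^{(k)})$. By the block description, if $x\in E$, $k>N_0(x)+1$, and $x\notin\mathrm{Bad}_k^{(1)}\cup\mathrm{Bad}_k^{(2)}$, then $x\in O(I_2^{(k-1)})\cap O(I_2^{(k)})\cap O(I_2^{(k+1)})$ and $x$ sits in the middle block of $O(I_2^{(k+1)})$ at least $b_{k,2}$ levels below its top, so $T^nx\in O(I_2^{(k)})$ for $0\le n\le b_{k,2}$; likewise $x$ sits in the $O(I_2^{(k-1)})$-block of $O(I_2^{(k)})$ at least $(n_kb_{k-1,3})^2$ levels below its top, so $T^nx\in O(I_2^{(k-1)})$ for $0\le n\le (n_kb_{k-1,3})^2$. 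Thus $x$ is $k$-good, and it suffices, by Borel-Cantelli, to show $\sum_k\big(\mu_{sing}(\mathrm{Bad}_k^{(1)})+\mu_{sing}(\mathrm{Bad}_k^{(2)})\big)<\infty$.

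For the estimates I would use that $\mu_{sing}$ is $T$-invariant, so all $b_{k+1,2}$ levels of $O(I_2^{(k+1)})$ (being $T$-translates of $I_2^{(k+1)}$) have equal $\mu_{sing}$-mass, and the $m_{k+1}b_{k,2}$ levels forming the middle block have total mass at most $1$; hence $\mu_{sing}(\mathrm{Bad}_k^{(1)})\le b_{k,2}\big/(m_{k+1}b_{k,2})=m_{k+1}^{-1}<(b_{k,2})^{-2}$ by condition (2). Similarly $\mu_{sing}(\mathrm{Bad}_k^{(2)})\le (n_kb_{k-1,3})^2\big/(m_kb_{k-1,2})\le n_k^2b_{k-1,2}/m_k<b_{k-1,2}/n_k<1/b_{k-1,2}$, using $b_{k-1,3}\le b_{k-1,2}$ (consequence (1)), $m_k>n_k^3$ (condition (1)), and $n_k>(b_{k-1,2})^2$ (condition (3)). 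Since the $b_{k,2}$ grow super-exponentially by consequence (2) (indeed $(b_{k-1,2})^3<b_{k+1,2}$), both series converge, and Borel-Cantelli completes the proof.

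The step I expect to be the main obstacle is the bookkeeping in the first two paragraphs: one must locate precisely the range of levels occupied by each $O(I_2^{(k)})$-block inside the successive towers and verify that a point lying outside $\mathrm{Bad}_k^{(1)}\cup\mathrm{Bad}_k^{(2)}$ but inside $O(I_2^{(k-1)})\cap O(I_2^{(k)})\cap O(I_2^{(k+1)})$ is positioned far enough below each block-top that its forward orbit cannot escape for the required number of steps. Once the two bad sets are correctly described as unions of tower levels, the measure bounds follow immediately from the $T$-invariance of $\mu_{sing}$ and conditions (1)--(3).
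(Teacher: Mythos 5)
Your proof is correct and follows essentially the same route as the paper's: both identify the tower levels from which $k$-goodness can fail, bound their total $\mu_{sing}$-measure by the proportion of bad levels using conditions (1)--(3) and the equal mass of levels, and conclude by Borel--Cantelli together with consequence (6). Your version only makes the bookkeeping more explicit by splitting the bad levels into the two sets $\mathrm{Bad}_k^{(1)}$ and $\mathrm{Bad}_k^{(2)}$ at consecutive tower scales, where the paper lumps them into a single count of ``not good'' images inside $O(I_2^{(k+1)})$.
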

\begin{proof} The basic reason $\mu_{sing}$ a.e. $x$ is eventually $k$-good for all large enough $k$ is that the images of $O(I_2^{(k+1)})$ not in  $O(I_2^{(k)})$ are consecutive (by the construction in \cite{nonue}; see the discussion immediately following the statement of Theorem \ref{ex2} in this paper).
 This means we need to avoid $n_{k+1}b_{k,3}+b_{k,4}+(n_k b_{k,3})^2$ image of $O(I_2^{(k+1)})$.
 Because
$$\frac {n_{k+1}b_{k,3}+b_{k,4}+(n_{k+1} b_{k,3})^2}{b_{k+1,2}}<\frac{(n_{k+1}+1)b_{k,2}}{m_{k+1}b_{k,2}}$$ 
 is a convergent sum, the Borel Cantelli Theorem implies $\mu_{sing}$ almost every $x$ is $k$-good for all big enough $k$. (The left hand side is the proportion of the images of $I_2^{(k+1)}$ which are not good.)
 \end{proof}
The next lemma shows that if $x$ is $k+1$ good then $ A_{x,r,b_{k,2},b_{k+1,2}}$ is small in terms of Lebesgue measure.

\begin{lem} If $x$ is $k+1$-good for all $k>N$ then \leb $ (A_{x,r,b_{k,2},b_{k+1,2}})$ forms a convergent sum.
\label{conv}
\end{lem}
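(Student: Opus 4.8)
The plan is to read the subscripts of $A_{x,r,\cdot,\cdot}$ so that the relevant window of times is $b_{k,2}<n\le b_{k+1,2}$ (as written the set is empty, since $b_{k+1,2}>b_{k,2}$), to write $A_k$ for this set and $B_n=B(T^n(x),r/n)$, so that $A_k=\bigcup_{b_{k,2}<n\le b_{k+1,2}}B_n$, and to prove $\sum_{k}\leb(A_k)<\infty$. First I would use that $x$ is $(k+1)$-good: then $T^n(x)\in O(I_2^{(k+1)})$ for $0\le n\le b_{k+1,2}$, and — by the description of the construction of \cite{nonue} recalled just after the statement of Theorem \ref{ex2} — the successive levels of the $(k+1)$st tower sit in $O(I_4^{(k)})$ for $0\le n<b_{k,4}$, in $O(I_2^{(k)})$ for $b_{k,4}\le n<b_{k,4}+m_{k+1}b_{k,2}$, and in $O(I_3^{(k)})$ for $b_{k,4}+m_{k+1}b_{k,2}\le n<b_{k+1,2}$. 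Since $b_{k,4}\le b_{k,2}$ by consequence (1), the whole window lies in the last two of these blocks, so I would write $A_k=A_k'\cup A_k''$, with $A_k'$ coming from the times with $T^n(x)\in O(I_2^{(k)})$ and $A_k''$ from the times with $T^n(x)\in O(I_3^{(k)})$.

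The tail $A_k''$ I would estimate crudely: every $n$ occurring there has $n\ge b_{k,4}+m_{k+1}b_{k,2}\ge m_{k+1}b_{k,2}$, so $\leb(B_n)\le 2r/(m_{k+1}b_{k,2})$, and there are at most $n_{k+1}b_{k,3}$ such $n$; hence, using $b_{k,3}\le b_{k,2}$ (consequence (1)), then $n_{k+1}^{3}<m_{k+1}$ (condition (1)), then $n_{k+1}>2^{2k}$ (condition (3)),
\[
\leb(A_k'')\ \le\ n_{k+1}b_{k,3}\cdot\frac{2r}{m_{k+1}b_{k,2}}\ \le\ \frac{2r\,n_{k+1}}{m_{k+1}}\ <\ \frac{2r}{n_{k+1}^{2}}\ <\ 2r\cdot 2^{-4k},
\]
so $\sum_k\leb(A_k'')<\infty$. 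For the main part $A_k'$, all the centres $T^n(x)$ lie in $O(I_2^{(k)})$ and all radii are $\le r/b_{k,2}$, so $A_k'$ is contained in the $(r/b_{k,2})$-neighbourhood of $O(I_2^{(k)})$; using that $O(I_2^{(k)})$ is a union of at most $C$ intervals of $\ui$ for an absolute constant $C$ (a feature of the construction of \cite{nonue}) together with $\leb(O(I_2^{(k)}))<2/(b_{k,2})^{2}$ (consequence (3)), this gives $\leb(A_k')<2/(b_{k,2})^{2}+2Cr/b_{k,2}$. Since $b_{k,2}>(b_{k-2,2})^{3}$ by consequence (2), the sequence $(b_{k,2})$ grows at least double-exponentially, so $\sum_k b_{k,2}^{-1}<\infty$, and adding the two bounds gives $\sum_k\leb(A_k)<\infty$, as required.

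The hard part is the structural input used for $A_k'$: that the height-$b_{k,2}$ Kakutani--Rokhlin tower $O(I_2^{(k)})$ occupies only a bounded number of intervals of $\ui$, equivalently that fattening it by its maximal ball radius $r/b_{k,2}$ inflates its measure by only $O(r/b_{k,2})$ and not by $\Theta(r)$. For a generic Rokhlin tower over a single interval this fails — the tower can break into $\asymp b_{k,2}$ scattered pieces, the neighbourhood bound degrades to $\leb(O(I_2^{(k)}))+\Theta(r)$, and $\sum_k\leb(A_k')$ then diverges — so the argument leans essentially on the bounded combinatorial complexity of the towers built in \cite{nonue} and exploited in \cite{chaika1}; the rest of the proof is just bookkeeping with consequences (1)--(3) and conditions (1), (3).
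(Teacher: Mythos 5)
Your reading of the subscripts agrees with the paper's intent, and your treatment of the tail (the times spent in $O(I_3^{(k)})$) is salvageable, but the estimate you give for the main part $A_k'$ rests on a structural claim that is false --- and it is precisely the claim you flag as the crux. The tower $O(I_2^{(k)})=\bigcup_{j=0}^{b_{k,2}-1}T^j(I_2^{(k)})$ is \emph{not} a union of boundedly many intervals: the paper itself uses, one scale down, that there are $b_{k-1,2}$ connected components of $O(I_2^{(k-1)})$, so $O(I_2^{(k)})$ has on the order of $b_{k,2}$ components. This is unavoidable for any minimal IET: each base point of the tower carries an orbit segment of length $b_{k,2}$ inside $O(I_2^{(k)})$, and such segments are $\epsilon_k$-dense with $\epsilon_k\to0$, so a set of measure $<2/(b_{k,2})^2$ containing one must break into an unbounded number of pieces. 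Consequently the $(r/b_{k,2})$-neighbourhood of $O(I_2^{(k)})$ has measure about $b_{k,2}\cdot 2r/b_{k,2}=\Theta(r)$, your bound $2Cr/b_{k,2}$ with $C$ absolute fails, and $\sum_k\leb(A_k')$ diverges --- exactly the ``generic failure mode'' you describe, which the construction of \cite{nonue} does not and cannot evade.

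What the paper does instead is a two-scale argument. The times $n$ in the window with $T^n(x)\in O(I_2^{(k)})$ are split according to whether $T^n(x)$ lies in the sub-tower $O(I_2^{(k-1)})$ or not. For the former, the neighbourhood argument is run on $O(I_2^{(k-1)})$, which has only $b_{k-1,2}$ components, giving $\leb(O(I_2^{(k-1)}))+\frac{2r}{b_{k,2}}b_{k-1,2}$; this is summable because $b_{k-1,2}/b_{k,2}\le 1/m_k$ is tiny. For the latter, there are only $n_kb_{k-1,3}+b_{k-1,4}$ such hits per pass through $O(I_2^{(k)})$ (out of $b_{k,2}$ steps), and summing the ball measures over the $b_{k+1,2}/b_{k,2}$ passes yields a harmonic factor $\ln(b_{k+1,2})\le 7\ln(b_{k,2})$ against the small ratio $(n_kb_{k-1,3}+b_{k-1,4})/b_{k,2}$. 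Separately: your bound for $A_k''$ implicitly places $x$ at the base of the $(k+1)$-tower, whereas condition (2) of $(k+1)$-goodness forces $x$ into the middle ($O(I_2^{(k)})$) block, so the orbit can reach the $O(I_3^{(k)})$ block at times well before $m_{k+1}b_{k,2}$; the correct input is that condition (2) pushes all times with $T^n(x)\notin O(I_2^{(k)})$ past $(n_{k+1}b_{k,3})^2$, which is the estimate the paper uses (and which also covers the $O(I_4^{(k)})$ times your decomposition omits).
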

\begin{proof} This proof will be carried out by estimating the measure $A_{x,r,b_{k,2},b_{k+1,2}}$ gains when $x$ lands in $O(I_2^{(k)})$ and when it doesn't.
Since $x$ is $k+1$-good, the Lebesgue measure $A_{x,r,b_{k,2},b_{k+1,2}}$ gains by not landing in $O(I_2^{(k)})$ is less than
$$\frac {2} {(n_{k+1} b_{k,3})^2} (n_{k+1}b_{k,3}+b_{k,4})  \leq \frac {2(n_{k+1}b_{k,3}+4b_{k-1,2})}{(n_{k+1} b_{k,3})^2} \leq \frac {4} {n_{k+1}b_{k,3}}.$$
 When $x$ lands in $O(I_2^{(k)})$ it either lands in one of the $b_{k-1,2}$ components of which are images of $I_2 ^{(k-1)}$  (this is $O(I_2^{(k-1)})$) or it doesn't.
 On each pass through of the orbit, it lands $m_kb_{k-1,2}$ in one of the first $b_{k-1,2} $ images of $I_2 ^{(k-1)}$, and $n_kb_{k-1,3}+b_{k-1,4}$ times it doesn't.
 We will estimate $ A_{x,r,b_{k,2},b_{k+1,2}}$ by dividing up the orbit into these pieces.\\
When $x$ lands in $O(I_2^{(k-1)})$ the measure of the points its landings place in $A_{x,r,b_{k,2},b_{k+1,2}}$ is at most \leb ($O(I_2^{(k-1)})$) $+\frac {2r} {b_{k,2}} b_{k-1,2}$. (There are $b_{k-1,2} $ connected components of $O(I_2^{(k-1)})$.)\\
Otherwise we approximate the measure by :
$$\frac 2 {b_{k,2}} \underset {i=1}{\overset{b_{k+1,2}/b_{k,2}}{\sum}} \frac{n_kb_{k-1,3}+b_{k-1,4}}{i} \leq \frac {n_kb_{k-1,3}+b_{k-1,4}}{b_{k,2}}\ln(b_{k+1,2})\leq \frac {n_kb_{k-1,3}+b_{k-1,4}}{b_{k,2}}7\ln(b_{k,2}).$$ 
The left hand side is given by estimating the measure gained by hits in $O(I_2^{(k)}) \backslash O(I_2^{(k-1)}$ ($n_kb_{k-1,3}+b_{k-1,4}$ hits each of which contributes at most $\frac 2 {ib_{k,2}}$ on the $i^{\text{th}}$ pass and summing over each pass through $O(I_2^{(k)})$). The first inequality is given by the fact that $b_{k+1,2} > \frac {b_{k+1,2}}{b_{k,2}}$. The final inequality is given 
by consequence 2.
 \\ Collecting all of the measure, if $x$ is $k+1$-good then
$$ \leb (A_{x,r,b_{k,2},b_{k+1,2}}) \leq \frac {4}{n_{k+1}b_{k,3}}+\frac 2 {(b_{k,2})^2}+\frac {2r} {b_{k,2}} b_{k-1,2}+\frac {n_kb_{k-1,3}+b_{k-1,4}}{b_{k,2}}7\ln(b_{k,2})$$

 which forms a convergent series due to the at least exponential growth of $b_{k,i}$.
\end{proof}

 \vspace{2mm}
 
 \begin{proof}[Proof of Proposition \ref{ex2}]  $\mu_{sing}$ a.e. $x$ is eventually $k+1$-good. By Borel-Cantelli for each of these $x$, Lebesgue a.e. $y$ has $\underset{n \to \infty}{\lim} \, n|T^n(x)-y|=\infty$. The set of all $(x,y)$ such that $\underset{n \to \infty}{\lim} \, n|T^n(x)-y|=\infty$ is measurable, and so has $\mu_{sing} \times \leb $ measure  1  (by Fubini's Theorem).
 \end{proof}
\begin{rem} One can modify conditions 1-3 to achieve $\liminf_{n\to\infty}\limits\,n^{\alpha}|T^nx-y|= \infty$ for $0<\alpha<1$.
\end{rem}
\begin{rem}\label{rem:renorm} Following \cite[Section 1]{ietv}, one can renormalize the IET by choosing the IET $S_p$, with length vector $$(p\leb(I_1)+(1-p)\mu_{sing}(I_1),p\leb(I_2)+(1-p)\mu_{sing}(I_2),p\leb(I_3)+(1-p)\mu_{sing}(I_3),p\leb(I_4)+(1-p)\mu_{sing}(I_4))$$
 and permutation $4213$. $S_p$ has the same symbolic dynamics and obeys the same \cite{nonue} type induction procedure as $T$ (with the same matrices). As a result $S$ has two ergodic measures $\mu_{S_p}$, $\lambda_{S_p}$ such that $\mu_{S_p}(I_j^{(k)})$ for $S_p$ is the same as $\mu_{sing}(I_j^{(k)})$ for $T$ and $\lambda_{S_p}(I_j^{(k)})$ for $S_p$ is the same as $\leb(I_j^{(k)})$. Moreover, if $0<p<1$ then $\mu_{S_p}$ and $\lambda_S$ are both absolutely continuous and supported on disjoint sets of Lebesgue measure $1-p$ and $p$ respectively. If $p=1$ the IET is $T$, if $p=0$ then $\mu_{S_0}$ is Lebesgue measure and $\lambda_{S_0}$ is singular. As a result the Lebesgue measure of $I_j^{(k)}$ for $S_{.5}$ is at least $.5 \max\{\leb (I_j^{(k)}) ,\mu_{sing}(I_j^{(k)})\}$ for $T$.  From this it follows that $\underset{n \to \infty} {\liminf} \, n |S_{.5}^n(x)-y|=\infty$ on a set of $(x,y)$ with measure .25 (corresponding to $x $ being chosen from a set of $\mu_S$ full measure  
 and $y$ being chosen from a set of $\lambda_S$). This proves Theorem \ref{thm:chaikaex}. (See \cite[Section 1]{ietv} for more on renormalizing).
\end{rem}

\section{\bf \large The $\tau$-entropy of an IET} \label{sec:tau}
Note that Lebesgue almost all  $3$-IET's (corresponding to  
$\pi=(3\ 2\ 1)\in S_3$)  are weakly mixing. This result  has been first 
proved by Katok and Stepin  \cite{KaS},  with an explicit sufficient  
diophantine  generic condition on the lengths $\{\ell_1,\ell_2,\ell_3\}$  
(of exchanged intervals) given.
A weaker sufficient generic condition (for weak mixing of a  $3$-IET)  
has been given in  \cite{BoN}.  

Next we introduce the notion of a  $\tau$-entropy, $\tau(T)$,  of an  IET  $T$                \mbc{9.14}
   which provides an upper bound  on 
the proximality constant   $C_\psi(T)$   (Theorem \ref{thm:psialp} below).

We consider the following finite subsets of  $\ui$  associated with 
a given  IET  $(\ui,T)$:                                                                      \mbc{9.15}    
\begin{subequations}\label{eqs:deltas}
\begin{align}
\Delta(T)&\bydef\{T(x)\ominus x  \mid x\in \ui\}\subset \ui;  \\ 
\Delta'(T)&\bydef\{x\ominus y\mid x,y\in\Delta(T)\}\subset \ui;  \\  
\Delta'_n(T)&\bydef \bigcup_{k=1}^n \Delta'(T^k), \quad \text{for } n\geq1;
\end{align}
\end{subequations}
where  $\oplus, \ominus$  stand for the binary operations on  $\ui$  defined as
addition and subtraction  modulo $1$.  
Clearly,  for an $r$-IET  $T$,   the following inequalities hold:  
$\card(\Delta(T))\leq r$,   $\card(\Delta(T^k))\leq rk$  (because  $T^k$
is an  IET  exchanging at most  $(r-1)k+1\leq rk$  subintervals),  
and hence  $\card(\Delta'_n(T))< r^2n^3$. 
\begin{def}\label{def:tauent}
The $\tau$-entropy of an  IET\, $T$  is defined by the formula
\begin{equation}\label{eq:tau}
\tau(T)=\limsup_{n\to\infty}\, \frac{\log(\card(\Delta'_n(T))}{\log n}.
\end{equation}
\end{def}
It is clear that the inequalities  $0\leq \tau(T)\leq 3$  hold  for any  IET\,  $T$.  
\begin{definition}
An  IET\,  $T$  is called  $\tau$-deterministic if   $\tau(T)=0$.
\end{definition}
It is clear that all  $2$-IETs (circle rotations)  are  $\tau$-deterministic
(because then  $\Delta'_n(T)=\{0\}$). It turns out that Lebesgue a.\,a.  
$3$-IETs  also are $\tau$-deterministic (see Theorem \ref{thm:om2tau}).
\begin{thm}\label{thm:psialp}  
If $T$ is a $\tau$-deterministic IET then its proximality constant $C_\psi(T)$ vanishes.    \mbc{9.2\\thm:psialp}
More generally, for an arbitrary  IET\, $T$,   $C_\psi(T)\leq \min(1,\tau(T))$.
\end{thm}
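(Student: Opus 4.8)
The plan is to reduce everything to the single claim: for every $\alpha>\tau(T)$ one has $\psi^*_\alpha(\lam,\lam)=\delta_\infty$ (here $C_\psi(T)=C_\psi(\ui,T,\lam)$). Granting this, the set $\{\alpha>0\mid\psi^*_\alpha(\lam,\lam)=\delta_0\}$ is contained in $(0,\tau(T)]$, so $C_\psi(T)\le\tau(T)$, which is already $0$ when $T$ is $\tau$-deterministic; combining with the bound $C_\psi(T)\le 1$ (immediate from Proposition \ref{proxiet}, or from the crude estimate in the last paragraph) gives $C_\psi(T)\le\min(1,\tau(T))$.

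First I would record the structural fact underlying the definitions \eqref{eqs:deltas}: since $T^n$ is again an IET, it is a piecewise translation, so there is a partition $\ui=\bigsqcup_i K_i$ with $T^n|_{K_i}(x)=x+c_i\pmod 1$, and $\{c_i\bmod 1\}=\Delta(T^n)$. Hence for $x\in K_i$ and $y\in K_j$,
\[
d(T^nx,T^ny)=\fracpart{(x-y)+(c_i-c_j)}=\fracpart{(x-y)+\delta},\qquad \delta=(c_i\bmod 1)\ominus(c_j\bmod 1)\in\Delta'(T^n)\subseteq\Delta'_n(T).
\]
Consequently, for $C>0$ the ``small-distance'' set $B_n(C)=\{(x,y)\in\ui^2\mid n^\alpha d(T^nx,T^ny)<C\}$ is contained in $\bigcup_{\delta\in\Delta'_n(T)}\{(x,y)\mid\fracpart{(x-y)+\delta}<C/n^\alpha\}$. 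Each set in this union has $\lam\ttimes\lam$-measure at most $2C/n^\alpha$ (for fixed $x$ the admissible $y$ fill a circle ball of radius $C/n^\alpha$), so $(\lam\ttimes\lam)(B_n(C))\le 2C\,\card(\Delta'_n(T))/n^\alpha$.

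Summing this over all $n$ would only yield convergence for $\alpha>\tau(T)+1$, which is too weak; the key step is to group the indices dyadically. Put $V_j(C)=\bigcup_{2^{j-1}<n\le 2^j}B_n(C)$. For every $n$ in that block one has $\Delta'(T^n)\subseteq\Delta'_{2^j}(T)$ and $C/n^\alpha\le C\,2^{-(j-1)\alpha}$, so the union defining $V_j(C)$ runs over one and the same finite set $\Delta'_{2^j}(T)$ with one and the same radius; therefore $(\lam\ttimes\lam)(V_j(C))\le 2C\,2^{-(j-1)\alpha}\,\card(\Delta'_{2^j}(T))$. Fixing $\varepsilon>0$ with $\tau(T)+\varepsilon<\alpha$ and using that $\card(\Delta'_{2^j}(T))\le 2^{j(\tau(T)+\varepsilon)}$ for all large $j$ (this is exactly the meaning of the $\limsup$ in \eqref{eq:tau}), we get $(\lam\ttimes\lam)(V_j(C))\le 2^{\alpha+1}C\cdot 2^{-j(\alpha-\tau(T)-\varepsilon)}$, so $\sum_j(\lam\ttimes\lam)(V_j(C))<\infty$. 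By Borel--Cantelli, for $\lam\ttimes\lam$-a.a. $(x,y)$ there is $J$ with $(x,y)\notin V_j(C)$ for all $j>J$; since any integer $n>2^J$ lies in some block with $j>J$, this forces $n^\alpha d(T^nx,T^ny)\ge C$ for all $n>2^J$, i.e. $\psi_\alpha(x,y)\ge C$. Taking the union of the null exceptional sets over $C\in\N$ gives $\psi_\alpha(x,y)=\infty$ a.e., i.e. $\psi^*_\alpha(\lam,\lam)=\delta_\infty$, for every $\alpha>\tau(T)$. For the complementary bound $C_\psi(T)\le 1$ one runs Borel--Cantelli directly on the sets $B_n(C)$ themselves using the cruder estimate $(\lam\ttimes\lam)(B_n(C))\le 2C/n^\alpha$ (valid since $T^n$ preserves $\lam$), which is summable for $\alpha>1$. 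I expect the only real subtlety to be the dyadic-blocking step: one must bound an entire dyadic range of small-distance sets by a single use of the counting bound for $\Delta'_{2^j}(T)$ rather than summing the per-$n$ estimates — this is precisely what converts the naive exponent $\tau(T)+1$ into the sharp $\tau(T)$.
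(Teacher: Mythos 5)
Your proposal is correct and follows essentially the same route as the paper: the paper proves the general Proposition \ref{thm:psi} by fixing $x_0$, bounding $Y_k=\{y:\|T^ky-T^kx_0\|\le t/s_k\}$ via the inclusion $Y_k\subset x_0\oplus B(\Delta'(T^k),t/s_k)$, grouping into the dyadic blocks $Z_n=\bigcup_{k=n}^{2n}Y_k\subset x_0+B(\Delta'_{2n}(T),t/s_n)$, applying Borel--Cantelli along $n=2^k$, and finishing with Fubini, with the case $\alpha>1$ handled separately by the crude measure-preservation bound exactly as you do. Your only deviations are cosmetic (working on $\ui^2$ directly rather than fixing $x_0$ and invoking Fubini at the end, and not isolating the general scale-sequence proposition), so there is nothing substantive to change.
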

We will see that  certain assumption on the rate the orbits of an IET\, $T$  become 
uniformly  distributed  implies  that  $T$  is $\tau$-deterministic  
(see Theorem  \ref{thm:om2tau} below).

Theorem \ref{thm:psialp}  is a special case of the following proposition.                   
\begin{prop}\label{thm:psi}  
Let  $T$  be an IET.  Let  $\bfs=\{s_n\}_1^\infty$  be a scale sequence and let 
$\psi\colon \ui\times\ui\to[0,\infty]$
be an associated proximality gauge:  
$\psi(x,y)=\liminf_{n\to \infty}\limits\, s_n\,|T^n(x)-T^n(y)|$.
Set\,  $v_k=|\Delta'_{2k}(T)|/s_k$\,  and  assume that\,  
$\sum_{k\geq1}\limits  v_{2^k}<\infty$.
Then  $\psi(\lam,\lam)=\delta_{\infty}$, i.\,e. $\psi(x,y)=\infty \pmod{\lam\ttimes\lam}$.
\end{prop}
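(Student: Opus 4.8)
The plan is a Borel--Cantelli argument run over \emph{dyadic blocks of times}, using that the iterates $T^n$ are piecewise translations to beat the trivial bound. Fix $c>0$ and set $B_n(c)=\{(x,y)\in\ui^2\mid s_n\,\fracpart{T^nx-T^ny}<c\}$. It suffices to prove that for each fixed $c>0$ the set $N_c=\limsup_n B_n(c)$ (pairs lying in $B_n(c)$ for infinitely many $n$) is $\lam\ttimes\lam$-null: then $\bigcap_{c\in\N}(\ui^2\setminus N_c)$ has full measure, and on it $\liminf_n s_n\,\fracpart{T^nx-T^ny}=\infty$, hence also $\liminf_n s_n\,|T^nx-T^ny|=\infty$ since $|T^nx-T^ny|\ge\fracpart{T^nx-T^ny}$; this is exactly $\psi(x,y)=\infty\pmod{\lam\ttimes\lam}$.

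The structural input is that $T^n$ is an IET, i.e.\ a piecewise translation: $T^n(z)\ominus z\in\Delta(T^n)$ for every $z$, so for all $x,y\in\ui$
\[
T^n(x)\ominus T^n(y)=(x\ominus y)\oplus w,\qquad w=(T^nx\ominus x)\ominus(T^ny\ominus y)\in\Delta'(T^n).
\]
Consequently $B_n(c)$ lies in the union, over $w\in\Delta'(T^n)$, of the diagonal strips $S_w(r)=\{(x,y)\mid\fracpart{(x\ominus y)\oplus w}<r\}$ with $r=c/s_n$, and $(\lam\ttimes\lam)(S_w(r))=\min(1,2r)$ for every $w$ (for fixed $y$ the $x$-slice is an arc of length $\min(1,2r)$; integrate in $y$). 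Running Borel--Cantelli directly on $\sum_n(\lam\ttimes\lam)(B_n(c))\le 2c\sum_n\card(\Delta'(T^n))/s_n$ would force $s_n$ to grow faster than $n^3$, far beyond what is assumed; \textbf{removing this loss is the crux}. It is achieved by blocking: for $n$ in the dyadic block $[2^k,2^{k+1})$ every relevant $w$ lies in the \emph{single} set $\Delta'_{2^{k+1}}(T)=\Delta'_{2\cdot 2^k}(T)$, and $c/s_n\le c/\sigma_k$ with $\sigma_k=\min_{2^k\le n<2^{k+1}}s_n$. Hence $E_k:=\bigcup_{2^k\le n<2^{k+1}}B_n(c)$ is covered by the strips $S_w(c/\sigma_k)$, $w\in\Delta'_{2^{k+1}}(T)$, so
\[
(\lam\ttimes\lam)(E_k)\ \le\ 2c\,\frac{\card(\Delta'_{2^{k+1}}(T))}{\sigma_k}\ =\ 2c\,v_{2^k}
\]
when $\bfs$ is non-decreasing (so that $\sigma_k=s_{2^k}$); the ``$2k$'' in $v_k=\card(\Delta'_{2k}(T))/s_k$ is precisely what makes $\Delta'_{2\cdot 2^k}(T)$ cover the block $[2^k,2^{k+1})$.

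Then $\sum_{k\ge1}(\lam\ttimes\lam)(E_k)\le 2c\sum_{k\ge1}v_{2^k}<\infty$ by hypothesis, so by the Borel--Cantelli lemma $\lam\ttimes\lam$-a.e.\ pair lies in only finitely many $E_k$; since each $n\ge2$ belongs to exactly one block $[2^k,2^{k+1})$ with $k\ge1$ (and the lone set $B_1(c)$ cannot affect a $\limsup$), such a pair lies in only finitely many $B_n(c)$, i.e.\ $(x,y)\notin N_c$. As $c>0$ was arbitrary, $N_c$ is null for every $c$, and the proposition follows. The only genuine difficulty I foresee is the dyadic-block device itself — recognizing that on the torus $B_n(c)$ is a union of translated diagonal strips and that across a whole block these strips are governed by the single finite set $\Delta'_{2\cdot 2^k}(T)$ rather than by $\sum_n\card(\Delta'(T^n))$; the translation identity, the measure of a strip, the Borel--Cantelli bookkeeping, and the reduction over $c$ are all routine. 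One caveat to flag in the write-up: the inequality $c/s_n\le c/\sigma_k=c/s_{2^k}$ uses that $\bfs$ is non-decreasing, which holds for the power sequences $\bfs_\alp$ appearing in Theorems~\ref{thm:3psia} and~\ref{thm:psialp}; for a general scale sequence one keeps $\sigma_k$ and needs $\sum_k\card(\Delta'_{2^{k+1}}(T))/\sigma_k<\infty$, so either monotonicity is to be added as a hypothesis or this stronger summability invoked.
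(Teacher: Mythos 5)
Your proof is correct and follows essentially the same route as the paper's: the same translation identity placing $T^nx\ominus T^ny$ in $(x\ominus y)\oplus\Delta'(T^n)$, the same dyadic blocking so that a whole block of times is controlled by the single set $\Delta'_{2\cdot 2^k}(T)$, and the same Borel--Cantelli summation of $v_{2^k}$. The only cosmetic difference is that you work directly with diagonal strips in $\ui^2$ while the paper fixes $x_0$, bounds the $y$-fibre, and invokes Fubini at the end; your caveat about monotonicity of $\bfs$ is apt, since the paper's proof also silently assumes $\{s_n\}$ is non-decreasing.
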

\begin{proof}[{\bf Proof of Proposition \ref{thm:psi}}]
Fix arbitrary  $x_0\in\ui$  and $t\in\R$, $t>0$.  
For  $k,n\geq1$,  denote
\[
Y_k=\big\{y\in\ui\,\big|\,\big. \|T^ky-T^kx_0\|\leq t/s_k\big\};   \qquad  
Z_n=\bigcup_{k=n}^{2n}\limits Y_k.
\]
Since \
$
T^ky\ominus T^kx_0=\big((T^ky\ominus y)\ominus(T^kx_0\ominus x_0)\big)
\oplus(y\ominus x_0)\subset (y\ominus x_0)\oplus\Delta'_k(T),
$
it follows that                                                                               \mbc{9.22}
\begin{equation}\label{yksub}
Y_k\subset x_0\oplus B(\Delta'(T^k), t/s_k)
\end{equation}
where\,
$B(\Delta,\eps)=\{x\in\ui\mid \dist(x,\Delta)<\eps\}$\,
denotes the $\eps$-neighborhood of a subset  $\Delta\subset\ui$,   
with the standard convention:   
$\dist(x,\Delta)\bydef  \inf_{z\in\Delta}\limits\|x-z\|$.

Since  $\{s_n\}$  is non-decreasing,  the following inclusions hold                          \mbc{9.23}
\begin{align*}
Z_n=\bigcup_{k=n}^{2n} Y_k&\subset 
x_0+\bigcup_{k=n}^{2n} B(\Delta'(T^k),t/s_k)\subset\\
&\subset x_0+B\Big(\bigcup_{k=n}^{2n} \Delta'(T^k),t/s_n\Big)
\subset  x_0+B(\Delta'_{2n}(T),t/s_n).
\end{align*}
Therefore\,   $\lam(Z_n)\leq \card(\Delta'_{2n}(T))\,2t/s_n\leq 2t\,v_n$.

By the Borel-Cantelli lemma,   
$\sum_{k\geq1}\limits  v_{2^k}<\infty\,
\implies
\lam(\limsup_{k\to\infty}\limits Z_{2^k})=0$.

It follows that   
$
\lam(\limsup_{n\to\infty}\limits Y_n)=0
$
(because  
$\limsup_{n\to\infty}\limits Y_n=\limsup_{n\to\infty}\limits Z_{n}$), 
thus\,  $\lam\left(\{y\in\ui\mid \psi(x_0,y)<t\}\right)=0$.                                 \mbc{9.24}

Since  $s>0$  is arbitrary,  we get\,  $\lam(\{y\in\ui\mid \psi(x_0,y)=\infty\})=1$.
The claim of the proposition follows from Fubini's Theorem since 
$x_0\in\ui$  is arbitrary.
\end{proof}
\begin{proof}[{\bf Proof of Theorem \ref{thm:psialp}}]
Assuming that  $\alpha>\tau$,  set  $\beta=(\alpha+\tau)/2>\tau,
\; \gamma=(\alpha-\tau)/2>0$.  Then, with notation as in 
Proposition  \ref{thm:psi}   with  $s_n=n^\alpha$,  we obtain
$
v_n=|\Delta'_{2n}(T)|/s_n\leq |\Delta'_{2n}(T)|/n^\alpha\leq  
n^\beta/n^\alpha=n^{-\gamma},
$
for all large  $n$.    It follows that  $\sum_{k\geq1}\limits v_{2^k}<\infty$
because  $v_{2^k}\leq 2^{-k\gamma}$  for large  $k$.

It remains to consider the case $\alpha>1$. Then, with  $s_n=n^\alp$ and notation 
as in the proof of  Proposition \ref{thm:psi}, we have  $\lam(Y_n)\leq tn^{-\alp}$.
By Borel-Cantelli lemma,  $\lam(\limsup_{n\to\infty}\limits Y_n)=0$  since  
$\sum_n\limits  tn^{-\alp}<\infty$.                                                          \mbc{9.25}

The completion of the proof is similar to the one of Proposition \ref{thm:psi}.  
\end{proof}
\vsp2

Theorem \ref{thm:psialp}  provides a motivation for finding estimates
for  $\tau$-entropies  $\tau(T)$,  for  various  IETs  $T$.  
We will show that  such estimates are possible in terms 
of the rate the orbits  of $T$  become uniformly distributed
(see Theorem \ref{thm:om2tau}).

\begin{def}\label{def:wdis}
Let  $T\colon\ui\to\ui$  be a map and $n\in\N$.  Set
\[
D_n(T)=\sup_{\substack{x\in\ui\\0<a<b<1}}\limits \Big| \frac1n\sum_{k=0}^{n-1} 
\theta_{a,b}(T^k(x))-(b-a)\Big|
\]
where\,  $\theta_{a,b}\colon\ui\to\{0,1\}$  stands for the characteristic function 
of the interval  $(a,b)\in\ui$.                                                            \mbc{9.26}  

By an\, $\omega$-discrepancy of\, $T$  we mean the following constant
\[
\omega(T)=\limsup_{n\to\infty}\limits\frac{\log\, (n\,D_n(T))}{\log n}=
1+\limsup_{n\to\infty}\frac{\log D_n(T)}{\log n}\in [0,1].
\]
\end{def}

Thus, for any\, $\eps>0$,  the inequality  $D_n(T)<n^{\omega(T)-1+\eps}$  
holds  for all\,  $n\in\N$\,  large enough.  

Note that an IET\, $T$  with  $\omega(T)<1$  must be minimal and uniquely 
ergodic  (because every orbit is uniformly distributed in~$\ui$).

The following lemma is an immediate consequence of the definition $D_n$.                 \mbc{9.27}  
\begin{lem}
Let  $T\colon\ui\to\ui$  be a map.  Then the following inequality holds for 
all\, $x,y,a,b\in\ui$  such that  $a<b$  and all\,  $n\in\N${\rm :}
\[
\Big| \sum_{k=0}^{n-1} \theta_{a,b}(T^k(x))-\sum_{k=0}^{n-1} 
\theta_{a,b}(T^k(y))\Big|\leq 2n\,D_n(T).
\]
In particular,  for $\eps>0$,
$
\Big| \sum_{k=0}^{n-1} \theta_{a,b}(T^k(x))-\sum_{k=0}^{n-1} 
\theta_{a,b}(T^k(y))\Big|\leq n^{\omega(T)+\eps},
$
for all  $n\in\N$  large enough.
\end{lem}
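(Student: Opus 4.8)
The plan is to deduce the statement directly from the definition of $D_n(T)$ together with a one-line triangle inequality; no further information about the map $T$ is needed.

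First I would fix points $x,y\in\ui$, reals $a,b$ with $a<b$, and $n\in\N$. Applying the defining supremum of $D_n(T)$ to the orbit of $x$ and to the interval $(a,b)$ gives
\[
\Big|\tfrac1n\sum_{k=0}^{n-1}\theta_{a,b}(T^k(x))-(b-a)\Big|\le D_n(T),
\]
and the same bound holds with $x$ replaced by $y$. Multiplying through by $n$ and using the triangle inequality to insert and then cancel the common quantity $n(b-a)$ yields
\[
\Big|\sum_{k=0}^{n-1}\theta_{a,b}(T^k(x))-\sum_{k=0}^{n-1}\theta_{a,b}(T^k(y))\Big|\le nD_n(T)+nD_n(T)=2nD_n(T),
\]
which is the first assertion. (If $a=0$, so that the pair $(a,b)$ is not literally covered by the constraint $0<a<b<1$ in the definition of $D_n$, I would either let $a'\downarrow0$ and pass to the limit, or simply note that moving one endpoint changes each of the two sums by at most $1$; either way this contributes nothing to the estimate, so I would not dwell on it.)

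For the ``in particular'' clause I would unwind the definition $\omega(T)=\limsup_{n\to\infty}\frac{\log(nD_n(T))}{\log n}$. Given $\eps>0$, set $\eps'=\eps/2$; then for all large enough $n$ one has $\frac{\log(nD_n(T))}{\log n}<\omega(T)+\eps'$, i.e. $nD_n(T)<n^{\omega(T)+\eps'}$, and hence $2nD_n(T)<2\,n^{\omega(T)+\eps'}\le n^{\omega(T)+\eps}$ once $n$ is large enough that $2\le n^{\eps'}$. Combining this with the first inequality completes the proof.

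I do not expect any genuine obstacle here: the entire content is that a difference of two discrepancy sums is controlled by twice the uniform discrepancy, which is exactly what $D_n(T)$ measures. The only point deserving a sentence of care is the final step, where the nuisance constant $2$ is absorbed by first shrinking the exponent from $\omega(T)+\eps$ to $\omega(T)+\eps/2$.
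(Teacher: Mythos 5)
Your proof is correct and is exactly the argument the paper has in mind: the paper offers no proof at all, calling the lemma ``an immediate consequence of the definition of $D_n$,'' and the intended content is precisely your two applications of the defining supremum followed by the triangle inequality, with the factor $2$ absorbed by shrinking $\eps$ in the second clause. Your side remarks (the $a=0$ edge case and the $\eps/2$ device) are minor points of care the paper leaves implicit.
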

The notions of  $\omega$-discrepancy $\omega(T)$ and $\tau$-entropy $\tau(T)$ are 
motivated by the following result establishing an upper limit on the proximality
constant of an IET.                                                                       \mbc{9.28\\thm8\\om2tau}
\begin{thm}\label{thm:om2tau}
An  IET\,  $T$  with  $\omega(T)=0$  must be  $\tau$-deterministic,
i.e.\  $\tau(T)=C_\psi(T)=0$.  More generally, if\,  $T$  is an  
$r$-IET  with   $\omega(T)=\omega_0$,  then\,    
$C_\psi(T)\leq\tau(T)\leq (r-1)\,\omega_0$.
\end{thm}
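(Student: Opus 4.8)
The plan is to bound $\tau(T)$ in terms of $\omega(T)$, after which Theorem \ref{thm:psialp} immediately gives $C_\psi(T)\le\min(1,\tau(T))\le\min(1,(r-1)\omega_0)$; since the first assertion ($\omega(T)=0\Rightarrow\tau(T)=0$) is just the case $\omega_0=0$, everything reduces to the single inequality $\tau(T)\le(r-1)\,\omega(T)$. Recall $\Delta(T^k)=\{T^k(x)\ominus x\mid x\in\ui\}$; since $T^k$ is an IET exchanging at most $(r-1)k+1$ intervals, the function $x\mapsto T^k(x)\ominus x$ is a step function taking at most $(r-1)k+1$ values, so $\card(\Delta(T^k))\le (r-1)k+1$, and consequently $\card(\Delta'_n(T))\le\sum_{k=1}^n\card(\Delta'(T^k))\le\sum_{k=1}^n\big((r-1)k+1\big)^2=O(r^2n^3)$. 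That crude bound only gives $\tau(T)\le 3$; the point of the theorem is that the discrepancy hypothesis forces the values in $\Delta(T^k)$ to be far fewer — polynomially many in $k$ with exponent governed by $\omega_0$.

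The key step is the following combinatorial counting argument, carried out for a single $k$. The translation amounts $T^k(x)\ominus x$ are constant on each of the at most $(r-1)k+1$ ``$T^k$-continuity intervals'' $I^{(k)}_j$, so $\card(\Delta(T^k))\le(r-1)k+1$ is automatic; what we actually need is a bound on $\card\big(\bigcup_{k\le n}\Delta'(T^k)\big)$ that is subcubic. For this I would instead count the distinct values directly: each endpoint of a $T^k$-continuity interval is of the form $T^{-i}(\delta)$ for a discontinuity $\delta$ of $T$ and $0\le i<k$, and the translation amount on the interval just to the right of such an endpoint differs from the one just to the left by a jump of $T$ at $\delta$, i.e. by an element of the finite set $\Delta(T)\ominus\Delta(T)=\Delta'(T)$ which has at most $r^2$ elements (really $\le r$ distinct jumps). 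Telescoping along the natural order of the $\le(r-1)k+1$ intervals, every value of $T^k(x)\ominus x$ is a sum of at most $(r-1)k$ of these $\le r$ jump-vectors, but that again only gives exponentially many combinations. The correct mechanism is: by the discrepancy estimate, for a fixed discontinuity $\delta$ and a fixed level $i$, the point $T^{-i}(\delta)$ lies within $D_i(T)$-comparable distance of a rational-type grid, and the number of \emph{distinct} partial sums that actually occur is controlled by $i\cdot D_i(T)\le i^{\omega_0+\varepsilon}$; summing the per-discontinuity contribution over the $\le r$ discontinuities and over $i<k$ gives $\card(\Delta(T^k))\le (r-1)\,k^{\omega_0+\varepsilon}$ up to constants, hence $\card(\Delta'_{2n}(T))\le\sum_{k\le 2n}\big((r-1)k^{\omega_0+\varepsilon}\big)^2 = O\!\big(n^{2\omega_0+2\varepsilon+1}\big)$, which would only yield $\tau\le 2\omega_0+1$. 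So the argument must be sharpened: one should bound $\card(\Delta'_n(T))$, not term by term, but by observing that $\bigcup_{k\le n}\Delta(T^k)$ itself sits inside a single set of the form $\{$sums of $\le n$ jumps, each jump indexed by one of $\le r(n{-}1)$ points $T^{-i}(\delta)\}$, and then invoking the discrepancy bound to say that among all these the number of \emph{realized} sums is at most $\card\{\,T^{-i}(\delta)\ \text{inside any interval}\,\}$-type quantities, which the Lemma preceding the theorem controls: the Lemma says orbit-counting functions of $x$ and $y$ differ by at most $n^{\omega(T)+\varepsilon}$, and applying it with $x$ ranging over the $\le r$ discontinuities pins down $\card(\Delta'_n(T))\le C_r\,n^{\omega_0+\varepsilon}$, whence $\tau(T)\le\omega_0$ — and a more careful bookkeeping over the $r-1$ ``interior'' discontinuities of a minimal $r$-IET produces the stated factor $(r-1)$.

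The main obstacle, and the step I would spend the most care on, is precisely this last reduction: turning the $\omega$-discrepancy of $T$ into a polynomial bound on $\card(\Delta'_n(T))$ with the \emph{right} exponent $(r-1)\omega_0$ rather than a lossy multiple of it. Concretely, one must show that the set of translation amounts $\{T^k(x)\ominus x : x\in\ui,\ 1\le k\le n\}$ has cardinality $O(n^{(r-1)\omega_0+\varepsilon})$; the natural approach is to fix $x_0$ in each $T$-continuity interval, write $T^k(x)\ominus x$ for nearby $x$ as $T^k(x_0)\ominus x_0$ plus a correction recording how many of the $k$ iterates of $x$ and $x_0$ have separated (i.e. straddled a discontinuity), and then use the Lemma before the theorem — applied to the indicator of each of the $r-1$ interior discontinuity-intervals — to bound the number of possible correction vectors by $(n\,D_n(T))^{r-1}\le n^{(r-1)(\omega_0+\varepsilon)}$. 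Once $\card(\Delta'_n(T))\le C\,n^{(r-1)\omega_0+\varepsilon}$ is established, taking $\log$, dividing by $\log n$ and letting $n\to\infty$ and then $\varepsilon\to 0$ gives $\tau(T)\le(r-1)\omega_0$; combined with $C_\psi(T)\le\min(1,\tau(T))$ from Theorem \ref{thm:psialp} this completes the proof, the case $\omega_0=0$ giving $\tau(T)=C_\psi(T)=0$ as a special instance.
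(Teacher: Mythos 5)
Your final plan is essentially the paper's proof: write $T^k(x)\ominus x=\sum_{j=1}^r h_j H_{j,k}(x)$ with $H_{j,k}(x)$ the number of visits of $x$ to the $j$-th exchanged interval, note that differences of these visit-count vectors have every coordinate bounded by $2kD_k(T)<2n^{\omega_0+\eps}$ and sum to zero, so $\Delta'_n(T)$ lies in a set of at most $(5n^{\omega_0+\eps})^{r-1}$ lattice combinations of the $h_j$, giving $\tau(T)\le(r-1)\omega_0$ and then $C_\psi(T)\le\min(1,\tau(T))$ by Theorem \ref{thm:psialp}. The one intermediate claim you should discard is that this yields $\card(\Delta'_n(T))\le C_r\,n^{\omega_0+\eps}$ and hence $\tau(T)\le\omega_0$ with $r-1$ entering as a multiplicative constant: the $r-1$ must sit in the exponent (it is the number of free integer coordinates once $\sum_j H_{j,k}=k$ is imposed), exactly as the count $(n\,D_n(T))^{r-1}$ in your last paragraph has it.
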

\begin{proof}
In view of Theorem \ref{thm:psialp}, all we need is to validate the inequality
$\tau(T)\leq (r-1)\,\omega_0$.

Let\,  $\ui_1,\ui_2,\ldots,\ui_r$\,  be the subintervals exchanged by  $T$.              \mbc{9.29}
Then  $T(x)-x$  is constant on each  $\ui_k$.
Set  $h_k=T(x)-x$, for any  $x\in\ui_k$, $1\leq k\leq r$. 
It follows that  
\[   T^n(x)-x=\sum_{i=0}^{n-1} (T^{i+1}x-T^ix)=\sum_{k=1}^r  h_k\, H_{k,n}(x),
\]
where  $H_{k,n}(x)= \sum_{j=0}^{n-1}  \theta_{\ui_k}(T^jx)\in\Z$  and  
$\theta_{\ui_k}$ stand for the characteristic functions of intervals $\ui_k$.
 
It is clear that $\sum_{k=1}^r\limits H_{k,n}(x)=n$, for all  $x\in\ui$ and $n\in\N$.
 
Fix  $\eps>0$. Then, for large $n$,  
$|H_{k,n}(x)-n\lam(\ui_k)|\leq D_n(T)<n^{\omega_0+\eps}$,  whence                         \mbc{9.3}
$\Delta'(T^n)\subset  S_n$  where
\[    S_n\bydef \Big\{\sum_{k=1}^r  h_kH_k\,\Big|\,\big.  H_k\in \Z, 
      |H_k|< 2n^{\omega_0+\eps},\, \sum_{k=1}^r H_k=0\Big\}.
\] 
It follows that  $\Delta'_n(T)\subset S_n$ (see equations \eqref{eqs:deltas}),
and hence\,
$   \card(\Delta'_n(T))\leq \card(S_n)< (5n^{\omega_0+\eps})^{r-1}
$
for large  $n$  whence  $\tau(T)\leq (r-1)(\omega_0+\eps)$  (see \eqref{eq:tau}).  
The proof is complete since  $\eps>0$  is arbitrary.
\end{proof}
\begin{definition}
The type  $\nu=\nu(\alp)$  of an irrational number $\alp\in\rmq$\,
is defined as the limit
\[
\nu=\liminf_{n\to\infty}  \left(\frac{-\log\fracpart{n\alp}}{\log n}\right).
\]
\end{definition}

It is well known that  $\nu(\alp)\in[1,+\infty]$.  A number  $\alp\in\rmq$                \mbc{9.31}
is called {\em Liouville}  if   $\nu(\alp)=\infty$;  otherwise it is called 
{\em diophantine}.  Denote  
$\RR_s=\{\alp\in\rmq\mid  \nu(\alp)=s\}$.
The numbers of type 1 (i.e., the numbers in  $\RR_1$)  are also said 
to be of  {\em Roth type}.
It is easy to show that Lebesgue almost all numbers are of Roth type.

For references on the above definitions and related  basic results see  
\cite[Chapter 2, \S3, pp.121--125]{KN}  (in particular, see Lemma 3.1, 
Theorem 3.2  and Example 3.1). 
\begin{prop}
Let  $\alp\in\RR_s$,  for some  $1\leq s<\infty$.  Let  $T_\alp$  denote 
the  $2$-IET  corresponding to the  $\alp$-rotation on $\ui=[0,1)=S^1$
(i.e.,  $T(x)=x+\alp\!\pmod1$, $x\in\ui$.)  Then  $\omega(T)\leq\frac{s-1}s$.
In particular,  if  $\alp$  is of Roth type, then  $\omega(T)=0$.
\end{prop}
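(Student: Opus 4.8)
The plan is to identify $\omega(T_\alp)$ with the logarithmic decay rate of the classical discrepancy of the Kronecker sequence $(k\alp)_{k\ge0}$, and then to feed in the standard relation between that rate and the Diophantine type of $\alp$. First I would make $D_n(T_\alp)$ independent of the base point. Since $T_\alp^k(x)$ is the point $x+k\alp$ of $\ui=\R/\Z$, for an interval $(a,b)\subset(0,1)$ we have $\sum_{k=0}^{n-1}\theta_{a,b}(T_\alp^k x)=\#\{\,0\le k<n:k\alp\in J\,\}$, where $J\subset\ui$ is the arc obtained from $(a,b)$ by translation by $-x$; as $x$ and $(a,b)$ vary, $J$ runs over all proper arcs of $\ui$ of all lengths. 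Hence $D_n(T_\alp)$ is exactly the extreme (arc) discrepancy of $\{k\alp\}_{k=0}^{n-1}$ --- in particular it does not depend on $x$ --- and it differs by at most a factor $2$ from the ordinary star discrepancy $D_n(\alp)$ of $(k\alp)$, so the two have the same logarithmic order.

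Next I would quote the classical continued-fraction bound for $D_N(\alp)$ (Kuipers--Niederreiter; cf.\ \cite{KN}, Ch.~2 \S3, and the inequality $\fracpart{\alp q_n}<1/q_{n+1}$ recalled earlier): with $q_0<q_1<\cdots$ the denominators of the convergents of $\alp$ and $q_l\le N<q_{l+1}$, the Ostrowski expansion $N=\sum_{i=0}^{l}b_iq_i$ (with $0\le b_i\le a_{i+1}$) gives $N\,D_N(\alp)\le C\sum_{i=0}^{l}b_i$ for an absolute constant $C$. The Diophantine input is then: because $\alp$ has type $s$, for every $\eps>0$ we have $\fracpart{q_i\alp}>q_i^{-(s+\eps)}$ for all large $i$, hence $q_{i+1}\le Cq_i^{\,s+\eps}$. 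The key observation is that if $a_{i+1}\approx q_i^{\,\theta}$ then $q_{i+1}\approx q_i^{\,1+\theta}\le N$, forcing $q_i\le N^{1/(1+\theta)}$ and therefore $a_{i+1}\le 2q_i^{\,\theta}=O\!\bigl(N^{\,(s-1+\eps)/(s+\eps)}\bigr)$ (and likewise $b_l\le N/q_l=O\!\bigl(N^{\,(s-1+\eps)/(s+\eps)}\bigr)$ via $q_l\ge(N/C)^{1/(s+\eps)}$); since $\sum_{i\le l}b_i$ has only $O(\log N)$ terms, this yields (using $D_N(T_\alp)\le 2D_N(\alp)$) that $D_N(T_\alp)=O\!\bigl(N^{\,\eps-1/(s+\eps)}\bigr)$ for every $\eps>0$.

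Combining the two steps, $\dfrac{\log\bigl(N D_N(T_\alp)\bigr)}{\log N}=1+\dfrac{\log D_N(T_\alp)}{\log N}\le 1-\dfrac{1}{s+\eps}+\eps+o(1)$ for every $\eps>0$, whence $\omega(T_\alp)=\limsup_{N\to\infty}\dfrac{\log\bigl(ND_N(T_\alp)\bigr)}{\log N}\le 1-\dfrac1s=\dfrac{s-1}{s}$. For $\alp$ of Roth type ($s=1$) this reads $\omega(T_\alp)\le 0$, hence $\omega(T_\alp)=0$ since $\omega\ge0$ always.

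The one genuinely delicate point is obtaining the exponent $-1/s$ rather than the cruder $-1/(s-1)$ that would follow from bounding each $a_{i+1}$ only by $2q_i^{\,s-1+\eps}$ and $q_i$ only by $N$: one must use that a large partial quotient $a_{i+1}$ forces $q_{i+1}$ to be correspondingly large (so $q_i\le N^{1/(1+\theta)}$), and hence that a large partial quotient can only occur at a comparatively small index. This sharpening is exactly what the classical three-distance / Ostrowski discrepancy estimate encodes, so citing it is the efficient route; reproving it from scratch would constitute essentially all the work.
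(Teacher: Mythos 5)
Your argument is correct and follows essentially the same route as the paper: the paper's entire proof is the single observation that, by \cite[Chapter 2, Theorem 3.2]{KN}, an irrational of type $s$ has discrepancy $D_n(T)<n^{\eps-1/s}$ for every $\eps>0$ and all large $n$, after which $\omega(T)\leq\frac{s-1}{s}$ is immediate from the definition of $\omega$. The only difference is packaging: where the paper cites that discrepancy estimate as a black box, you unpack it (the reduction of $D_n(T_\alp)$ to the classical arc discrepancy of $(k\alp)$, the Ostrowski bound $N\,D_N\leq C\sum_i b_i$, and the key point that a large partial quotient can only occur at a comparatively small index, yielding the exponent $-1/s$ rather than $-1/(s-1)$), which is precisely the content of the cited theorem.
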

\begin{proof}
By  \cite[Chapter 2, Theorem 3.2]{KN}, for any  $\eps>0$, 
the inequality\, $D_n(T)<n^{\eps-1/s}$\,  holds  for large  $n$. 
The proposition follows from the definition of \ $\omega(T)$.
\end{proof}
\begin{proof}[Proof of Theorem \ref{thm:3psia}] This follows by direct modification 
of Theorem \ref{thm:om2tau} and the previous proposition. Almost every 3-IET is the 
induced map of a rotation by a Roth type $\alpha$ (i.\,e. $\alpha \in \RR_1$).
Fix $T$ to be a 3-IET given by the induced map of $R_{\alpha}$ on $[0,b)$ for 
$\alpha \in \RR_1$.  Thus $T(x)=(x \oplus \alpha)/b$\, if\, $x \in [0,b)$\, and\,
$(x \oplus 2\alpha)/b$\, otherwise. Therefore the growth of $\Delta'_n(T)$ is 
controlled by 
$\underset{x,y \in \ui}{\max} \underset{k=0}{\overset{2n}{\sum}} \theta_{b,1}R_{\alpha}(x)-
   \underset{k=0}{\overset{2n}{\sum}} \theta_{b,1}R_{\alpha}(y)$. 
Theorem \ref{thm:3psia} follows.
\end{proof}
\section{\bf \large No 3-IET is Topologically Mixing}
It is worth mentioning that at certain times the discrepancy of rotations is at most 4 valued \cite{kesten}. This provides the following theorem:
\begin{thm} \label{thm:3nomix}
No 3 IET is Topologically mixing.
\end{thm}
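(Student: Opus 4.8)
The plan is to combine the classical structure of $3$-IETs with a uniform bound on the discrepancy of rotations at continued-fraction times. Recall (\cite{IET}) that every minimal $3$-IET on $\ui=[0,1)$ with permutation $(3\,2\,1)$ is the first-return map $T$ of an irrational rotation $R(x)=x+\gamma \pmod L$ of the circle $\mathbb{T}_L=\R/L\Z$ (with $L>1$) to the subinterval $\ui\subset\mathbb{T}_L$, and the associated first-return time takes at most three values, all bounded by a constant $r_{\max}=r_{\max}(T)$. First I would reduce to the minimal case: if $T$ is not minimal, Keane's structure theorem decomposes $\ui$ into finitely many minimal and periodic components, and since two disjoint minimal components would require at least $2+2>3$ continuity intervals, a non-minimal $3$-IET has a nonempty periodic component, i.e.\ a subinterval $P$ with $T^p|_P=\mathrm{id}$ for some $p\ge1$; choosing an open interval $U\subset P$ with $|U|<1/(p+1)$ and an open interval $V$ disjoint from $\bigcup_{j=0}^{p-1}T^j(U)$, one gets $T^n(U)\cap V=\emptyset$ for every $n$, so $T$ is not topologically mixing. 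Hence assume $T$ minimal and realized as above.

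For $x\in\ui$ let $t_1(x)<t_2(x)<\cdots$ be the successive return times of the $R$-orbit of $x$ to $\ui$, so that $T^m(x)=R^{t_m(x)}(x)=x+t_m(x)\gamma\pmod L$ and $0<t_{m+1}-t_m\le r_{\max}$. Put $\theta=\gamma/L \bmod 1$ and let $q_k$ be the denominators of its convergents. For $M_N(x):=\#\{1\le j\le N: R^jx\in\ui\}$ one has $M_N(x)=\sum_{j=1}^N\chi_{[0,1/L)}(\{x/L+j\theta\})$, a Birkhoff sum for the rotation by $\theta$; by the three-gap theorem (equivalently \cite{kesten}: at the times $q_k$ the counting function of a rotation on any fixed interval takes boundedly many values) there is a constant $C_0$ with $|M_{q_k}(x)-q_k/L|\le C_0$ for all $x\in\ui$ and all $k$. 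Set $n_k:=\min_{x\in\ui}M_{q_k}(x)$; then $n_k\ge q_k/L-C_0\to\infty$ and $M_{q_k}(x)-n_k\le 2C_0$ for every $x$, so the $n_k$-th return of any $x$ occurs at a time $t_{n_k}(x)$ with $q_k-D\le t_{n_k}(x)\le q_k$, where $D:=(2C_0+1)r_{\max}$ does not depend on $x$. Equivalently, for every $k$ and every $x\in\ui$,
\[
T^{n_k}(x)=x+t_{n_k}(x)\gamma\pmod L,\qquad t_{n_k}(x)\in\{q_k-D,\,q_k-D+1,\,\ldots,\,q_k\}.
\]

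To finish, fix a nonempty open interval $U\subset(0,1)$ with $(D+1)|U|<\tfrac14$, and set $W:=\bigcup_{c=0}^{D}R^{-c}(\overline U)\subset\mathbb{T}_L$, a closed set of total length $<\tfrac14$. Then $(0,1)\setminus W$ is nonempty and open, so we may choose a nonempty open interval $V$ with $\overline V\subset(0,1)\setminus W$ and $\delta:=\dist_{\mathbb{T}_L}(\overline V,W)>0$. The display above yields $T^{n_k}(U)\subset\bigcup_{c=0}^{D}R^{q_k-c}(U)=R^{q_k}\big(\bigcup_{c=0}^{D}R^{-c}(U)\big)\subset R^{q_k}(W)$. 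Since $\fracpart{q_k\theta}\to0$, the rotation $R^{q_k}$ displaces every point of $\mathbb{T}_L$ by an amount tending to $0$, so for all large $k$ the set $R^{q_k}(W)$ lies in the $\tfrac{\delta}{2}$-neighborhood of $W$, hence is disjoint from $V$. Thus $T^{n_k}(U)\cap V=\emptyset$ for all large $k$ while $n_k\to\infty$, which contradicts topological mixing and proves the theorem. The main obstacle is the second paragraph: passing from $R$-time to $T$-time through the (bounded) fluctuation of return times, and making the appeal to \cite{kesten}/the three-gap theorem precise enough to produce the constant $C_0$ uniform in the base point. The reduction to the minimal case and the final packing argument are routine.
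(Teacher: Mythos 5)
Your proof is correct and follows essentially the same route as the paper's: both realize a minimal $3$-IET as the first-return map of an irrational rotation, invoke Kesten's theorem to show that at suitable $T$-times (corresponding to the rotation times $q_k$) the displacement $T^{n_k}(x)-x$ takes only boundedly many values uniformly in $x$, and conclude that the image of a small interval misses some fixed nonempty open set along a subsequence, contradicting topological mixing. The only cosmetic difference is in the last step: the paper pigeonholes over twenty intervals of length $1/20$ to find an interval missed infinitely often, whereas you use $\fracpart{q_k\theta}\to 0$ to pin $T^{n_k}(U)$ near the fixed small set $W$ and choose $V$ once and for all.
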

\begin{definition} $T:[0,1) \to [0,1)$ is called \emph{topologically mixing} if for any (nonempty) open sets $U$ and $ V$ there exists $N$ such that $T^n(U) \cap V \neq \emptyset$ for all $n>N$.
\end{definition}
It is sufficient to consider 3-IETs formed by inducing on irrational (non-periodic) rotations of the circle.
\begin{lem}\label{spec times} Let $R_{\alpha}$ be rotation of the circle($[0,1)$) by $\alpha$. Then for any interval $J \subset [0,1)$ the
cardinality of the set $\{x, x+ \alpha,...,(q_n-1) \alpha +x \} \cap J|$ takes at most 4 consecutive values as $x$ varies.
\end{lem}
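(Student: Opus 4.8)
The plan is to rewrite the statement in terms of a single counting function and then apply a classical quantitative equidistribution estimate, dealing with one degenerate case by a small perturbation of $J$. Here $q_n$ denotes the $n$-th denominator of the continued fraction of the irrational $\alpha$. Fix the interval $J\subset[0,1)$; without loss of generality write $J=[a,b)$ and set $\beta=b-a$. For $x\in[0,1)$ let
\[
 f(x)=\#\bigl(\{x,x+\alpha,\dots,(q_n-1)\alpha+x\}\cap J\bigr)=\sum_{j=0}^{q_n-1}\mathbf 1_{J}\!\bigl(x+j\alpha \bmod 1\bigr),
\]
which is the quantity in the lemma; it is integer valued and, by rotation invariance of Lebesgue measure, $\int_0^1 f(x)\,dx=q_n\beta$. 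The one nontrivial input I would use is the Denjoy--Koksma inequality (this is precisely the ``discrepancy of rotations at times $q_n$'' phenomenon recalled just before the lemma): since $q_n$ is a convergent denominator of $\alpha$, for every function $\varphi$ of bounded variation on the circle one has $\bigl|\sum_{j=0}^{q_n-1}\varphi(x+j\alpha)-q_n\int_0^1\varphi\bigr|\le\operatorname{Var}(\varphi)$. Applying this to $\varphi=\mathbf 1_{J}$, whose total variation around the circle is $2$ (a jump of size $1$ at each endpoint of $J$), gives
\[
 \bigl|f(x)-q_n\beta\bigr|\le 2\qquad\text{for all }x\in[0,1).
\]

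Suppose first that $q_n\beta\notin\mathbb Z$. Then the closed interval $[q_n\beta-2,\,q_n\beta+2]$, of length $4$, contains exactly four integers, and they are consecutive; hence $f$ takes at most four consecutive values and the lemma holds in this case.

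The remaining, borderline case $q_n\beta=m\in\mathbb Z$ — in which the inequality above only excludes values outside $\{m-2,\dots,m+2\}$ — is the one I expect to require real care, and it is the main obstacle. I would remove it by perturbing the length of $J$ so as to fall back into the non-integral case without changing $f$ at the point under consideration. Fix $x$ and pick $\varepsilon>0$ so small that none of the finitely many points $x+j\alpha$ ($0\le j<q_n$) lies in $[b-\varepsilon,b)$ or in $[a-\varepsilon,a)$. Then the count associated with $[a,b-\varepsilon)$ equals $f(x)$, while its density $q_n(\beta-\varepsilon)=m-q_n\varepsilon$ is non-integral, so the previous paragraph yields $f(x)\le m+1$; symmetrically, using $[a-\varepsilon,b)$, whose density $m+q_n\varepsilon$ is also non-integral, yields $f(x)\ge m-1$. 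Thus in this case $f$ takes at most the three consecutive values $m-1,m,m+1$. In all cases $f$ takes at most four consecutive values, which is the assertion of the lemma.

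Two remarks on where the care goes. First, the bookkeeping around the endpoints of $J$ — whether $J$ is taken open, half-open, or closed — changes $f$ only for the finitely many $x$ with some $x+j\alpha\in\{a,b\}$, and is handled by running the perturbation on the side that avoids those orbit points; nailing down $\operatorname{Var}(\mathbf 1_J)=2$ and the precise form of Denjoy--Koksma invoked is the other routine-but-necessary point. Second, there is a fully elementary alternative to Denjoy--Koksma: by the three-gap theorem the $q_n$ points $\{j\alpha\bmod1:0\le j<q_n\}$ cut the circle into arcs of only two distinct lengths, and since $f(x)$ is the number of these points in a sliding arc of fixed length $\beta$, a short combinatorial count bounds its range by $4$; I would fall back on this route only if one prefers to avoid citing Denjoy--Koksma, as the gap bookkeeping there is of comparable length.
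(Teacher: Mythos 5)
Your proof is correct, but it reaches the bound through a different classical input than the paper does. The paper deduces the lemma from Kesten's theorem (Theorem 1 of \cite{kesten}): the points $k\alpha$, $1\le k\le q_n$, hit each grid interval $\bigl(\tfrac{r}{q_n},\tfrac{r+1}{q_n}\bigr)$ exactly once, so after translating by $x$ one sandwiches $J$ between two unions of consecutive grid intervals and reads off directly that the count lies among the four consecutive integers $\lfloor q_n|J|\rfloor-1,\dots,\lfloor q_n|J|\rfloor+2$, with no case analysis. You instead invoke the Denjoy--Koksma inequality for $\mathbf 1_J$ (variation $2$), which confines the count to a closed window of length $4$ centered at $q_n|J|$; that window contains five integers precisely when $q_n|J|\in\mathbb Z$, and you correctly single this out as the only genuine obstacle and dispose of it by the one-sided perturbations of $J$, which in fact leave only three admissible values in that case. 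The two inputs are essentially equivalent in strength here (Kesten's statement implies the Denjoy--Koksma bound for indicators of arcs), so the trade-off is: the paper's route gets the four consecutive values in one stroke but leans on the more structural equidistribution statement, while yours uses the more standard inequality at the cost of the borderline case; your suggested fallback via the three-gap theorem would be a third, fully elementary route. One bookkeeping point you already flag but should keep in mind: when $a-\varepsilon<0$ the perturbed arc wraps around the circle, which is harmless since the variation of the indicator of any arc of the circle is still $2$.
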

Call the largest of these $b_n$
This Lemma is a consequence of Theorem 1 of \cite{kesten} which states:
\begin{thm}  \cite{kesten} Each interval $(\frac r {q_m}, \frac {r+1} {q_m}); r=0,1,...q_m -1$ contains exactly one point $k\alpha$ with $1 \leq k \leq q_m$.
\end{thm}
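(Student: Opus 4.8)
The plan is to give a direct continued-fraction proof, recasting the statement as a bijection. A point $k\alpha \bmod 1$ lies in the open interval $\bigl(\tfrac{r}{q_m},\tfrac{r+1}{q_m}\bigr)$ exactly when $\lfloor q_m\,(k\alpha\bmod 1)\rfloor=r$. Since $\alpha$ is irrational, $k\alpha\bmod 1$ is irrational for every $k\geq 1$ and so never coincides with a rational endpoint $r/q_m$; hence the $q_m$ open intervals together account for all the points $k\alpha\bmod 1$, $1\le k\le q_m$, and it suffices to prove that the index map $k\mapsto r(k):=\lfloor q_m\,(k\alpha\bmod 1)\rfloor$ is a bijection from $\{1,\dots,q_m\}$ onto $\{0,1,\dots,q_m-1\}$.

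First I would reduce $r(k)$ modulo $q_m$. Writing $q_m\,(k\alpha\bmod 1)=k q_m\alpha-q_m\lfloor k\alpha\rfloor$ and using that $q_m\lfloor k\alpha\rfloor\in\Z$ gives $r(k)=\lfloor k q_m\alpha\rfloor-q_m\lfloor k\alpha\rfloor\equiv\lfloor k q_m\alpha\rfloor\pmod{q_m}$. Next, setting $\beta=q_m\alpha-p_m$ (so that $\fracpart{q_m\alpha}=|\beta|$) yields $k q_m\alpha=k p_m+k\beta$ and therefore $\lfloor k q_m\alpha\rfloor=k p_m+\lfloor k\beta\rfloor$, so that $r(k)\equiv k p_m+\lfloor k\beta\rfloor\pmod{q_m}$.

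The decisive step is to show that $\lfloor k\beta\rfloor$ is \emph{constant} over the full range $1\leq k\leq q_m$. By the standard estimate $\fracpart{q_m\alpha}<1/q_{m+1}$ together with $q_{m+1}>q_m$, one has $|k\beta|\leq q_m|\beta|<q_m/q_{m+1}<1$ for all such $k$, and $\beta\neq 0$ because $\alpha$ is irrational. Thus every $k\beta$ with $1\leq k\leq q_m$ lies in $(0,1)$ when $\beta>0$ and in $(-1,0)$ when $\beta<0$, so $\lfloor k\beta\rfloor=c$ with $c\in\{0,-1\}$ independent of $k$. Consequently $r(k)\equiv k p_m+c\pmod{q_m}$.

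To finish, recall that $p_m/q_m$ is a convergent of $\alpha$, so $\gcd(p_m,q_m)=1$ and multiplication by $p_m$ is a bijection of $\Z/q_m\Z$; adding the fixed constant $c$ keeps it a bijection. As $k$ traverses the complete residue system $\{1,\dots,q_m\}$, the residues $r(k)\equiv k p_m+c\pmod{q_m}$ therefore run through $\{0,1,\dots,q_m-1\}$ exactly once, which is the assertion. I do not expect a genuine obstacle here; the only delicate point is the uniform bound $|q_m\beta|<1$ forcing $\lfloor k\beta\rfloor$ to be constant, whose tight case is $k=q_m$, together with the observation that irrationality prevents any $k\alpha\bmod 1$ from landing on a partition endpoint.
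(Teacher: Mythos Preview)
Your argument is correct. The reduction to showing that $k\mapsto \lfloor q_m\{k\alpha\}\rfloor$ is a bijection is clean, and each step checks: the congruence $r(k)\equiv \lfloor kq_m\alpha\rfloor\pmod{q_m}$, the rewriting via $\beta=q_m\alpha-p_m$, the constancy of $\lfloor k\beta\rfloor$ on $1\le k\le q_m$ from $|q_m\beta|<q_m/q_{m+1}<1$, and the final bijectivity from $\gcd(p_m,q_m)=1$ all go through. The endpoint issue is handled by irrationality as you note.

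As for comparison: the paper does not actually prove this statement. It is quoted as Theorem~1 of \cite{kesten} and used as a black box to deduce Lemma~\ref{spec times} (the four-value discrepancy bound). So your write-up supplies what the paper omits, via the standard three-distance/continued-fraction route rather than by appeal to Kesten's original argument. This is a perfectly good self-contained replacement for the citation.
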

To prove Lemma \ref{spec times} observe that  $|\{x, x+ \alpha,...,(q_n-1) \alpha +x \} \cap J| \in [\lfloor \frac{|J|}{q_n} \rfloor -1, \lfloor \frac{|J|}{q_n} \rfloor +2] \cap \mathbb{N}$ 
\begin{lem} Let $T$ be a 3-IET obtained by inducing rotation by $\alpha$ on $[1-t,1)$. Then $T^{q_n -1 -b_n}(x)-x$ takes at most 7 consecutive values for each $n$.
\end{lem}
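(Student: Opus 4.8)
The plan is to transfer the statement into a counting estimate for the rotation $R_\alpha$ and then feed it into Lemma~\ref{spec times}. Realize $T$ as the first return map of $R_\alpha$ to $J:=[1-t,1)$, with first return time $r\colon J\to\N$, so $T(x)=R_\alpha^{r(x)}(x)$. Since the orbit arc $x,R_\alpha x,\dots,R_\alpha^{r(x)-1}x$ meets $J$ only at its initial point, iteration gives, for $x\in J$ and $m\ge0$,
\[
T^m(x)=R_\alpha^{N_m(x)}(x),\qquad N_m(x):=\sum_{i=0}^{m-1}r\bigl(T^i(x)\bigr),
\]
and $N_m(x)$ is precisely the $R_\alpha$-time of the $(m{+}1)$-st visit of the orbit of $x$ to $J$ (counting $x$ itself as the first visit). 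Hence $T^m(x)-x\equiv N_m(x)\,\alpha\pmod1$, and the lemma reduces to the claim that, for the single exponent $m=q_n-1-b_n$, the integer $N_m(x)$ ranges over at most $7$ consecutive values as $x$ runs through $J$.

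Two inputs drive the count. First, by the three-distance theorem (the fact underlying the ``$3$'' in ``$3$-IET'') the return time $r$ is constant on each of at most three subintervals of $J$, so along any orbit each $T$-step advances the $R_\alpha$-clock by one of at most three prescribed amounts. Second, the exponent $q_n-1-b_n$ is calibrated so that $N_m(x)$ stays in a bounded window of $q_n$: writing $c(x):=\#\{0\le k<q_n:R_\alpha^k x\in J\}$, Lemma~\ref{spec times} (applied to $J$, or to its complement) forces $c(x)$, and likewise $q_n-c(x)$, to occupy at most four consecutive values; matching ``number of $J$-visits up to time $N_m(x)$'', which equals $m+1=q_n-b_n$, against ``number of $J$-visits before time $q_n$'', which equals $c(x)$, and using the rigidity estimate $\fracpart{q_n\alpha}<1/q_{n+1}$ (so $R_\alpha^{q_n}$ moves every point by less than $1/q_{n+1}$, while the $\sim1/q_n$-spaced points $R_\alpha^k x$, $0\le k<q_n$, include only boundedly many within $1/q_{n+1}$ of $\partial J$) to handle the few $x$ with $R_\alpha^{q_n}x\notin J$, one gets that $T^{q_n-1-b_n}(x)$ differs from $R_\alpha^{q_n}(x)\approx x$ by at most a bounded number of further $T$-steps. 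Combining the four consecutive values allowed by Lemma~\ref{spec times} with at most one extra return-time ``crossing'' in each direction needed to reconcile the two visit-counts then confines $N_m(x)$ to at most $7$ consecutive values, hence so is $T^{q_n-1-b_n}(x)-x=N_m(x)\alpha\pmod1$.

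The delicate point — and the one I expect to be the main obstacle — is exactly this reconciliation: one must show that a discrepancy of at most $3$ in the $J$-visit count propagates into a discrepancy of at most $6$ in the visit-time $N_m(x)$, despite the return time $r$ being unbounded in general. The mechanism is that over a window of $q_n$ steps the visit pattern of $R_\alpha$ to $J$ is rigid up to the tiny translation $q_n\alpha$, so a ``missing'' or ``extra'' visit can only occur near $\partial J$ and therefore shifts $N_m(x)$ by at most one excursion in each direction; making this quantitative, and in particular pinning the constant at $7$ rather than something larger, is the heart of the argument. Equivalently, one may phrase the same fact by saying that $m=q_n-1-b_n$ is a bounded-discrepancy time for the induced map $T$, inherited from $q_n$ being one for $R_\alpha$.
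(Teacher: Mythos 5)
The paper states this lemma without proof (it is followed immediately by the proof of Theorem~\ref{thm:3nomix}), so there is no argument of record to compare against; judged on its own terms, your proposal sets up the right framework but leaves a genuine gap at the decisive step --- and you correctly identify where. Writing $T^m(x)=R_\alpha^{N_m(x)}(x)$, so that $T^m(x)-x\equiv N_m(x)\alpha\pmod 1$, and reducing to the claim that $N_m(x)$ occupies at most $7$ consecutive integers, is exactly the right move, as is feeding Lemma~\ref{spec times} in as the source of the ``four consecutive values''. But the conversion of a visit-count discrepancy of at most $3$ into a visit-time discrepancy of at most $6$ is not a technicality to be deferred: for a general inducing interval it is false. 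One ``extra'' or ``missing'' return shifts $N_m(x)$ by a full return time of $[1-t,1)$, and if $t$ is small these return times are of order $1/t$, so $N_m(x)$ is only confined to a window of length comparable to three times the maximal return time, not $7$. Your appeal to the three-distance theorem and to $\fracpart{q_n\alpha}<1/q_{n+1}$ does not repair this, because neither bounds the individual return times by an absolute constant.

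The missing ingredient is the specific presentation the paper uses in the proof of Theorem~\ref{thm:3psia}: a $3$-IET is realized as the first-return map to an interval whose \emph{complement} is disjoint from its own $R_\alpha$-image, i.e.\ every return time is $1$ or $2$ and every excursion out of the inducing interval lasts exactly one rotation step. With that normalization the count closes. Applying Lemma~\ref{spec times} to $J=[0,1-t)$, the number of complement-visits among times $0,\dots,q_n-1$ lies in $\{b_n-3,\dots,b_n\}$, hence the number of returns to $[1-t,1)$ at times $1,\dots,q_n-1$ lies in $\{m,\dots,m+3\}$ with $m=q_n-1-b_n$; so $N_m(x)\le q_n-1$, and the block of times strictly between $N_m(x)$ and $q_n$ contains at most $3$ further returns and (since no two consecutive times lie in the complement) at most $4$ complement-visits, whence $q_n-1-N_m(x)\le 7$. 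This pins $N_m(x)$ to at most $8$ consecutive integers (reaching the stated ``$7$'' needs one endpoint shaved off, but any absolute constant suffices for Theorem~\ref{thm:3nomix} after refining the partition of $[0,1)$). Without the return-time-at-most-$2$ structure your argument cannot reach any $n$-independent constant, so the obstacle you flag is real and supplying it is the whole content of the lemma.
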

\begin{proof}[Proof of Theorem {\em \ref{thm:3nomix}}] Divide $[0,1)$ into intervals of length $\frac 1 {20}$. For each such interval, $J$ and each $n$ $T^{q_n -1 -b_n}(J)$ intersects at most fourteen of them. So it misses at least six. Therefore it misses at least one (in fact at least six) infinitely often, violating topological mixing.
\end{proof}
\section{\bf \large Questions}
We conclude by listing some open problems. 

{\bf Problem 1.} For  $r\geq4$  and permutations $\pi\in S_r$,  
    what are possible and what are ``typical"  values for  $C_\psi(T)$,  for  $r$-IETs\, $T$  
    corresponding to a given permutation $\pi$?
    
In fact, it is quite difficult (but possible) to construct an IET  $T$  with  $C_\psi(T)>0$,
even though we believe that for ``most'' $4$-IETs  $C_\psi(T)>0$. 

For ``most''  $3$-IETs  $C_\psi(T)=0$ (Theorem \ref{thm:3psia}),  and we conjecture that 
$C_\psi(T)=0$ for all $3$-IETs.

Note that the same question regarding the constants $C_\phi(T)$ and $C_\rho(T)$ 
is well understood.

{\bf Problem 2.} Is it possible to have a uniquely ergodic IET\, $T$\, with two points 
          $x, y$ such that \ $\psi_1(x,y)= \infty$\,?
          
If  $T$  is $\lam$-ergodic but not uniquely ergodic, then   $\psi_1(x,y)= \infty$\, may
be possible (Proposition \ref{prop:chaikaex}).

{\bf Problem 3.} Does there exists an $1$-collapsing IET  $T$?

$T$ is  $1$-collapsing  if \
$
\phi_1(x,y)=\liminf_{n\to\infty}\limits n|T^nx-y|=0,
$ \
for all $x,y\in\ui$.

\np

\end{document}